\numberwithin{equation}{section}
\newtheorem{thm}{Theorem}[section]
\newtheorem{df}[thm]{Definition}
\newtheorem{lem}[thm]{Lemma}
\theoremstyle{definition}
\newtheorem{rem}[thm]{Remark}
\let\oldproofname=\proofname
\renewcommand{\proofname}{\rm\bf{\oldproofname}}
\newcommand{\N}{\mathbb{N}}
\newcommand{\R}{\mathbb{R}}
\newcommand{\cD}{\mathcal{D}}
\newcommand{\cF}{\mathcal{F}}
\newcommand{\cG}{\mathcal{G}}
\newcommand{\cH}{\mathcal{H}}
\newcommand{\cO}{\mathcal{O}}
\newcommand{\cR}{\mathcal{R}}
\newcommand{\cU}{\mathcal{U}}
\newcommand{\cV}{\mathcal{V}}
\newcommand{\cW}{\mathcal{W}}
\newcommand{\dd}{\,{\rm d}}
\newcommand{\D}{{\rm d}}
\newcommand{\DS}{\displaystyle}
\newcommand{\QED}{\mbox{}\hfill$\Box$}
\renewcommand{\:}{\thinspace :}
\title[Propagation fronts in a simplified model of tumor growth]
{Propagation fronts in a simplified model\\ of tumor growth with degenerate\\
cross-dependent self-diffusivity}
\author[Th. Gallay]{Thierry Gallay}
\address{Institut Fourier, Universit\'e Grenoble Alpes, CNRS, 100 rue des Maths -- 
38610 Gi\`eres, France}
\email{Thierry.Gallay@univ-grenoble-alpes.fr}
\author[C. Mascia]{Corrado Mascia}
\address{Dipartimento di Matematica Guido Castelnuovo, Sapienza, Universit\`a di Roma, 
P.le Aldo Moro 5 -- 00185 Roma, Italia}
\email{corrado.mascia@uniroma1.it}
\keywords{Reaction-diffusion systems, cross-dependent self-diffusivity, 
traveling wave solutions, degenerate diffusion, singular perturbation.}
\subjclass[2010]{35C07 35K57 34D10 92-10}
\begin{document}
\baselineskip15pt

\maketitle
%\date{\today}

\thispagestyle{empty}

\begin{abstract}
Motivated by tumor growth in Cancer Biology, we provide a complete
analysis of existence and non-existence of invasive fronts for the
{\it reduced Gatenby--Gawlinski model}
\begin{equation*}
 	\partial_t U \,=\, U\bigl\{f(U)-dV\bigr\}\,,\qquad
  	\partial_t V \,=\, \partial_x \left\{f(U)\,\partial_x V\right\} + r V f(V)\,,
\end{equation*}
where $f(u)=1-u$ and the parameters $d, r$ are positive. Denoting by $(\cU,\cV)$ the 
traveling wave profile and by $(\cU_\pm,\cV_\pm)$ its asymptotic states at $\pm\infty$,
we investigate existence in the regimes
\begin{equation*}
	\begin{aligned}	
	&d>1:	&\;	& \bigl(\cU_-,\cV_-\bigr)=\bigl(0,1\bigr) 	
        &\textrm{and}\;\;\ \bigl(\cU_+,\cV_+\bigr)=\bigl(1,0\bigr),\\
	&d<1:	&\;	&\bigl(\cU_-,\cV_-\bigr)=\bigl(1-d,1\bigr) 	
        &\textrm{and}\;\;\ \bigl(\cU_+,\cV_+\bigr)=\bigl(1,0\bigr),
	\end{aligned}
\end{equation*}
which are called, respectively, {\it homogeneous invasion} and {\it
heterogeneous invasion}. In both cases, we prove that a propagating
front exists whenever the speed parameter $c$ is strictly positive.
We also derive an accurate approximation of the front profile in 
the singular limit $c \to 0$. 
\end{abstract}

\section{Introduction}\label{sec1}

Biological invasion is one of the basic features of Nature and its potentiality
to modify its structure and its inherent vitality. Sometimes invasion of a
new species can be regarded as a positive event, sometimes as a negative one
depending on the property of the intruder and the invaded, see
\cite{LewiPetrPott16}. Here, motivated by Cancer Biology, we focus on the
appearance of invasion in the form of propagating fronts in {\it tumor growth}.
Precisely, we present a rigorous mathematical analysis of a reaction-diffusion system
composed by two differential equations, for which we prove the existence of 
traveling wave solutions that can be interpreted as {\it invasion fronts} of a 
cancerous tissue into a healthy one.  We urge the reader to pay attention 
to the specific form of the nonlinear diffusion term in our system and 
the consequences it has on the set of admissible propagation speeds. 

\subsection{Genesis of the model}

The original motivation is the analysis of the so-called {\it acid-mediated tumor
  growth}, proposed by Otto Warburg as a mechanism responsible for tumor increase
\cite{WarbWindNege27}.  Precisely, the so-called {\it Warburg effect} refers to the
observation that --even in aerobic conditions-- cancer cells tend to favor metabolism via
glycolysis rather than the more efficient oxidative phosphorylation pathway, usually
preferred by most other cells of the body \cite{AlfaEtAl14}.  A simplified mathematical
description for such a mechanism has been proposed in \cite{GateGawl96}.  After an
appropriate rescaling and using the notation $f(s)=1-s$, the one-dimensional version of
the model reads as
\begin{equation}\label{GGsys}
	\left\{\begin{aligned}
 	 \partial_t U \,&=\, U\bigl\{f(U) - dW\bigr\}\,, \\ 
  	\partial_t V \,&=\, \partial_x \bigl\{f(U)\,\partial_x V\bigr\} + r V f(V) \,, \\
  	\partial_t W \,&=\, a\,\partial_x^2 W + b(V-W)\,,
	\end{aligned}\right.
\end{equation}
where $U = U(x,t)$ represents the (normalized) population of healthy cells, $V=V(x,t)$ is
the (normalized) population of tumor cells, and $W=W(x,t)$ is the concentration of lactic
acid. The reaction-diffusion system \eqref{GGsys} is a sound description of the 
acid-mediated tumor growth mechanism.  In what follows, we refer to \eqref{GGsys} as
the {\bf complete Gatenby--Gawlinski model} to distinguish it from a corresponding reduced
version to be introduced later.

The basics of such a modeling is rather clear.  Firstly, healthy cells --denoted by $U$--
have a certain reproduction level (supposed logistic with rate $1$, for simplicity) and
are deteriorated by the acid, following the standard mass action law with kinetic constant
$d$. Secondly, the tumor cells $V$ have the capability of spreading at a rate that
depends on the quantity of healthy cells $U$, and they also reproduce according to a
logistic law, with a different rate denoted by $r$. A rough justification of the
dependence on $U$ in the coefficient of $\partial_x V$ is that tumor cells --possessing a
high-degree of invasiveness-- can hardly move when the density of healthy cells is high.
Specifically, the coefficient is null (no motion of cancerous tissue) if the healthy cells
are at carrying capacity. Finally the concentration $W$ of lactic acid undergoes 
diffusion at constant rate $a$, and is increased proportionally to the unknown $V$, 
with kinetic constant $b$, until it reaches the saturation level $W = V$. Let us stress 
that the third unknown $W$ has no direct effect on the dynamics of the variable
$V$.  The system is meaningful when the parameters $a,b,d,r$ are all positive.
Modifications of the original model have also been considered by many authors:
among others, we quote here \cite{McGiEtAl14} (generalized Gatenby--Gawlinski
model), \cite{ArauFassSalv18} (linear diffusion in the tumor variable),
\cite{HoldRodr15} (effect of chemotherapy), \cite{MartEtAl10} (stromal
interaction), \cite{StinSuruMera15} (distinction between intracellular and
extracellular proton dynamics). An interesting feature of the Gatenby--Gawlinski
model \eqref{GGsys} is the numerical evidence of existence of invasive
propagation fronts, i.e. special solutions describing the invasion of the cancer 
cells into the healthy tissue. To our knowledge, no rigorous proof of the existence 
of such front is available so far, except in some limiting parameter regimes. 

To decrease the complexity of \eqref{GGsys}, we consider the reduced system
\begin{equation}\label{redGGsys}
	\left\{\begin{aligned}
	\partial_t U \,&=\, U \bigl\{f(U)- dV\bigr\}\,, \\ 
  	\partial_t V \,&=\, \partial_x \bigl\{f(U)\,\partial_x V\bigr\}+ r V f(V)\,,
	\end{aligned}\right.
\end{equation}
obtainable as a formal limit in the regime $a\sim\textrm{const.}$ and $b\to\infty$,
that is replacing the last dynamical equation with the trivial constitutive identity $W=V$ 
(see \cite{FasaHerrRodr09, MascMoscSime21, MoscSime19}). Such a model intends to 
describe the case in which the tumor cells act directly on the healthy tissue with 
no additional specific intermediate (at the level of ODE, an analogous system has been 
discussed in \cite{FassYang17}).  We refer to \eqref{redGGsys} as the {\bf reduced 
Gatenby--Gawlinski model}. It can be regarded as a simplified version of the system 
proposed in \cite{McGiEtAl14}, obtained by rescaling the space and choosing 
\begin{equation}\label{redGGgenGG}
	a_2=d, \quad r_2=r, \quad a_1=d_1=d_2=0, \quad D=1, \quad c=b,
\end{equation}
where the parameters $a_1,a_2, d_1, d_2, r_2$ and $D$ are as in \cite{McGiEtAl14}.
Incidentally, let us observe that there are other possible reductions of the same 
original complete model, which are obtained by considering different parameter 
regimes and could also be worth investigating.

The diffusion coefficient $f(U) = 1-U$ in the $V$-equation of system~\eqref{redGGsys} 
can be seen as an inherent defense process exerted by the healthy tissue in
response to the presence of the tumor. If the healthy cells are at their
carrying capacity, normalized to $1$, no invasion can occur; in contrast, tumor
starts growing whenever the density of healthy cells is lower than $1$, and in
absence of healthy tissue the tumor is free to permeate all the space.
Incidentally, we observe that $f(U)$ takes negative values when $U > 1$, so 
that the diffusion equation for $V$ becomes ill-posed in such a regime. 
In the subsequent traveling wave analysis, we restrict our attention to values 
$U \in (0,1)$, so that $f(U) > 0$. 

\subsection{Scalar reaction-diffusion equations with degenerate diffusion}

Propagation fronts attracted the interest of many researchers because they provide 
the simplest mathematical framework describing the process of biological invasion. 
Rigorous results concerning existence and asymptotic stability of traveling wave 
solutions for scalar equations of the form
\begin{equation}\label{degFKPP}
  \partial_t U \,=\, \partial_x \bigl\{\phi(U)\,\partial_x U\bigr\}+f(U)\,,
\end{equation}
with reaction term $f$ and nonlinear diffusion $\phi$, have been obtained under
various assumptions. When $\phi$ is a strictly positive constant, the diffusion
is linear and equation~\eqref{degFKPP} is semilinear. For $\phi$ dependent on $U$ and
attaining strictly positive values, the diffusion is nonlinear and non-degenerate.  Here,
we are interested in the case in which the function $\phi$ is non-negative and null at
some specific points, usually $U=0$, the archetypal example being the {\it porous medium
  equation} for which $\phi(s)\propto s^p$ for some $p>0$.  Let us stress that considering a
function $\psi$ such that $\psi'=\phi$, equation \eqref{degFKPP} can be rewritten as
\begin{equation*}
	\partial_t U \,=\, \partial_x^2 \psi(U)+f(U)\,,
\end{equation*}
corresponding to a porous medium equation with reaction. Existence of traveling wave 
solutions has been widely explored in such a context, see \cite{Aron80, Engl85, GildKers05, 
MalaMarc03, Newm80, SancMain94b, SancMain97, SancMain95, Sher10} for a single degeneration 
point, and \cite{DrabTaka20, MalaMarc05, MalaMarc03, Mans10b} for multiple degenerations.
Different forms of the multiplier function $\phi$, e.g. depending on $\partial_x U$, 
have also been considered (see \cite{AtkiReutRidl81,BengDepa18}).

In the scalar case, the typical existence statement --valid for linear, nonlinear
non-degenerate or degenerate diffusion-- can be rephrased as follows.  Let the function
$f$ be of logistic type, i.e. it has two zeros (say $0$ and $1$) and is positive in
between. Then the scalar equation \eqref{degFKPP} supports traveling waves
$U(x,t)=\mathcal{U}(x-ct)$ satisfying the asymptotic conditions $\mathcal{U}(-\infty)=1$
and $\mathcal{U}(+\infty)=0$ if and only if $c\geq c_\ast$, for some strictly positive $c_\ast$.
Moreover, the solution to the initial value problem with Heaviside-like data, characterized
by a sharp jump from 0 to 1, converges in an appropriate sense to the traveling wave
connecting the states $0$ and $1$ and moving at critical speed $c_\ast$. 

For traveling waves of linear diffusion equations, stability analysis is a
classical subject dating back to the pioneering papers by Fisher \cite{Fish37}
and Kolmogorov, Petrovskii and Piscounov \cite{KolmPetrPisc37}. Much less is
known, however, in the case of degenerate diffusion equations. One of the few
results is contained in \cite{LeyvPlaz20}, following the general method outlined
in \cite{MeyrRadeSier14} for general reaction-diffusion systems.

\subsection{Reaction-diffusion systems with cross-dependent self-diffusivities}

As expected, for systems, the situation is less clear. 
First of all, it is necessary to agree on the terminology.
Let us consider, for simplicity, a $2\times2$ reaction-diffusion system of the form
\begin{equation}\label{2x2}
  \left\{\begin{aligned}
  \partial_t u \,&=\, \partial_x\bigl\{\phi_{11}(u,v)\partial_x u+\phi_{12}(u,v)\partial_x 
  v\bigr\}+F(u,v)\,, \\ 
  \partial_t v \,&= \partial_x\bigl\{\phi_{21}(u,v)\partial_x u+\phi_{22}(u,v)\partial_x 
  v\bigr\}+ G(u,v)\,,
  \end{aligned}\right.
\end{equation}
for some diffusivities $\phi_{ij}$ with $i,j\in\{1,2\}$ and reaction terms $F, G$.

Many examples for \eqref{2x2} with a significant applied perspective can be provided.
Among others, the celebrated {\it Keller--Segel chemotaxis model} --proposed as a
description for the motion of bacteria towards some optimal environment \cite{KellSege71}--
fits into the class choosing
\begin{equation*}
	\phi_{11}(u,v)=a,\quad \phi_{12}(u,v)=0,\quad
	\phi_{21}(u,v)=-b\,\chi(u),\quad \phi_{22}(u,v)=\mu(u),
\end{equation*}
and $F(u,v)=-\kappa(u)v$, $G(u,v)= 0$ for some parameters $a, b>0$ and functions $\kappa$,
$\mu$, $\chi$.  The Keller--Segel system can be regarded as a prototype of ``exotaxis''
models because the gradient in the concentration of one species induces a flux of another
species.

A second example is \cite{ShigKawaTera79} --the progenitor of a long lineage--
where the terminology {\it cross-diffusion system} has been used to denote a
particular case of \eqref{2x2} characterized by the presence of the diffusion term
$\partial_x^2 \psi_i$ in place of $\partial_x(\phi_{i1}\partial_x u +\phi_{i2}\partial_x v)$ 
for $i=1,2$. In other words we assume that there exist $\psi_1, \psi_2$ such that
$\phi_{11} = \partial_u \psi_1$, $\phi_{12} = \partial_v\psi_1$, $\phi_{21} = \partial_u 
\psi_2$, $\phi_{22}=\partial_v\psi_2$, which is not the case in general. 

A simplified version of \eqref{2x2} is obtained by assuming the terms $\phi_{12}$ and
$\phi_{21}$ to be null, that is focusing on systems with {\it cross-dependent 
self-diffusivities}:
\begin{equation}\label{2x2self}
	\left\{\begin{aligned}
	\partial_t u \,&=\, \partial_x\left\{\phi_{11}(u,v)\partial_x u\right\}+F(u,v)\,, \\ 
  	\partial_t v \,&= \partial_x\left\{\phi_{22}(u,v)\partial_x v\right\}+ G(u,v)\,.
	\end{aligned}\right.
\end{equation}
Both species are submitted to self-diffusion with a diffusivity coefficient that, in
general, may depend on the other variable. The reduced Gatenby--Gawlinski model 
\eqref{redGGsys} fits into \eqref{2x2self} with the choices
\begin{equation*}
  \phi_{11}(u,v)=0,\quad \phi_{22}(u,v)=f(u),\quad F(u,v)=u \bigl\{f(u) - dv\bigr\}, 
  \quad G(u,v)= r v f(v).
\end{equation*}
Coming back to the topic of invasion fronts in the case of reaction-diffusion systems, a
huge difference with respect to the scalar case arises already for linear self-diffusion
because the dimension of the phase-space for the traveling wave ODE is strictly larger
than two.  Nonlinear non-degenerate self-diffusions make the analysis harder, but, in
principle, still manageable with an adapted strategy.  In contrast, when the diffusion
operator degenerates at some values, the situation becomes more involved with a
pivotal role played by an appropriate {\it desingularization procedure}, which 
will be explained below. 

Focusing on the case of cross-dependent self-diffusivities, after the pioneering
contribution of Aronson \cite{Aron80} devoted to a predator-prey system, the
attention moved toward the model proposed by Kawasaki et al. \cite{KawaEtAl97},
which attempts to provide a detailed description of the patterns generate by
some colonies of bacteria, called {\it Bacillus subtilis} (see
\cite{BenjCoheLevi00} for a comprehensive review on cooperative
self-organization of micro-organisms). This model is composed by two coupled
evolution equations for the population density $b$ and the concentration of
nutrient $n$.  Degenerate cross-dependent self-diffusion appears in the equation
for the bacteria $b$ and is proportional to the product of the two unknowns,
i.e.  $D(n,b) \propto n\cdot b$.  Investigations on existence of propagating
fronts in bacteria growth models --either from a purely analytical point of view
or from a numerical perspective-- have been performed in
\cite{FengZhou07,Mans10a,Mans17,SatnMainGardArmi01}. In particular, in
\cite{Mans17, SatnMainGardArmi01}, the existence result is very similar to the
one valid for the scalar case, including the existence of a traveling wave for
the critical speed.

Existence of propagation fronts for the complete Gatenby--Gawlinski model (and its 
modifications) is cogently supported by partial results and numerical calculations, 
see \cite{DaviEtAl18,FasaHerrRodr09,HoldRodrHerr14, McGiEtAl14, MoscSime19}. However, 
rigorous mathematical results are very limited if not completely missing. Again, a
distinguished feature of the model is the presence of cross-dependent self-diffusion. 
In contrast with the bacteria models, the cross-diffusion term $f(U)$ in the 
$V$-equation is a monotone {\em decreasing} function of the variable $U$, and our 
goal is to explore in detail the consequences of such kind of coupling, in the 
particular example of the reduced Gatenby--Gawlinski model \eqref{redGGsys}. 

\subsection{Statement of the main results}

A {\it traveling wave} for \eqref{redGGsys} is a solution of the form
\begin{equation*}
  U(x,t) \,=\, \cU(x-ct)\,, \quad V(x,t) \,=\, \cV(x-ct)\,,
\end{equation*}
where the parameter $c \in \R$ is the propagation speed. If $\xi := x-ct$ denotes 
the space variable in a comoving frame, the traveling wave profile formally 
satisfies the ODE system
\begin{equation}\label{Ode1}
  c\,\frac{\D\cU}{\D\xi} + \cU\bigl\{f(\cU) - d\,\cV\bigr\} \,=\, 0\,,\qquad
  \frac{\D}{\D\xi}\left\{f(\cU)\frac{\D\cV}{\D\xi}\right\} + c\,\frac{\D\cV}{\D\xi} 
  + r \cV f(\cV) \,=\, 0\,.
\end{equation}
A {\it propagation front} is a special type of traveling wave, enjoying the 
asymptotic conditions
\begin{equation}\label{BC1}
  \lim_{\xi\to\pm\infty} \bigl(\cU,\cV\bigr)(\xi) \,=\, \bigl(\cU_\pm,\cV_\pm\bigr)\,.
\end{equation}
The asymptotic values $\bigl(\cU_\pm,\cV_\pm\bigr)$ are forced to be
constant equilibria to system \eqref{redGGsys}. While numerical evidence
of existence of traveling waves has been provided in \cite{MascMoscSime21, MoscSime19},
no rigorous result was obtained so far. In what follows, we will reconsider the above 
definition of propagation front in order to incorporate
the presence of possible degeneracies.

When $d \neq 1$, system \eqref{redGGsys} has exactly four constant equilibria\:
the trivial state $(\bar U,\bar V) = (0,0)$, the healthy state $(1,0)$, the
cancerous state $(0,1)$ and the heterogeneous state $(1-d,1)$.  Note that the
last equilibrium is positive, hence biologically significant, if and only if
$d < 1$. In the limiting case $d = 1$, the system has only three uniform
equilibria.

In this paper, we mainly concentrate on the case $d>1$, which seems most relevant in
cancerology (considering $d$ as a measure of aggressiveness of the tumor),
and we look for traveling wave solutions that describe the invasion of the
healthy state by the infected state. In other words, we choose as asymptotic values
\begin{equation}\label{BC1bis}
  \bigl(\cU_-,\cV_-\bigr) \,=\, \bigl(0,1\bigr) \quad\textrm{and}\quad
  \bigl(\cU_+,\cV_+\bigr) \,=\, \bigl(1,0\bigr)\,.
\end{equation}
This situation is referred to as {\bf homogeneous invasion}. We also study
more succinctly the regime $0<d<1$, in which homogeneous invasion is
not possible. In that case, we focus on {\bf heterogeneous invasion} which 
corresponds to the asymptotic states
\begin{equation}\label{BC2}
  \bigl(\cU_-,\cV_-\bigr) \,=\, \bigl(1-d,1\bigr)
  \quad\textrm{and}\quad
  \bigl(\cU_+,\cV_+\bigr) \,=\, \bigl(1,0\bigr)\,.
\end{equation}
The particular case $d = 1$ is non-generic, and will not be considered here. 

It is important to keep in mind that the second equation in \eqref{redGGsys} is a {\it
  degenerate parabolic equation}, see \cite{SancMain95}, in the sense that the coefficient
$f(U)$ in front of the leading order term $\partial_x^2 V$ vanishes when $U = 1$. For that
reason, it is not clear at all that system~\eqref{redGGsys} has global classical solutions,
and a similar caveat applies to the ODEs \eqref{Ode1} satisfied by the traveling waves.
Hence, we adopt here the following definition, which is adapted from \cite{HilhEtAl08} and based
on the notion of weak solution.

\begin{df}\label{df:front}
The triple  $(\cU,\cV; c)$ is a {\bf propagation front} for system \eqref{redGGsys} 
connecting the asymptotic states $\bigl(\cU_-,\cV_-\bigr)$ and 
$\bigl(\cU_+,\cV_+\bigr)$ if
\begin{itemize}
  \item[\bf i)] $(\cU,\cV)\in C(\R\,;\,[0,1])\times C(\R\,;\,[0,1])$ 
  and $f(\cU)\dfrac{\D\cV}{\D\xi}\in L^2(\R)$;
  \item[\bf ii)] $(\cU,\cV)$ is a weak solution to \eqref{Ode1}, i.e. for all 
  $(\phi,\psi) \in C^1(\R)\times C^1(\R)$ with compact support
\begin{align}\label{frontdef1}
  &\int_{\R} \cU\left\{c\,\frac{\D\phi}{\D\xi} - \bigl[f(\cU)-d\,\cV\bigr]
  \phi\right\}\D \xi\,=\, 0\,,\\ \label{frontdef2}
  &\int_{\R} \left\{\left[f(\cU)\frac{\D\cV}{\D\xi} + c\,\cV\,\right]
  \frac{\D\psi}{\D\xi}  - r \cV f(\cV) \psi \right\}\D \xi\,=\, 0\,;
\end{align}
\item[\bf iii)] the asymptotic conditions \eqref{BC1} are satisfied.
\end{itemize}
The couple $(\cU,\cV)$ is the {\bf profile of the front} and the value $c$ is 
the {\bf speed of propagation}.
\end{df}

The main result of this paper is the following.

\begin{thm}\label{main1}
Assume that $d > 1$ and $r > 0$. For any $c > 0$, the reduced Gatenby--Gawlinski
system \eqref{redGGsys} has a propagation front $\bigl(\cU,\cV;c\bigr)$ connecting 
$(0,1)$ with $(1,0)$. This solution is unique up to translations and both components 
$\cU, \cV$ are strictly monotone functions of $\xi = x-ct$.
\end{thm}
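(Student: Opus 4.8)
The plan is to recast the boundary value problem \eqref{Ode1}--\eqref{BC1bis} as a heteroclinic connection problem for an autonomous system, to remove the loss of parabolicity at $\cU=1$ by a rescaling of the independent variable, and then to build the connection by a shooting argument resting on an invariant--region analysis.

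\emph{Reduction.} A non-constant front connecting $\cU_-=0$ to $\cU_+=1$ satisfies $\cU\in(0,1)$ on all of $\R$: the weak formulation makes $\cU$ a $C^1$ solution of the linear (in $\cU$) equation $c\,\cU'=\cU(f(\cU)-d\cV)$, so $\cU$ never vanishes at a finite point, and if $\cU=1$ at some $\xi_1$ then $\xi_1$ is an interior maximum, forcing $\cU'(\xi_1)=0$, whereas the equation gives $c\,\cU'(\xi_1)=-d\,\cV(\xi_1)$, which is negative unless $\cV(\xi_1)=0$ --- in which case $(\cU,\cV)(\xi_1)=(1,0)$ is an equilibrium and the front is constant. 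On that range the first equation in \eqref{Ode1} is a Bernoulli equation for $\cU$ that determines $\cV$ explicitly, $d\,\cV=f(\cU)+c\,\cU'/\cU$. Equivalently, introducing the flux $p:=f(\cU)\,\D\cV/\D\xi$ and the rescaled variable $\tau$ with $\D\xi=f(\cU)\,\D\tau$, the profile system \eqref{Ode1} becomes the polynomial system
\begin{equation*}
  \dot\cU \,=\, \tfrac1c\,\cU\,f(\cU)\bigl(d\,\cV-f(\cU)\bigr)\,,\qquad
  \dot\cV \,=\, p\,,\qquad
  \dot p \,=\, -\,c\,p-r\,\cV\,f(\cV)\,f(\cU)\,,
\end{equation*}
the dot being $\D/\D\tau$. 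The rescaling makes $\{\cU=1\}$ an invariant plane and replaces the degenerate point $(1,0)$ by the line of equilibria $\ell:=\{\cU=1,\ p=0\}$. A propagation front as in Definition \ref{df:front} corresponds to an orbit leaving $E_-:=(0,1,0)$ as $\tau\to-\infty$ and converging to $E_+:=(1,0,0)\in\ell$ as $\tau\to+\infty$, staying in the box $\cB:=\{0\le\cU\le1,\ 0\le\cV\le1,\ p\le0\}$.

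\emph{Local behaviour at the endpoints.} The linearization at $E_-$ has eigenvalues $(d-1)/c>0$ (this is where the hypothesis $d>1$ enters) and $\tfrac12\bigl(-c\pm\sqrt{c^2+4r}\bigr)$, so $E_-$ has a two-dimensional unstable manifold which, modulo translation, is foliated by a one-parameter family of orbits entering the interior of $\cB$; these are the candidate fronts. At $E_+$ the linearization is non-hyperbolic with spectrum $\{0,0,-c\}$, the kernel accounting for $\ell$ together with one genuine center direction; a center-manifold reduction along $\ell$ shows that on the attracting center manifold $1-\cU\sim\tfrac{d}{r+1}\,\cV$ and $p\sim-\tfrac{r}{c}\,f(\cU)\,\cV$ as $\cV\downarrow0$, so $\cV$, $1-\cU$ and $p$ decay monotonically, the front reaches $(1,0)$ exponentially in $\xi$, and $f(\cU)\,\D\cV/\D\xi=p$ is square integrable near $+\infty$, as Definition \ref{df:front} requires. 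This degeneracy is also what removes any positive lower bound on admissible speeds: a constant-coefficient Fisher--KPP equation needs $c\ge2\sqrt{r}$ to avoid oscillations about $V=0$, whereas here $(1,0)$ is approached monotonically for every $c>0$. Feeding this local description into the outer relation $\cV\approx1-\cU$ coming from the first equation yields the announced $c\to0$ profile.

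\emph{Existence, monotonicity, uniqueness.} On $\cB$ the vector field has a definite structure: $\dot\cV=p\le0$ and $p\le0$ is preserved ($\dot p\le0$ on $\{p=0\}\cap\cB$), while $\dot\cU$ has the sign of $d\cV-f(\cU)$, which is positive near $E_-$. I would follow the one-parameter family from $E_-$ forward and split it into: (A) orbits along which $\cV$ reaches $0$ while $\cU<1$, which then leave $\cB$ through $\{\cV=0\}$ or converge to the trivial equilibrium $(0,0,0)$ (``undershoot''); and (B) orbits converging to a point $(1,\cV_0,0)\in\ell$ with $\cV_0>0$, a transversally hyperbolic sink of the flow (``overshoot''). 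The invariant curve $\{\cU=0\}$ carries a member of class (A) (the Fisher--KPP-type orbit running to $(0,0,0)$) and the invariant curve $\{\cV=1,\ p=0\}$ carries a member of class (B) (running to $(1,1,0)$), so both classes are non-empty; together with their openness in the shooting parameter, a connectedness argument yields a threshold orbit belonging to neither class, hence remaining in $\cB$ for all $\tau$. Since $\cV$ is monotone along it, its $\omega$-limit set is an equilibrium of the system inside $\cB$, and the exclusions leave only $E_+$; this produces, for every $c>0$, a front $(\cU,\cV;c)$. Strict monotonicity of the profile follows because $p<0$ in the interior (else $(\cU,\cV,0)$ with $\cV\in(0,1)$ would be an equilibrium) and $\dot\cU>0$ there: on $\{d\cV=f(\cU)\}$ one computes that $\tfrac{\D}{\D\tau}(d\cV-f(\cU))$ has the sign of $p<0$, so the threshold orbit crosses that surface at most once and, being pinned on it at both ends, never does. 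Uniqueness up to translation follows from monotone dependence of the above dichotomy on the shooting parameter (an orbit that has over- or undershot stays in its class), or alternatively from a sliding/maximum-principle argument comparing two fronts on the $\cV$-equation, the first equation then forcing their $\cU$-components to agree.

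\emph{Main obstacle.} The genuinely delicate step is the analysis at the degenerate endpoint $E_+$: making the center-manifold reduction along the equilibrium line $\ell$ rigorous, showing that the threshold orbit limits precisely to $E_+$ rather than drifting along $\ell$ or escaping through $\{\cV<0\}$, and extracting asymptotics sharp enough both to verify the integrability condition in Definition \ref{df:front} and to confirm a posteriori that the weak front is a classical solution on $\R$. A secondary difficulty is to prove, for every $c>0$, the non-emptiness and openness of the overshoot and undershoot classes; this is the point at which the distinguishing feature of the model --- the cross-diffusivity $f(\cU)$ being a decreasing function of $\cU$ --- enters the comparison estimates.
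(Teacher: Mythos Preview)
Your overall strategy coincides with the paper's: the same desingularization $\D\xi=f(\cU)\,\D\tau$, the same three-dimensional system with a two-dimensional unstable manifold at $E_-=(0,1,0)$ and a line of equilibria through $E_+=(1,0,0)$, and a shooting argument over the one-parameter family of orbits on $W^u(E_-)$. The center-manifold analysis at $E_+$ that you flag as the ``main obstacle'' is indeed the heart of the matter, and the paper carries it out exactly as you outline (their Lemma~\ref{lem:Wzero}).

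Where your proposal and the paper differ is in the engine that drives the shooting. You argue topologically: classes (A) and (B) are open and non-empty, so a threshold orbit exists by connectedness. The paper instead proves a \emph{monotonicity lemma} (their Lemma~\ref{lem:monotone}): if $\alpha_2>\alpha_1$ then $u_{\alpha_2}(y)>u_{\alpha_1}(y)$ and $v_{\alpha_2}(y)>v_{\alpha_1}(y)$ for all $y$. This is not obvious---the proof requires a second-order expansion near $E_-$ to get the ordering started, and then a maximum-principle argument on the ratio $v_{\alpha_2}/v_{\alpha_1}$ to propagate it. With monotonicity in hand, the thresholds $\alpha_0(c)$ and $\alpha_1(c)$ are well defined and their properties follow cleanly.

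Your topological argument suffices for existence, but your uniqueness argument has a genuine gap. You write that uniqueness ``follows from monotone dependence of the above dichotomy on the shooting parameter,'' but this monotone dependence \emph{is} the paper's Lemma~\ref{lem:monotone}, and you have not proved it---openness of (A) and (B) alone does not exclude multiple threshold values. The paper's uniqueness proof (Lemma~\ref{lem:unique}) combines this monotonicity, which gives $v_{\alpha'}/v_\alpha\ge\rho>1$ for $\alpha'>\alpha$, with the center-manifold asymptotics \eqref{asym2}, which force $v_{\alpha'}/v_\alpha\to1$ whenever both orbits reach $E_+$; the contradiction rules out a second front. Your alternative ``sliding/maximum-principle argument comparing two fronts on the $\cV$-equation'' is not straightforward either, since the system is genuinely coupled and has no obvious comparison structure. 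A smaller point: your ``pinned on it at both ends'' argument for $\dot\cU>0$ needs the sign of $d\cV-f(\cU)$ as the orbit approaches $E_+$, which again comes from the center-manifold expansion; the paper's argument (Lemma~\ref{lem:limits}) is cleaner---once the surface $\{d\cV=f(\cU)\}$ is crossed, $u$ decreases thereafter, so $u_\infty=1$ is impossible.
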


The proof also provides detailed information on the behavior of the front profile as
$\xi \to \pm\infty$.  In particular, we can choose a translate of the wave such that
\begin{equation}\label{AsymUV1}
  \cU(\xi) \,=\, \alpha e^{\mu \xi} + \cO\bigl(e^{(\mu+\eta)\xi}\bigr)\,,
  \qquad \cV(\xi) \,=\, 1 - e^{\lambda \xi} + \cO\bigl(e^{(\lambda+\eta) 
  \xi}\bigr)\,, \qquad \hbox{as}\quad \xi \to -\infty\,,
\end{equation}
for some $\alpha > 0$, where
\begin{equation}\label{lamudef}
  \lambda \,=\, \frac12\bigl(-c + \sqrt{c^2 + 4r}\bigr) \,>\, 0\,,
  \qquad \mu \,=\, \frac{d-1}{c} \,>\, 0\,, \qquad 
  \eta \,=\, \min(\lambda,\mu)\,.
\end{equation}
Moreover, there exists $\beta > 0$ such that
\begin{equation}\label{AsymUV2}
  \cU(\xi) \,=\, 1 - \beta e^{-\gamma\xi} + \cO\bigl(e^{-2\gamma\xi}\bigr)\,,
  \qquad \cV(\xi) \,=\, \beta\,\frac{r+1}{d}\,e^{-\gamma\xi} 
  + \cO\bigl(e^{-2\gamma \xi}\bigr)\,,
\end{equation}
as $\xi \to +\infty$, where $\gamma = r/c$. 

\begin{figure}[ht]
  \begin{center}
 \begin{picture}(480,160)% width and height of the picture
  \put(20,0){\includegraphics[width=0.43\textwidth]{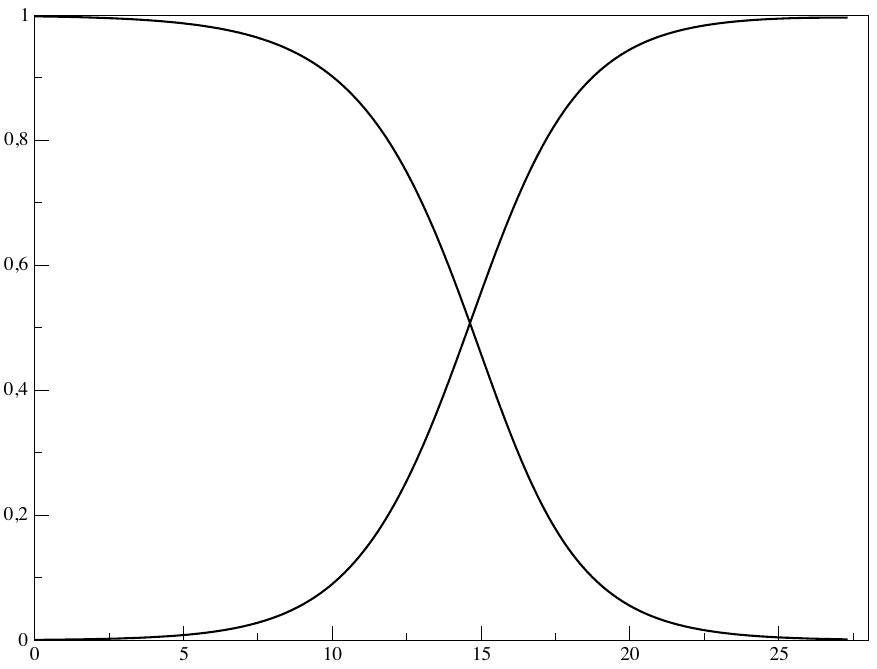}}
  \put(250,0){\includegraphics[width=0.43\textwidth]{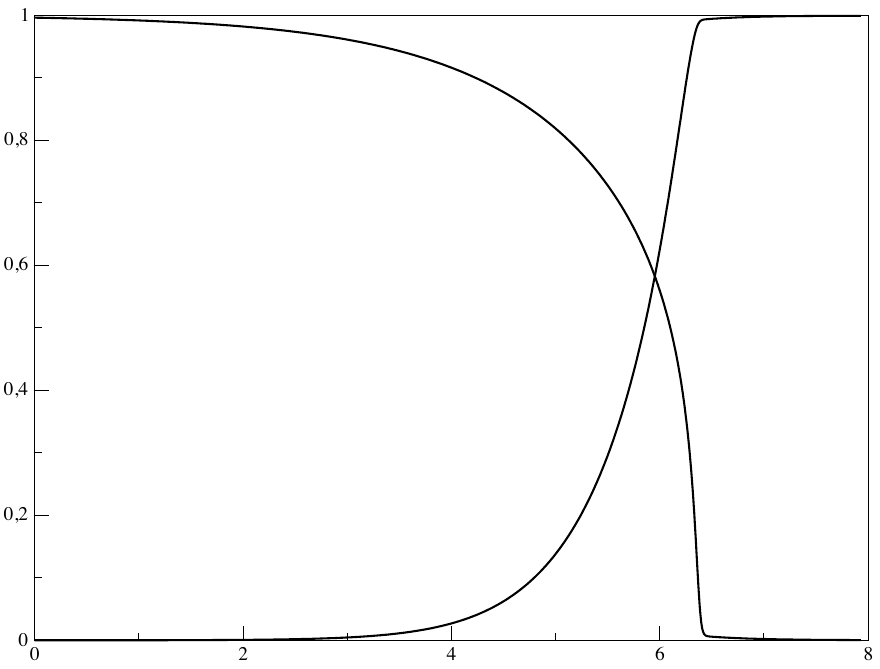}}
  \put(143,110){$\cU$}
  \put(90,110){$\cV$}
  \put(398,110){$\cU$}
  \put(357,110){$\cV$}
  \put(40,20){$c = 2.0$}
  \put(270,20){$c = 0.5$}
  \put(130,73){$\xrightarrow[\hspace{1.5cm}]{}$}
  \put(395,83){$\xrightarrow[\hspace{0.5cm}]{}$}
  \end{picture}
  \caption{{\small The profile $(\cU,\cV)$ of the propagation front describing homogeneous 
  invasion is represented as a function of $\xi = x-ct$ for $d = 2$, $r = 1$, 
  and for two different values of the speed parameter. Although the profile is smooth 
  in all cases, the right picture shows the appearance of a rather sharp edge when
  $c$ is small, a phenomenon that will be studied in Section~\ref{sec3}}. \label{fig1}}
  \end{center}
\end{figure}

Most remarkably, Theorem~\ref{main1} shows that there exists {\bf no minimal speed} for
the propagation fronts of system \eqref{redGGsys}. This is in sharp contrast with what
happens for scalar equations involving a degenerate diffusion, see
\cite{SancMain94b,SancMain97} and also \cite{SatnMainGardArmi01} (for the Kawasaki
system). To elaborate on that, we consider a further formal reduction of system
\eqref{redGGsys}, which seems reasonable at least for solutions that evolve slowly in
time. In view of the first equation in \eqref{redGGsys}, one can expect that the first
component $U$ should stay close to $1-dV$ if $dV \le 1$ and to zero if $dV > 1$. Assuming
this to be exactly true, we obtain a scalar evolution equation for the second component
$V$\:
\begin{equation}\label{GGscalar}
  \partial_t V \,=\, \partial_x \left\{\phi(V)\partial_x V\right\}+ r V f(V)\,,
  \qquad \hbox{where}\quad \phi(V) \,:=\, \min(dV,1)\,.
\end{equation}
Strictly speaking, the results of \cite{SancMain97, MalaMarc05} do not apply to
\eqref{GGscalar} because the diffusion coefficient $\phi$ is only a Lipschitz
function of $V$. Disregarding that technical issue, we expect nevertheless that
equation \eqref{GGscalar} has monotone front solutions satisfying
$\cV(-\infty) = 1$, $\cV(+\infty) = 0$ if and only if $c \ge c_*$, for some
minimal speed $c_* = c_*(d,r) > 0$. Moreover, when $c = c_*$, the front profile
is ``sharp'' in the sense that there exists $\bar\xi \in \R$ such that
$\cV(\xi) = 0$ for all $\xi \ge \bar\xi$.  Quite surprisingly,
Theorem~\ref{main1} shows that the PDE system \eqref{redGGsys} behaves
differently\: propagation fronts exist for all positive speeds $c > 0$, no
matter how small, and all front profiles are smooth and strictly monotone. Sharp
fronts, which are typical for scalar equations with degenerate diffusion, do not
exist in system \eqref{redGGsys}.

Of course, this discrepancy means that the formal reduction leading to \eqref{GGscalar}
is not justified. In fact, if $(\cU,\cV)$ is the front profile given by 
Theorem~\ref{main1}, for some values of the parameters $d,r,c$, we can introduce
the {\em effective diffusion coefficient} $\phi : (0,1) \to (0,1)$ defined by
\begin{equation}\label{Deff}
  1 - \cU(\xi) \,=\, \phi\bigl(\cV(\xi)\bigr)\,, \qquad \xi \in \R\,.
\end{equation}
By construction, the second component $\cV$ of the front profile is a
traveling wave solution of the scalar equation \eqref{GGscalar} with
$\phi$ given by \eqref{Deff}. In particular we must have
$c \ge c_*(\phi,r)$, where $c_*$ is the minimal speed for the scalar
equation. Numerical calculations shows that the shape of the effective
diffusion coefficient $\phi$ depends strongly on the values of the
parameters $d,r,c$, and it often very different from the naive guess
$\phi(V) = \min(dV,1)$.  This is especially true when $c$ is small, in 
which case $\phi(V)$ is found to be extremely flat near the origin $V = 0$, 
see the discussion at the end of Section~\ref{sec3}.

Going back to \eqref{redGGsys}, in the case where $0 < d < 1$, the system has an additional,
biologically significant, equilibrium $(\bar U,\bar V) = (1-d,1)$ in which healthy and cancerous
cells coexist. In that case, it is natural to consider traveling waves that connect the
coexistence state to the healthy state given by \eqref{BC2}. We have the following
analogue of Theorem~\ref{main1}.

\begin{thm}\label{main2}
Assume that $0 < d < 1$ and $r > 0$. For any $c > 0$, the reduced Gatenby--Gawlinski
system \eqref{redGGsys} has a propagation front $\bigl(\cU,\cV;c\bigr)$ connecting 
$(1-d,1)$ with $(1,0)$. This solution is unique up to translations and both
components $\cU$ and $\cV$ are strictly monotone. Moreover, there is no 
propagation front connecting $(0,1)$ with $(1,0)$ in that case. 
\end{thm}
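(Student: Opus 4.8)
The plan is to follow the proof of Theorem~\ref{main1} essentially line by line, since the only structural difference is the left asymptotic state; the right endpoint $(1,0)$, its degeneracy $f(\cU_+)=0$, the attendant desingularization, and the asymptotics~\eqref{AsymUV2} (with $\gamma=r/c$) are literally the same. First I would recast the profile ODE~\eqref{Ode1} as a first-order system for $(\cU,\cV,q)$ with $q:=f(\cU)\,\cV'$, namely $c\,\cU'=-\cU\bigl(f(\cU)-d\cV\bigr)$, $\cV'=q/f(\cU)$, $q'=-c\,q/f(\cU)-r\,\cV f(\cV)$, whose equilibria are $(0,0,0)$, $(0,1,0)$, $(1,0,0)$ and the heterogeneous state $(1-d,1,0)$.

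The new ingredient is the analysis near $\xi=-\infty$. Since $0<d<1$ we have $f(1-d)=d>0$, so there is no degeneracy at $(1-d,1,0)$ and classical theory applies directly. A direct linearization shows this equilibrium is hyperbolic with a two-dimensional unstable manifold and a one-dimensional stable manifold: the relevant positive eigenvalues are $\tfrac{1-d}{c}$ and $\tfrac{1}{2d}\bigl(-c+\sqrt{c^2+4rd}\bigr)$, the third being negative. The local unstable manifold yields a one-parameter family, modulo translation, of trajectories along which $\cU$ increases away from $1-d$ and $\cV$ decreases away from $1$, which is the analogue of~\eqref{AsymUV1}. From here I would reproduce the global argument of Theorem~\ref{main1}: a shooting argument on this two-dimensional unstable manifold, together with barrier functions adapted to the degenerate endpoint, to show that exactly one trajectory (up to translation) stays in the relevant set $\{1-d<\cU<1,\ 0<\cV<1,\ q<0\}$ and converges to $(1,0,0)$. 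Two elementary sign facts organize the barriers: $\cU'>0$ is equivalent to $\cU+d\cV>1$ (because $\cU>0$ along the orbit), and $\cV'<0$ is equivalent to $q<0$; note that $\cU+d\cV-1$ vanishes exactly at the two asymptotic states. Strict monotonicity of $\cU$ and $\cV$ is then immediate, and uniqueness up to translation follows from the ordering of trajectories on the unstable manifold, exactly as in the homogeneous case. I expect this global step --- checking that the trapping mechanism of Theorem~\ref{main1} still functions with the displaced left endpoint --- to be the only point needing real care; the local work at $(1-d,1,0)$ is, if anything, simpler than at $(0,1,0)$ because it is non-degenerate.

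The non-existence of a front connecting $(0,1)$ with $(1,0)$ when $0<d<1$ is then elementary and independent of the rest. From the weak formulation~\eqref{frontdef1}, the right-hand side $-\cU\bigl(f(\cU)-d\cV\bigr)$ of the first equation in~\eqref{Ode1} is a continuous function of $\xi$, so $\cU\in C^1(\R)$ and $c\,\cU'=\cU\bigl(\cU+d\cV-1\bigr)$ holds classically. Were such a front to exist (with $c>0$), then $\cU\not\equiv 0$ since $\cU(+\infty)=1$, hence by uniqueness for this scalar ODE $\cU(\xi)>0$ for every $\xi$. But $\cU(\xi)+d\cV(\xi)-1\to d-1<0$ as $\xi\to-\infty$, so $\cU'=\tfrac1c\,\cU\,(\cU+d\cV-1)<0$ on some half-line $(-\infty,\Xi]$; thus $\cU$ is strictly decreasing there, which forces $\cU(-\infty)\ge\cU(\Xi)>0$, contradicting $\cU(-\infty)=0$. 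Therefore no such front exists.
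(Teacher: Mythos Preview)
Your existence argument is essentially the paper's: shoot from the two-dimensional unstable manifold of the coexistence state, use monotonicity in the shooting parameter, and analyze the center manifold at the healthy state. The paper carries all this out in the desingularized variables \eqref{Ode3}, so its eigenvalues at $S_d=(1-d,1,0)$ are $\mu=d(1-d)/c$ and $\lambda=\tfrac12(-c+\sqrt{c^2+4dr})$, which differ from yours by the factor $d=f(1-d)$ coming from the change of variables \eqref{ydef}; both are correct. Two details the paper makes explicit that your sketch glosses over: the shooting parameter $\alpha$ must now range over all of $\R$, not just $\alpha>0$ (see Lemma~\ref{lem:unstab2} and the discussion after it), and the return to original variables at $-\infty$ picks up a factor $1/d$ in the exponents because $1-u(y)\to d$ rather than $1$.

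Your non-existence argument is genuinely different and more elementary than the paper's. The paper observes that, in the desingularized system, the unstable manifold of $S_-=(0,1,0)$ is one-dimensional when $d<1$ (the eigenvalue $\mu=(d-1)/c$ has turned negative) and consists entirely of solutions with $u\equiv 0$, so no connection to $S_+$ is possible. You instead work directly with the first ODE in \eqref{Ode1}: since $\cU+d\cV-1\to d-1<0$ at $-\infty$, $\cU$ would be strictly decreasing on a left half-line while remaining positive, contradicting $\cU(-\infty)=0$. This avoids the desingularization entirely and is a nice shortcut; the paper's route, on the other hand, fits seamlessly into the machinery already set up for the existence proof.
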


\begin{figure}[ht]
  \begin{center}
  \begin{picture}(480,160)% width and height of the picture
  \put(20,0){\includegraphics[width=0.43\textwidth]{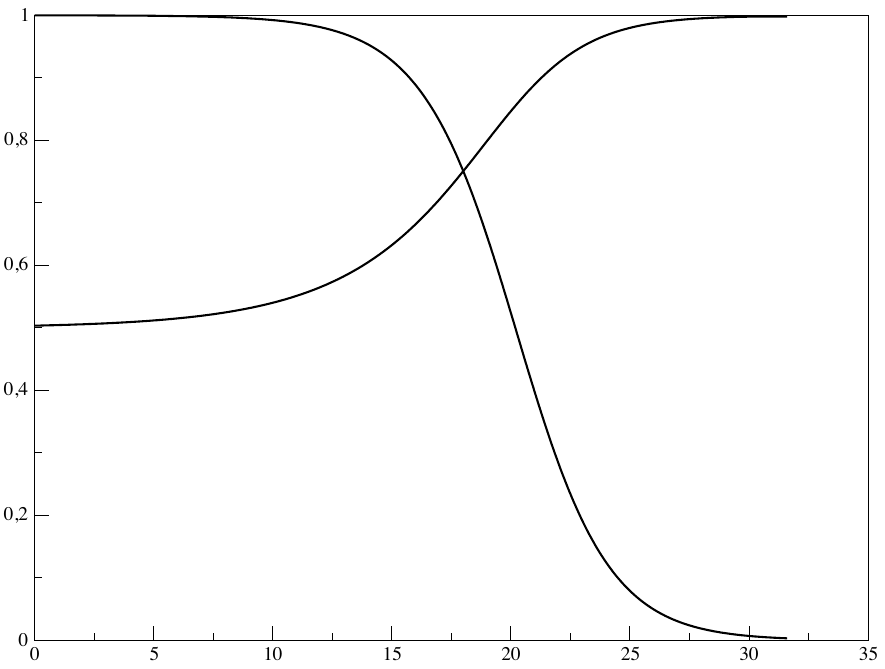}}
  \put(250,0){\includegraphics[width=0.43\textwidth]{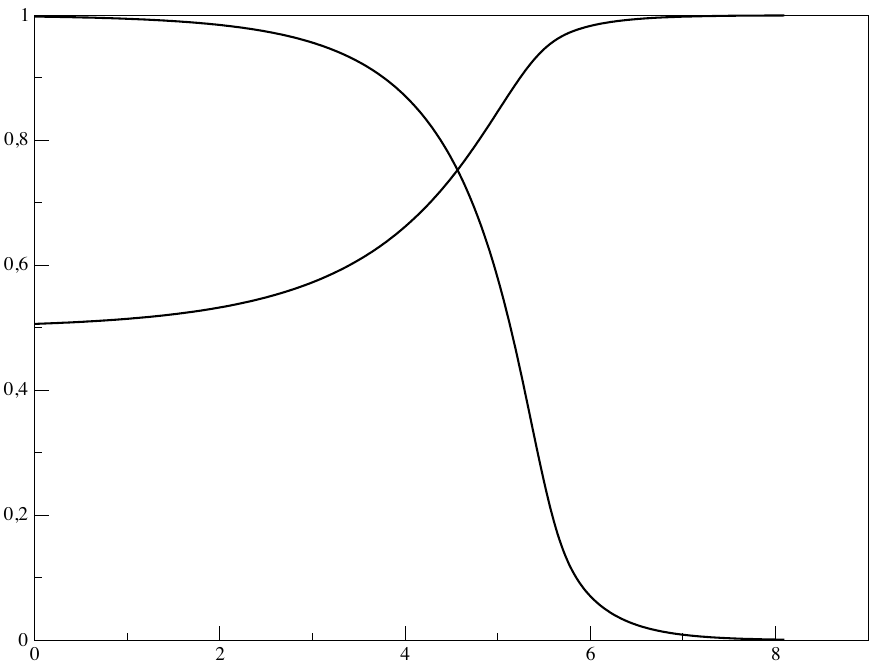}}
  \put(90,70){$\cU$}
  \put(144,40){$\cV$}
  \put(320,70){$\cU$}
  \put(370,40){$\cV$}
  \put(40,20){$c = 2.0$}
  \put(270,20){$c = 0.5$}
  \put(130,106){$\xrightarrow[\hspace{1.5cm}]{}$}
  \put(360,106){$\xrightarrow[\hspace{0.5cm}]{}$}
  \end{picture}
  \caption{{\small The profile $(\cU,\cV)$ of the front describing heterogeneous 
  invasion is represented for $d = 0.5$, $r = 1$, and the same values of $c$ 
  as in Fig.~\ref{fig1}. Here again the front profile becomes sharper 
  when $c$ is decreased (note that the horizontal scales are different
  in both pictures), but that phenomenon is less evident than for $d = 2$}. 
  \label{fig2}}
  \end{center}
\end{figure}

The rest of this article is organized as follows. Section \ref{sec2} is mainly
devoted to the proof of Theorem~\ref{main1}. Our starting point is the
desingularization of the ODE system \eqref{Ode1}, using a standard procedure
that was already known for scalar equations with degenerate diffusion. 
Propagation fronts are then constructed as heteroclinic connections between two
equilibria of the desingularized system. The unstable manifold of the infected
state is two-dimensional in that setting, which forces us to introduce an additional
shooting parameter, and an important part of our analysis relies on monotonicity
properties with respect to that shooting parameter. The proof of Theorem
\ref{main2} goes along the same lines, and is briefly presented in
Section~\ref{subsec27}. In Section~\ref{sec3}, we explore the limiting regime
where $c\to 0$, and we derive an asymptotic expansion of the front profile that
is remarkably accurate even at moderately small speeds.  Finally, we draw some
conclusions in Section~\ref{sec4} and we outline a few perspectives.

\subsection*{Acknowledgments.} The research of the first named author
is partially supported by the grant ISDEEC ANR-16-CE40-0013 of the French
Ministry of Higher Education, Research and Innovation. This work benefited 
from mutual invitations, on the occasion of which the hospitality of 
Universit\'e Grenoble Alpes and Sapienza, Universit\`a di Roma is gratefully 
acknowledged. 

\section{Existence of propagation fronts}\label{sec2}

This section is devoted to the proofs of Theorem~\ref{main1} and \ref{main2}.  We assume
that the reader is familiar with center manifold theory for ODEs and normal form theory.
All necessary material can be found in classical monographs such as \cite{CLW,GH,HI}, to
which we shall refer when needed.

In what follows, we fix the parameters $d > 0$ and $r > 0$ in system~\eqref{Ode1}.  All
quantities that appear in the proof depend on $d,r$, but for notational simplicity this
dependence will not be indicated explicitly.

\subsection{Preliminary results}
\label{subsec21a}

The underlying ODE system of \eqref{redGGsys} is
\begin{equation}\label{uniform}
  \frac{\D U}{\D t}=\, F(U,V;d)\,:=\,U\bigl(f(U)- d V\bigr)\,,\quad 
  \frac{\D V}{\D t}\,=\, G(U,V;r)\,:=\, r V f(V)\,.
\end{equation}
Regardless of the biological meaning of the variables, uniform
equilibria of \eqref{redGGsys} are
\begin{equation*}
  (\bar U,\bar V)=(0,0),\qquad (\bar U,\bar V)=(1,0),\qquad (\bar U,\bar V)=(0,1),
  \qquad (\bar U,\bar V)=(1-d,1)\,.
\end{equation*}
Computing the partial derivatives of $F$ and $G$, we determine the
linearized equation of \eqref{uniform} at $(\bar U,\bar V)$, which is
\begin{equation*}
	\frac{\D}{\D t}\begin{pmatrix} u \\ v\end{pmatrix} 
	=\mathbf{A}\begin{pmatrix} u \\ v\end{pmatrix} 
	\quad\textrm{where}\quad
	\mathbf{A}:=\begin{pmatrix} 1-2\bar U-d\bar V & -d\bar U \\ 0 & 
        r(1-2\bar V)\end{pmatrix}\,.
\end{equation*}
Hence, for the uniform equilibria, the following properties hold\:
\begin{itemize}
\item[\bf i)] $(0,0)$ is always an unstable node (eigenvalues $1$ and $r$);
\item[\bf ii)]  $(1,0)$ is always a saddle  (eigenvalues $-1$ and $r$);
\item[\bf iii)] $(0,1)$ is a saddle if $d<1$ and a stable node if $d>1$ 
(eigenvalues $1-d$ and $-r$);
\item[\bf iv)] $(1-d,1)$ is a stable node if $d<1$ and a saddle if $d>1$ 
(eigenvalues $d-1$ and $-r$).
\end{itemize}

Note the stability exchange between $(1-d,1)$ and $(0,1)$ when passing
the threshold $d=1$. In what follows, we look for propagation fronts with 
asymptotic states \eqref{BC1bis} if $d > 1$ and \eqref{BC2} if $d < 1$. 
The non-generic case $d = 1$ will not be considered. 

\begin{lem}\label{lem:positivec}
If $(\cU,\cV;c)$ is a propagation front in the sense of Definition~\ref{df:front}, 
then $\cV f(\cV) \in L^1(\R)$ and
\begin{equation}\label{cformula}
  c \,=\, r\int_{\R} \cV(\xi) f(\cV(\xi)) \dd\xi \,>\, 0\,.  
\end{equation}
\end{lem}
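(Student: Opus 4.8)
The plan is to integrate the second equation in \eqref{Ode1} over all of $\R$ and exploit the asymptotic conditions \eqref{BC1}. Taking the weak formulation \eqref{frontdef2} and choosing a test function $\psi$ that approximates the constant function $1$ on larger and larger intervals (a standard cutoff argument), the term $\bigl[f(\cU)\tfrac{\D\cV}{\D\xi} + c\cV\bigr]\tfrac{\D\psi}{\D\xi}$ localizes near the two ``ends'' of the cutoff, where it picks up the boundary contributions, while the reaction term integrates against $1$. Concretely, I would first establish that $f(\cU)\tfrac{\D\cV}{\D\xi} \in L^2(\R)$ together with the asymptotics forces this flux to tend to $0$ (in a suitable averaged sense) as $\xi \to \pm\infty$; combined with $\cV \to \cV_\pm$ and the explicit values $(\cU_\pm,\cV_\pm)$ from \eqref{BC1bis} or \eqref{BC2}, both of which have $\cV_+ f(\cV_+) = 0$ and $\cV_- f(\cV_-) = 0$, the boundary terms from $c\cV\tfrac{\D\psi}{\D\xi}$ contribute $c(\cV_- - \cV_+) = c(1-0) = c$.

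More carefully, here are the steps. \textbf{Step 1.} Show $\cV f(\cV) \in L^1(\R)$. Since $\cV \in C(\R;[0,1])$, the integrand is bounded; integrability must come from decay at $\pm\infty$. Near $+\infty$ we have $\cV \to 0$ and near $-\infty$ we have $\cV \to 1$, so $f(\cV) = 1-\cV \to 0$ there; to get an actual $L^1$ bound I would use the first equation to relate the decay of $\cV$ to that of $\cU$, or — more robustly — argue directly from \eqref{frontdef2} by testing against a family of cutoffs and showing the resulting integrals are uniformly bounded, which yields $\int_\R \cV f(\cV)\dd\xi < \infty$ by monotone convergence (the integrand is nonnegative because $\cV \in [0,1]$). \textbf{Step 2.} Define $g(\xi) := f(\cU(\xi))\tfrac{\D\cV}{\D\xi}(\xi) + c\,\cV(\xi)$; from \eqref{frontdef2}, $g$ is (weakly) an antiderivative of $r\cV f(\cV) \in L^1$, hence $g$ has limits $g(\pm\infty)$ as $\xi \to \pm\infty$ and $g(-\infty) - g(+\infty) = r\int_\R \cV f(\cV)\dd\xi$. \textbf{Step 3.} Identify the limits of $g$. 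Since $\cV(\pm\infty) = \cV_\pm$ are the explicit boundary values, $c\,\cV(\pm\infty) = c\,\cV_\pm$; and since $f(\cU)\tfrac{\D\cV}{\D\xi} \in L^2(\R)$, its limit at $\pm\infty$, if it exists, must be $0$ — and existence of that limit follows because $g$ has a limit and $c\cV$ has a limit. Therefore $g(-\infty) = c\cV_- = c$ and $g(+\infty) = c\cV_+ = 0$. \textbf{Step 4.} Combine: $c = c\cV_- - c\cV_+ = g(-\infty) - g(+\infty) = r\int_\R \cV f(\cV)\dd\xi$, and since the integrand is $\ge 0$ and not identically $0$ (the profile is non-constant, connecting $\cV_- = 1$ to $\cV_+ = 0$), we get $c > 0$.

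The main obstacle is making Step 3 fully rigorous: one only knows a priori that $\cV$ is continuous and $f(\cU)\cV'$ is in $L^2$, not that $\cV'$ itself is continuous or that $f(\cU)\cV'$ has a pointwise limit. The clean way around this is to note that once \eqref{frontdef2} shows $g$ equals a constant plus the integral of an $L^1$ function, $g$ is automatically (absolutely) continuous with genuine limits at $\pm\infty$; then since $\cV$ is continuous with limits $\cV_\pm$, the function $f(\cU)\cV' = g - c\cV$ also has limits at $\pm\infty$, and an $L^2$ function with a limit at infinity must have that limit equal to $0$. So the only real subtlety is bootstrapping enough regularity on $g$ from the weak formulation, which is routine. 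One should also take care that $\cV$ is not identically constant — but this is immediate from the asymptotic conditions \eqref{BC1} with $\cV_- \neq \cV_+$ — so that the inequality in \eqref{cformula} is strict rather than merely $c \ge 0$.
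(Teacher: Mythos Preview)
Your proposal is correct and uses essentially the same approach as the paper: test the weak formulation \eqref{frontdef2} against a family of cutoffs approximating $1$, use the $L^2$ hypothesis on $f(\cU)\cV'$ together with H\"older's inequality to kill the flux term, and use nonnegativity of $\cV f(\cV)$ with monotone convergence. The paper carries this out in a single pass---showing directly that $r\int \cV f(\cV)\psi_L \to c$---whereas you split it into first extracting the $L^1$ bound and then computing the value via the primitive $g$; the second half is a mild detour (once your Step~1 is done carefully, the value $c$ already falls out), but everything you wrote is sound.
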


\begin{proof} 
Let $\chi : \R \to \R$ be a smooth non-decreasing function satisfying 
$\chi(x) = -1/2$ for $x \le -1$ and $\chi(x) = 1/2$ for $x \ge 1$. 
Given any $L > 1$, we consider the relation \eqref{frontdef2} in 
Definition~\ref{df:front}, with $\psi(\xi) = \psi_L(\xi) := \chi(\xi+L) - \chi(\xi-L)$. 
Note that $\psi$ is a smooth approximation of the characteristic function 
of the interval $[-L,L]$. Since $\cV$ is a continuous function having finite 
limits at infinity, we find
\[
  c \int_\R \cV\,\psi_L'\dd\xi ~\xrightarrow[L\to\infty]{}~
  c \bigl(\cV(-\infty) - \cV(+\infty)\bigr) \,=\, c\,.
\]
Moreover, using H\"older's inequality, we can bound
\[
  \biggl|\int_\R f(\cU)\cV' \psi_L'\dd\xi\biggr| \,\le\, \|\chi'\|_{L^2} 
  \left\{\biggl(\int_{-L-1}^{-L+1} \bigl|f(\cU)\cV'\bigr|^2\dd\xi\biggr)^{1/2}
  + \biggl(\int_{L-1}^{L+1} \bigl|f(\cU)\cV'\bigr|^2\dd\xi\biggr)^{1/2}
  \right\}\,,
\]
and the right-hand side converges to zero as $L \to +\infty$ because
$f(\cU)\cV' \in L^2(\R)$ by assumption. So we deduce from \eqref{frontdef2}
that
\[
  r\int_\R \cV f(\cV)\psi_L \dd \xi ~\xrightarrow[L\to\infty]{}~c\,,
\]
which gives the desired result since $\cV f(\cV) \ge 0$ and $\psi_L$ increases 
to $1$ as $L \to +\infty$. 
\end{proof}

\begin{lem}\label{lem:sharp}
If $(\cU,\cV;c)$ is a propagation front in the sense of Definition~\ref{df:front}, 
there exists a unique point $\bar\xi \in \R \cup \{+\infty\}$ such that\\[1mm]
1) $\cU,\cV \in C^\infty((-\infty,\bar\xi))$ and $0 < \cU(\xi),\cV(\xi) < 1$ 
for $\xi < \bar\xi$;\\[1mm]
2) If $\bar\xi < \infty$, then $\cU(\xi) = 1$ and $\cV(\xi) = 0$ for all 
$\xi \ge \bar\xi$. 
\end{lem}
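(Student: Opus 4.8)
The plan is to upgrade the weak solution to a classical one on the region where the diffusion is nondegenerate, to identify the transition point $\bar\xi$ as the first place where $\cU$ reaches the critical value $1$, and then to use the integrability $f(\cU)\cV'\in L^2(\R)$ together with Lemma~\ref{lem:positivec} to show that the profile cannot escape from the equilibrium $(1,0)$ once it gets there.

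\emph{Regularity.} Since $c>0$ by Lemma~\ref{lem:positivec}, relation \eqref{frontdef1} says that $\cU$ solves $c\,\cU'=\cU\bigl(d\cV-f(\cU)\bigr)$ in $\cD'(\R)$, with continuous right-hand side, so $\cU\in C^1(\R)$ and the identity holds pointwise. Likewise \eqref{frontdef2} says that $W:=f(\cU)\cV'+c\,\cV$, which lies in $L^2_{\mathrm{loc}}(\R)$ by assumption~i), satisfies $W'=-r\,\cV f(\cV)$ in $\cD'(\R)$; the right-hand side being continuous and, by Lemma~\ref{lem:positivec}, integrable, $W$ coincides with a $C^1$ function that is non-increasing and has finite limits at $\pm\infty$. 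On the open set $\Omega:=\{\cU<1\}=\{f(\cU)>0\}$ one may then solve $\cV'=(W-c\cV)/f(\cU)$, a continuous function, so $\cV\in C^1(\Omega)$, and a routine bootstrap using the two ODEs yields $\cU,\cV\in C^\infty(\Omega)$.

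\emph{The transition point and the region to its left.} Set $\bar\xi:=\inf\bigl(\{\xi:\cU(\xi)=1\}\cup\{+\infty\}\bigr)$. Since $\cU(-\infty)=\cU_-<1$ and $\cU$ is continuous, $\bar\xi>-\infty$; by construction $(-\infty,\bar\xi)\subseteq\Omega$, and $\cU(\bar\xi)=1$ when $\bar\xi<\infty$. The key elementary remark is that $\cU(\xi_0)=1$ forces $\cV(\xi_0)=0$: otherwise the $\cU$-equation gives $c\,\cU'(\xi_0)=d\,\cV(\xi_0)>0$, so $\cU$ crosses the level $1$ increasingly, contradicting $\cU\le1$. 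In particular $\cV(\bar\xi)=0$ if $\bar\xi<\infty$. On $(-\infty,\bar\xi)$ the component $\cV$ solves the nondegenerate second-order ODE $\bigl(f(\cU)\cV'\bigr)'+c\,\cV'+r\,\cV f(\cV)=0$, of which both constants $0$ and $1$ are solutions (here $f(1)=0$ is used). Since $\cV$ is non-constant on that interval (it equals $1$ at $-\infty$ and $0$ at the other end, $\bar\xi$ or $+\infty$), uniqueness for this ODE prevents $\cV$ from ever equalling $0$ or $1$ there, for at such a value $\cV$ would be an interior extremum with vanishing derivative, hence constant. Thus $0<\cV<1$ on $(-\infty,\bar\xi)$; similarly $\cU>0$ on all of $\R$, because $\cU\equiv0$ solves the $\cU$-equation while $\cU(+\infty)=1$. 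With the regularity above, this proves part~1).

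\emph{Trapping at $(1,0)$ --- the main difficulty.} Assume $\bar\xi<\infty$; the point is to show $(\cU,\cV)\equiv(1,0)$ on $[\bar\xi,\infty)$. This is the one genuinely delicate step: at $\xi=\bar\xi$ the coefficient $f(\cU)$ vanishes, uniqueness for the $\cV$-equation fails, and there is no local obstruction to the orbit leaving $(1,0)$ again, so global information must be used. First, from $f(\cU)\cV'=W-c\cV\in L^2(\R)$ and the existence of the limits of $W$ at $\pm\infty$, together with $\cV(-\infty)=1$ and $\cV(+\infty)=0$, one gets $W(-\infty)=c$ and $W(+\infty)=0$, hence $0\le W\le c$ on $\R$ since $W$ is non-increasing. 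Second, the mean value theorem on the intervals $[\xi,\bar\xi]$ with $\xi\uparrow\bar\xi$ produces points $\zeta_n\uparrow\bar\xi$ with $\cV'(\zeta_n)<0$ (recall $\cV>0$ on $(-\infty,\bar\xi)$ and $\cV(\bar\xi)=0$), so $W(\zeta_n)=f(\cU(\zeta_n))\cV'(\zeta_n)+c\,\cV(\zeta_n)\le c\,\cV(\zeta_n)\to0$ and therefore $W(\bar\xi)\le0$ by continuity; combined with $W\ge0$ this gives $W(\bar\xi)=0$. As $W$ is non-increasing with $W(+\infty)=0$, it vanishes identically on $[\bar\xi,\infty)$, hence $\cV f(\cV)\equiv0$ there, and since $\cV$ is continuous with $\cV(\bar\xi)=0$ we conclude $\cV\equiv0$ on $[\bar\xi,\infty)$. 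Feeding $\cV=0$ into the $\cU$-equation gives $c\,\cU'=-\cU f(\cU)\le0$, so $\cU$ is non-increasing on $[\bar\xi,\infty)$ with $\cU(\bar\xi)=1$ and $\cU\le1$, forcing $\cU\equiv1$. This proves part~2). Uniqueness of $\bar\xi$ is immediate, since at a point satisfying property~2) the profile has already left the region $\{0<\cU<1\}$, whereas property~1) keeps it strictly inside at every earlier point.
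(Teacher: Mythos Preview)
Your proof is correct, with one small slip in the final step (see below). For part~1) you proceed essentially as the paper does: upgrade $\cU$ to $C^1$ via \eqref{frontdef1}, define $\bar\xi$ as the first time $\cU$ hits $1$, bootstrap on the nondegenerate region, and rule out $\cV\in\{0,1\}$ by the same interior-extremum/uniqueness observation.

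For part~2), however, you take a genuinely different route. You introduce the flux $W=f(\cU)\cV'+c\cV$, observe that it is $C^1$, non-increasing (since $W'=-r\cV f(\cV)\le0$), and pinned between $W(-\infty)=c$ and $W(+\infty)=0$; a mean-value argument near $\bar\xi$ then forces $W(\bar\xi)=0$, hence $W\equiv0$ on $[\bar\xi,\infty)$, from which $\cV\equiv0$ and then $\cU\equiv1$ follow. The paper instead dispatches part~2) in one stroke using only the $\cU$-equation: from $c\,\cU'\ge \cU(\cU-1)$ (with equality iff $\cV=0$) and $\cU(\bar\xi)=1$, $\cU\le1$, a scalar comparison with the solution $\equiv1$ forces $\cU\equiv1$ and hence $\cV\equiv0$. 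The paper's argument is shorter and more elementary; yours is more elaborate but isolates a monotone Lyapunov-type quantity, which is a robust technique when no convenient scalar barrier presents itself. In particular, your remark that this step is ``genuinely delicate'' because uniqueness fails at the degeneracy is well taken in general, even though here a one-line barrier happens to be available.

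The only slip: in your last sentence, ``$\cU$ non-increasing with $\cU(\bar\xi)=1$ and $\cU\le1$'' does \emph{not} by itself force $\cU\equiv1$. You need either the boundary condition $\cU(+\infty)=1$ (non-increasing from $1$ to $1$ implies constant) or uniqueness for the ODE $c\,\cU'=-\cU f(\cU)$ with $\cU(\bar\xi)=1$. Either fix is immediate.
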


In other words, the propagation front $(\cU,\cV;c)$ is {\bf smooth} if
$\bar\xi = +\infty$, and {\bf sharp} if $\bar\xi < +\infty$. In fact, we shall eventually
prove that the latter case cannot occur for system~\eqref{Ode1}, but at the moment we have
to consider both possibilities.

\begin{proof}
Since $\cU,\cV$ are continuous functions and $c > 0$ by Lemma~\ref{lem:positivec}, we
deduce from \eqref{frontdef1} that $\cU \in C^1(\R)$ and $\cU$ is a classical solution of
the first ODE in \eqref{Ode1}. In particular $\cU$ cannot vanish without being identically
zero, which would contradict the assumption that $\cU_+ = 1$, hence $\cU(\xi) > 0$ for all
$\xi \in \R$.  On the other hand, we know from \eqref{BC1} that $\cU(\xi) < 1$ when $\xi$
is large and negative. Thus either $\cU(\xi) < 1$ for all $\xi \in \R$, in which case we
set $\bar\xi = +\infty$, or there exists a (unique) point $\bar\xi \in \R$ such that
$\cU(\bar\xi) = 1$ and $\cU(\xi) < 1$ for all $\xi < \bar \xi$.

According to \eqref{frontdef2}, on the interval $I := (-\infty,\bar \xi)$ the function
$\cV$ is a weak solution of an elliptic ODE, so that $\cV$ is of class $C^2$ and satisfies
the second ODE in \eqref{Ode1} in the classical sense.  In fact, using \eqref{Ode1} and a
bootstrap argument, it is easy to verify that $\cU,\cV \in C^\infty(I)$.  Moreover, since
$0 \le \cV \le 1$ and since the nonlinear term $\cV f(\cV)$ vanishes when $\cV = 0$ and
$\cV = 1$, it is clear that $\cV$ cannot vanish on $I$ without being identically zero,
which would contradict the assumption that $\cV_- = 1$. Similarly, if $1 - \cV$ vanishes
somewhere on $I$, then $\cV \equiv 1$ on $I$; if $\bar\xi = +\infty$, this contradicts the
assumption that $\cV_+ = 0$, and if $\bar\xi < +\infty$ this implies that
$\cU'(\bar\xi) = d/c > 0$, which is of course impossible since $0 \le \cU \le 1$. The
proof of 1) is thus complete.

It remains to prove 2), assuming of course that $\bar\xi < +\infty$.  Since
$\cV(\xi) \ge 0$ by assumption, the first ODE in \eqref{Ode1} shows that
$c\,\cU'(\xi) = \cU(\xi)\bigl(\cU(\xi) + d\cV(\xi) - 1\bigr) \ge \cU(\xi)\bigl(\cU(\xi) 
- 1\bigr)$, with equality if and only if $\cV(\xi) = 0$. Since $\cU(\bar\xi) = 1$ and 
$\cU(\xi) \le 1$ for all $\xi \ge \bar\xi$, the only possibility is that $\cU(\xi) = 1$
and $\cV(\xi) = 0$ for all $\xi\ge\bar\xi$. This concludes the proof.
\end{proof}

\subsection{Desingularization of the ODE system}
\label{subsec21}

Here, we concentrate on the regime $d>1$ and we look for propagation fronts with
asymptotic states \eqref{BC1bis}. We know from Lemma~\ref{lem:positivec} that $c>0$, and
from Lemma~\ref{lem:sharp} that the profiles $\bigl(\cU,\cV\bigr)$ satisfy \eqref{Ode1} in
the classical sense on the interval $I = (-\infty,\bar\xi)$ for some
$\bar\xi \in \R \cup\{+\infty\}$. The ODE system \eqref{Ode1} degenerates in the limit
where $\xi \to \bar\xi$, which complicates the analysis.  Fortunately, as in scalar
equations \cite{Engl85}, it is possible to desingularize \eqref{Ode1} using a relatively
simple change of variables.

Given a solution $\bigl(\cU,\cV\bigr) : I \to (0,1)^2$ of \eqref{Ode1} 
satisfying \eqref{BC1}, we define a new independent variable $y = \Phi(\xi)$ 
by setting
\begin{equation}\label{ydef}
  \frac{\D y}{\D \xi} \,\equiv\, \Phi'(\xi) \,=\, \frac{1}{1 - \cU(\xi)}\,, 
  \qquad \hbox{for all~} \xi \in I = (-\infty,\bar\xi)\,.
\end{equation}
Since $\cU \in C^1(\R)$ and $U(\xi) \to 1$ as $\xi \to \bar\xi$, it is clear that 
\[
   \int_{-\infty}^0 \frac{1}{1 - \cU(\xi)}\dd\xi \,=\, +\infty
   \qquad\hbox{and}\quad
   \int_0^{\bar\xi} \frac{1}{1 - \cU(\xi)}\dd\xi \,=\, +\infty\,,
\]
no matter whether $\bar\xi < +\infty$ or $\bar\xi = +\infty$. This shows that
$\Phi :I \to \R$ is a smooth diffeomorphism, so that we can introduce the new 
dependent variables $(u,v)$ defined by
\begin{equation}\label{uvdef}
  u(y) \,=\, \cU\bigl(\Phi^{-1}(y)\bigr)\,, \quad
  v(y) \,=\, \cV\bigl(\Phi^{-1}(y)\bigr)\,, \qquad y \in \R\,.
\end{equation}
Using \eqref{ydef}, \eqref{uvdef}, it is straightforward to verify that the functions 
$u,v$ are solution of the desingularized system
\begin{equation}\label{Ode2}
	\left\{\begin{aligned}
 	c\,\frac{\D u}{\D y} + u(1-u)(1-u - d v) \,&=\, 0\,,\\
  	\frac{\D^2 v}{\D y^2}  + c\,\frac{\D v}{\D y} + r v(1-u)(1-v) \,&=\, 0\,,
	\end{aligned}\right.
\end{equation}
which is considered on the whole real line. The boundary conditions are unchanged: 
\begin{equation}\label{BC3}
  \lim_{y \to -\infty}\bigl(u(y),v(y)\bigr) \,=\, (0,1)\,,
  \qquad 
  \lim_{y \to +\infty}\bigl(u(y),v(y)\bigr) \,=\, (1,0)\,.
\end{equation}
It is important to observe that the desingularized system \eqref{Ode2} has many more
equilibria than the original system \eqref{Ode1}.  Indeed, in addition to the trivial
state $(u,v) = (0,0)$ and the infected stated $(u,v) = (0,1)$, system \eqref{Ode2} has a
continuous family of equilibria of the form $(u,v) = (1,v_\infty)$ for arbitrary
$v_\infty \in \R$.  Except for the healthy state $(u,v) = (1,0)$, those equilibria are an
artifact of the change of variables \eqref{ydef} and do not correspond to physically
meaningful situations.

If we introduce the additional variable $w = {\D v}/{\D y}$, we obtain
from \eqref{Ode2} the first-order system
\begin{equation}\label{Ode3}
	\left\{\begin{aligned}
  	c\frac{\D u}{\D y} \,&=\,  -u(1-u)(1-u - d v)\,, \\
  	\frac{\D v}{\D y}\,&=\, w\,, \\
  	\frac{\D w}{\D y} \,&=\, -c w - rv(1-u)(1-v)\,,
	\end{aligned}\right.
\end{equation}
which is the starting point of our analysis.  In the following sections, we 
consider solutions of \eqref{Ode3} that lie in the region $\cD \subset \R^3$ 
defined by
\begin{equation}\label{Ddef}
  \cD \,=\, \bigl\{(u,v,w) \in \R^3\,\big|\, 0 < u < 1\,,~
  0 < v < 1\,,~ w < 0\bigr\}\,.
\end{equation}
Indeed, the constraints $0 < u,v < 1$ were established in Lemma~\ref{lem:sharp}, 
and we shall see below that all front profiles also satisfy $w < 0$. 

\subsection{The unstable manifold of the infected state}
\label{subsec22}

The linearization of system \eqref{Ode3} at the infected state $S_- = (0,1,0)$ is
\begin{equation}\label{linS-}
  c\frac{\D u}{\D y} \,=\, (d-1)\,u\,, \quad \frac{\D z}{\D y}  \,=\, - w\,, 
  \quad \frac{\D w}{\D y} \,=\,  -rz-cw\,,
\end{equation}
where $z = 1 -v$. The equilibrium $S_-$ is thus hyperbolic, with two
positive eigenvalues $\lambda,\mu$ given by \eqref{lamudef}, and one
negative eigenvalue $\zeta = -\frac12\bigl(c + \sqrt{c^2 + 4r}\bigr)$.
In view of \eqref{BC3}, we are are interested in solutions that lie on
the two-dimensional unstable manifold of $S_-$.  It is a
straightforward task to compute an asymptotic expansion of all such
solutions in a neighborhood of $S_-$, see e.g. \cite[Chapter~3]{GH}.
Keeping only the solutions that belong to the region $\cD$ near $S_-$,
we obtain the following representation:

\begin{lem}\label{lem:unstab} Fix $c > 0$. For any $\alpha > 0$, 
the ODE system \eqref{Ode3} has a unique solution such that
\begin{align}\nonumber
  u(y) \,&=\, \alpha\,e^{\mu y} + \cO\Bigl(e^{(\mu + \eta)y}
  \Bigr)\,, \\ \label{asym1}
  v(y) \,&=\, 1 - e^{\lambda y} + \cO\Bigl(e^{(\lambda + \eta)y}\Bigr)\,, \\
  \nonumber
  w(y) \,&=\, - \lambda\,e^{\lambda y} + \cO\Bigl(e^{(\lambda + \eta)y}\Bigr)\,,
\end{align}
as $y \to -\infty$, where $\lambda,\mu$ are given by \eqref{lamudef}
and $\eta = \min(\lambda,\mu) > 0$. 
\end{lem}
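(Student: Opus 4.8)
The statement is a routine but somewhat delicate application of the stable/unstable manifold theorem together with an explicit computation of the linear eigendata at $S_- = (0,1,0)$. The plan is to work in the shifted coordinate $z = 1-v$, in which system \eqref{Ode3} becomes $c\,u' = (d-1)u + N_1$, $z' = -w$, $w' = -rz - cw + N_2$, where $N_1, N_2$ collect the quadratic-and-higher terms in $(u,z,w)$. The linear part is block-triangular: the $u$-block has the single eigenvalue $\mu = (d-1)/c > 0$, and the $(z,w)$-block has characteristic polynomial $\sigma^2 + c\sigma + r = 0$, hence eigenvalues $\lambda = \tfrac12(-c+\sqrt{c^2+4r}) > 0$ and $\zeta = \tfrac12(-c-\sqrt{c^2+4r}) < 0$. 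So $S_-$ is hyperbolic with a two-dimensional unstable subspace spanned by the $u$-direction and by the $(z,w)$-eigenvector $(1,-\lambda)$ associated to $\lambda$.

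First I would invoke the unstable manifold theorem (e.g. \cite[Chapter~3]{GH}): there is a two-dimensional local invariant manifold $W^u_{\mathrm{loc}}(S_-)$, tangent at $S_-$ to the unstable subspace, and every solution on it decays to $S_-$ as $y \to -\infty$ like $e^{\mu y}$ and $e^{\lambda y}$ at leading order. I would then parametrize $W^u$ by the two unstable modes. The $u$-component, being governed by a scalar equation whose linear part has the \emph{largest} real part among slow directions only when $\mu \le \lambda$ is false, is nonetheless the natural first parameter: writing $u(y) = \alpha e^{\mu y}(1 + o(1))$ defines $\alpha \ge 0$, while the $(z,w)$-part carries the second parameter, normalized by requiring $z(y) = e^{\lambda y}(1+o(1))$, i.e. by fixing the sign and scale of the $\lambda$-mode. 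The claim singles out precisely the one-parameter family obtained by fixing the $\lambda$-mode coefficient to $1$ (this is the translation normalization, since replacing $y$ by $y + y_0$ multiplies $e^{\lambda y}$ by $e^{\lambda y_0}$ and $e^{\mu y}$ by $e^{\mu y_0}$; after fixing the $z$-mode coefficient the only remaining freedom is $\alpha$) — but here, because we also want solutions to enter the physical region $\cD$, we must check the signs: $u > 0$ near $S_-$ forces $\alpha > 0$ (the case $\alpha = 0$ gives the solution identically on the $v$-axis, excluded for fronts), $v < 1$ forces $z > 0$ hence the $\lambda$-coefficient positive, and then $w = z' $ with $z = e^{\lambda y}+\dots$ gives $w = -\lambda e^{\lambda y} + \dots < 0$, consistent with $\cD$.

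The uniqueness for each fixed $\alpha > 0$ is the point requiring the most care. Given the unstable manifold, a solution on it approaching $S_-$ is determined by its two ``initial mode amplitudes''; we have fixed the $\lambda$-mode amplitude to $1$ by the normalization implicit in \eqref{asym1}, and we fix the $\mu$-mode amplitude to $\alpha$. I would make this rigorous by a standard contraction/fixed-point argument on the integral equation for the tail: write the solution on $W^u$ as $X(y) = \alpha e^{\mu y} e_\mu + e^{\lambda y} e_\lambda + R(y)$ with $R(y) = o(e^{\eta y})$ actually $O(e^{(\mu+\eta)y} + e^{(\lambda+\eta)y})$, substitute into Duhamel's formula splitting off the linear flow, and verify that the map $R \mapsto$ (variation-of-constants integral of the nonlinearity) is a contraction on a suitable weighted space of functions on $(-\infty, y_0]$ for $y_0$ sufficiently negative. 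The quadratic nature of $N_1, N_2$ gives the required gain of $e^{\eta y}$ in the remainder. Matching the error exponents $\mu + \eta$ and $\lambda + \eta$ with $\eta = \min(\lambda,\mu)$ is exactly what the quadratic terms produce: $u \cdot v$-type products of $e^{\mu y}$ and $e^{\lambda y}$ contribute $e^{(\mu+\lambda)y}$, which is $\le \max(e^{(\mu+\eta)y}, e^{(\lambda+\eta)y})$, and the $w^2, z^2$ terms contribute $e^{2\lambda y}$. The main obstacle, then, is purely bookkeeping: organizing the error terms so that the claimed expansion \eqref{asym1} — and not merely ``$X(y) \to S_-$ exponentially'' — comes out with the stated remainder exponents. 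No genuinely new idea is needed beyond the standard local analysis of a hyperbolic equilibrium.
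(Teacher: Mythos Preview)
Your approach is correct and is precisely the standard unstable-manifold argument the paper has in mind: the paper does not actually prove this lemma, but simply states that ``it is a straightforward task to compute an asymptotic expansion of all such solutions in a neighborhood of $S_-$, see e.g.\ \cite[Chapter~3]{GH}'' and then records the result. Two inconsequential slips to fix: the characteristic polynomial of the $(z,w)$-block is $\sigma^2 + c\sigma - r = 0$ (not $+r$; your eigenvalues are nonetheless correct), and $w = -z'$ rather than $w = z'$ (again your conclusion $w = -\lambda e^{\lambda y} + \dots$ is right).
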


\begin{rem}\label{rem:allsol}
Up to translations in the variable $y \in \R$, Lemma~\ref{lem:unstab}
describes all solutions of \eqref{Ode3} that converge to $S_-$ as
$y \to -\infty$ and belong to the region $\cD$ for sufficiently large
$y < 0$. To prove Theorem~\ref{main1}, our strategy is to study the 
behavior of those solutions as a function of the {\em shooting parameter}
$\alpha > 0$ and the {\em speed parameter} $c > 0$. When needed, 
we denote by $(u_{\alpha,c},v_{\alpha,c},w_{\alpha,c})$ the unique solution
of \eqref{Ode3} satisfying \eqref{asym1}. Among other properties, 
we shall use the fact that, for any $y_0 \in \R$, the solution
$(u_{\alpha,c}(y),v_{\alpha,c}(y),w_{\alpha,c}(y))$ depends continuously on the 
shooting parameter $\alpha$, uniformly in $y \in (-\infty,y_0]$. 
\end{rem}

The solution of \eqref{Ode3} satisfying \eqref{asym1} is not 
necessarily globally defined. The following result clarifies
under which condition the solution is global and stays in the 
region \eqref{Ddef} for all $y \in \R$. 

\begin{lem}\label{lem:invariance}
If the solution $(u_{\alpha,c},v_{\alpha,c},w_{\alpha,c})$ is defined on 
some interval $J = (-\infty,y_0)$ and satisfies 
$v_{\alpha,c}(y) > 0$ for all $y \in J$, then $(u_{\alpha,c}(y),
v_{\alpha,c}(y),w_{\alpha,c}(y)) \in \cD$ for all $y \in J$. 
\end{lem}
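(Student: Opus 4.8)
The plan is to show that the region $\cD$ is ``forward invariant'' for the flow of \eqref{Ode3} once we know $v_{\alpha,c}$ stays positive, by checking the vector field on each piece of the boundary $\partial\cD$. By Lemma~\ref{lem:unstab}, the solution enters $\cD$ for $y$ near $-\infty$ (since $u\sim\alpha e^{\mu y}>0$, $v\sim 1-e^{\lambda y}\in(0,1)$, and $w\sim-\lambda e^{\lambda y}<0$), so it suffices to prove that the trajectory cannot exit $\cD$ through any face of the cube-with-$w<0$ region. The hypothesis rules out exit through $\{v=0\}$ by fiat, so there are three remaining faces to examine: $\{u=0\}$, $\{u=1\}$, and $\{w=0\}$. (The face $\{v=1\}$ is handled together with the $w$-sign analysis, as explained below.)

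First I would dispatch the two $u$-faces. On $\{u=0\}$ the first equation gives $c\,u' = 0$, so $u=0$ is invariant; since $u_{\alpha,c}>0$ for $y$ near $-\infty$ and $u$ solves a scalar ODE with Lipschitz right-hand side in the relevant region, $u$ cannot reach $0$ in finite ``time'' $y$. The same remark shows $u=1$ is invariant for the $u$-equation, so $u_{\alpha,c}(y)<1$ persists on all of $J$. The only genuinely delicate face is $\{w=0\}$: I want to show $w_{\alpha,c}<0$ throughout $J$. Suppose not, and let $y_1\in J$ be the first point with $w(y_1)=0$; then $w<0$ on $(-\infty,y_1)$, hence $v$ is strictly decreasing there, and in particular $0<v(y_1)<1$ (using the standing hypothesis $v>0$ and the fact that $v(y)\to 1$ only as $y\to-\infty$, so $v<1$ for all finite $y\in J$). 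From the third equation, $w'(y_1) = -c\,w(y_1) - r v(y_1)(1-u(y_1))(1-v(y_1)) = -r v(y_1)(1-u(y_1))(1-v(y_1)) < 0$, since $0<u(y_1)<1$ and $0<v(y_1)<1$. But $w(y)<0$ for $y<y_1$ and $w(y_1)=0$ forces $w'(y_1)\ge 0$, a contradiction. Hence $w_{\alpha,c}(y)<0$ for all $y\in J$.

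It remains to confirm $v_{\alpha,c}(y)<1$ for all $y\in J$, which I folded into the argument above: once $w<0$ on all of $J$, $v$ is strictly decreasing, and since $v(y)\to 1$ as $y\to-\infty$ we get $v(y)<1$ for every finite $y$. Combining the four sign facts---$0<u<1$, $0<v<1$ (the upper bound just derived, the lower bound by hypothesis), and $w<0$---gives $(u_{\alpha,c}(y),v_{\alpha,c}(y),w_{\alpha,c}(y))\in\cD$ for all $y\in J$, as claimed. The main obstacle is the $\{w=0\}$ face: it is the only boundary piece where invariance is not immediate from a Lipschitz/uniqueness argument, and it requires the ``first exit time'' argument together with the sign of $w'$ there; everything else is a short consequence of standard ODE uniqueness and the asymptotics in Lemma~\ref{lem:unstab}.
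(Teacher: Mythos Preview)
Your proof is correct and follows essentially the same approach as the paper: the invariance of the planes $\{u=0\}$ and $\{u=1\}$ handles the $u$-component, and the ``first exit time'' contradiction on $\{w=0\}$---using that $0<v(y_1)<1$ because $v$ has been strictly decreasing from its limit $1$ at $-\infty$, and then computing $w'(y_1)<0$---is exactly the paper's argument. Your framing in terms of boundary faces is a bit more explicit, but the substance is identical.
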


\begin{proof}
We denote $(u,v,w) = (u_{\alpha,c},v_{\alpha,c},w_{\alpha,c})$. 
Since the right-hand side of the first equation in \eqref{Ode3} vanishes 
when $u = 0$ and $u = 1$, it is clear that $0 < u(y) < 1$ for all 
$y \in J$. Next, assuming that $v(y)$ stays positive, we claim that
$w(y) < 0$ for all $y \in J$. Indeed, if this is not the case, we 
can find $y_1 < y_0$ such that $w(y_1) = 0$ and $w(y) < 0$ for all 
$y \in (-\infty,y_1)$. In particular, we have $0 < v(y) < 1$ for 
all $y \in (-\infty,y_1]$, and the last equation in \eqref{Ode3} 
shows that $w'(y_1) = -r(1-u(y_1))v(y_1)(1-v(y_1)) < 0$, which 
gives a contradiction. So $w(y) < 0$ for all $y \in J$, which 
implies that $0 < v(y) < 1$ for all $y \in J$. 
\end{proof}

For any $\alpha > 0$ and $c > 0$, we now define
\begin{equation}\label{Tdef}
  T(\alpha,c) \,=\, \sup\bigl\{y_0 \in \R\,\big|\,
  v_{\alpha,c}(y) > 0 \hbox{ for all } y < y_0\bigr\} \,\in\,
  (-\infty,+\infty]\,.
\end{equation}
According to Lemma~\ref{lem:invariance}, there are just two possibilities\:

\begin{itemize}

\item Either $T(\alpha,c) < +\infty$, in which case 
$v_{\alpha,c}(T(\alpha,c)) = 0$ and $w_{\alpha,c}(T(\alpha,c)) < 0$, 
so that $v_{\alpha,c}(y)$ becomes negative for some $y > T(\alpha,c)$. 
The corresponding value of the shooting parameter $\alpha$ must 
therefore be disregarded. 

\item Or $T(\alpha,c) = +\infty$, in which case the solution 
$(u_{\alpha,c},v_{\alpha,c},w_{\alpha,c})$ is global and stays in the 
region $\cD$ for all $y \in \R$. These are the solutions among 
which we want to find the traveling wave profiles satisfying
\eqref{BC3}. 

\end{itemize}

\subsection{Monotonicity with respect to the shooting parameter}
\label{subsec23}

A crucial observation is that the solutions of \eqref{Ode3} on 
the unstable manifold of $S_-$ are monotone functions of the 
shooting parameter $\alpha$. The precise statement is the 
following\:

\begin{lem}\label{lem:monotone} Fix $c > 0$. If $\alpha_2 > \alpha_1 > 0$, 
then $T(\alpha_2,c) \ge T(\alpha_1,c)$ and the solutions of 
\eqref{Ode3} defined by \eqref{asym1} satisfy
\begin{equation}\label{comp1} 
  u_{\alpha_2,c}(y) \,>\, u_{\alpha_1,c}(y)\,, \quad 
  v_{\alpha_2,c}(y) \,>\, v_{\alpha_1,c}(y)\,,  
\end{equation}
for all $y \in (-\infty,T(\alpha_1,c))$. 
\end{lem}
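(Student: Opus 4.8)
The plan is to exploit the structure of \eqref{Ode3}: the $u$-equation is a scalar ODE driven by $v$, while the $(v,w)$-equations form a second-order equation whose zeroth-order coefficient depends (monotonically) on $u$. First I would record the behavior near $S_-$: from the asymptotic expansion \eqref{asym1}, for $y$ sufficiently negative one has $u_{\alpha_2,c}(y) = \alpha_2 e^{\mu y}(1+o(1)) > \alpha_1 e^{\mu y}(1+o(1)) = u_{\alpha_1,c}(y)$, while $v_{\alpha_2,c}(y) - v_{\alpha_1,c}(y) = \cO(e^{(\lambda+\eta)y})$ is of higher order — so the $v$-comparison is not immediate at $-\infty$ and must be bootstrapped. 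The cleanest route is a continuation argument: let $y^\ast$ be the supremum of all $Y < T(\alpha_1,c)$ such that both inequalities in \eqref{comp1} hold on $(-\infty,Y)$; we know $y^\ast > -\infty$ from the $u$-expansion together with the fact that, once $u_{\alpha_2,c} > u_{\alpha_1,c}$, the $v$-difference is pushed in the right direction (see below). The goal is to show $y^\ast = T(\alpha_1,c)$ by deriving a contradiction from $y^\ast < T(\alpha_1,c)$.

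For the $u$-comparison I would argue as follows. Fix any point where $v_{\alpha_2,c} \ge v_{\alpha_1,c}$; then, writing $p = u_{\alpha_2,c}$, $q = u_{\alpha_1,c}$, subtracting the two copies of the first equation in \eqref{Ode3} gives $c(p-q)' = -\bigl[g(p) - g(q)\bigr] + d\,p\,v_{\alpha_2,c}(1-p) - d\,q\,v_{\alpha_1,c}(1-q)$ where $g(u) = u(1-u)(1-u)$, and using $v_{\alpha_2,c} \ge v_{\alpha_1,c}$ together with $0 < p,q < 1$ one checks that at a hypothetical first crossing point $p = q$ the right-hand side is $\ge 0$ with a sign strictly favorable to keeping $p > q$ — a standard differential-inequality/maximum-principle argument for the scalar linear ODE satisfied by $p - q$ (the coefficient of $p - q$ being bounded on the relevant interval). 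Since initially $p > q$ near $-\infty$, this shows $p > q$ persists as long as $v_{\alpha_2,c} \ge v_{\alpha_1,c}$.

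For the $v$-comparison, set $\delta = v_{\alpha_1,c} - v_{\alpha_2,c}$ and subtract the two copies of the second-order equation in \eqref{Ode2}: $\delta'' + c\,\delta' + r(1-u_{\alpha_1,c})(1-2\theta)\,\delta = r\,v_{\alpha_2,c}(1-v_{\alpha_2,c})\bigl(u_{\alpha_2,c} - u_{\alpha_1,c}\bigr)$ for some intermediate value $\theta \in (v_{\alpha_2,c}, v_{\alpha_1,c})$, where on the interval in question $u_{\alpha_2,c} > u_{\alpha_1,c}$ by the previous step, so the right-hand side is $\ge 0$. Together with $\delta \to 0$, $\delta' \to 0$ as $y \to -\infty$ and $\delta(-\infty^-) \le 0$ (from \eqref{asym1}, $\delta = \cO(e^{(\lambda+\eta)y})$ with the sign inherited from the next-order term — one must check this, and if the leading correction is ambiguous one instead runs the whole argument from a slightly positive base point where $u_{\alpha_2,c} > u_{\alpha_1,c}$ already forces $\delta < 0$), a comparison-principle argument for this linear second-order ODE with bounded coefficients yields $\delta < 0$, i.e.\ $v_{\alpha_2,c} > v_{\alpha_1,c}$, on all of $(-\infty, y^\ast]$. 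This closes the continuation: at $y^\ast$ both strict inequalities still hold and remain open, so $y^\ast$ cannot be strictly less than $T(\alpha_1,c)$; hence \eqref{comp1} holds on $(-\infty, T(\alpha_1,c))$. Finally, since $v_{\alpha_2,c} > v_{\alpha_1,c} > 0$ on that interval, the definition \eqref{Tdef} of $T$ and Lemma~\ref{lem:invariance} force $T(\alpha_2,c) \ge T(\alpha_1,c)$.

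The main obstacle I anticipate is the bookkeeping at $-\infty$: the $v$-difference vanishes to higher order than the $u$-difference in \eqref{asym1}, so one cannot simply start the comparison "at $y = -\infty$" for $v$. The fix is either to extract the sign of the next-order term in the expansion of $v_{\alpha,c}$ in $\alpha$ (differentiating \eqref{asym1} formally in $\alpha$ and solving the resulting linearized system, whose forcing is manifestly signed), or — more robustly — to establish $u_{\alpha_2,c} > u_{\alpha_1,c}$ first on a maximal initial interval using only the $u$-expansion, and then feed the signed forcing $u_{\alpha_2,c} - u_{\alpha_1,c} > 0$ into the $\delta$-equation to generate the strict sign of $\delta$ on that same interval, before running the mutual continuation. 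Everything else is a routine application of scalar maximum principles, using that all coefficients are bounded on $(-\infty, y_0]$ for $y_0 < T(\alpha_1,c)$ and that the solutions depend continuously on $\alpha$ uniformly on such intervals (Remark~\ref{rem:allsol}).
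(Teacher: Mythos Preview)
Your overall architecture (establish both inequalities near $-\infty$, then run a continuation/first-crossing argument) matches the paper's, and you have correctly identified the bookkeeping at $-\infty$ as the delicate point. The paper resolves that exactly by your ``option 1'': writing $\omega_i(y)=e^{-\lambda y}(1-v_i(y))$, deriving an integral equation for $\omega_1-\omega_2$, and reading off the leading term $\frac{r(\alpha_2-\alpha_1)}{\mu(\mu+\delta)}e^{\mu y}$, which gives both $v_{\alpha_2,c}>v_{\alpha_1,c}$ and $v_{\alpha_2,c}'>v_{\alpha_1,c}'$ at a base point $y_1$.

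There is, however, a genuine gap in your continuation step for $v$. First a minor sign slip: subtracting the two $v$-equations gives
\[
\delta''+c\,\delta'+r(1-u_{\alpha_1,c})(1-2\theta)\,\delta \;=\; -\,r\,v_{\alpha_2,c}(1-v_{\alpha_2,c})\bigl(u_{\alpha_2,c}-u_{\alpha_1,c}\bigr),
\]
so the forcing is $\le 0$, not $\ge 0$. More importantly, the zeroth-order coefficient $q(y)=r(1-u_{\alpha_1,c})(1-2\theta)$ is \emph{not} sign-definite: it is negative while the $v_i$ are close to $1$, but becomes positive once the $v_i$ drop below $1/2$. In that regime the elementary maximum principle you invoke fails; nothing prevents $\delta$ from having a negative local minimum (where $-q\delta>0$ can dominate the forcing) and then crossing zero. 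The paper circumvents this by working with the \emph{ratio} $\rho=v_{\alpha_2,c}/v_{\alpha_1,c}$, which satisfies
\[
\rho''+\Bigl(c+\tfrac{2v_{\alpha_1,c}'}{v_{\alpha_1,c}}\Bigr)\rho'-\bigl(\phi_1-\phi_2\bigr)\rho=0,\qquad \phi_i=r(1-u_{\alpha_i,c})(1-v_{\alpha_i,c}).
\]
On the interval where $u_{\alpha_1,c}<u_{\alpha_2,c}$ and $v_{\alpha_1,c}<v_{\alpha_2,c}$ one has $\phi_1>\phi_2>0$, so the coefficient of $\rho$ is strictly negative and $\rho$ cannot have a positive interior maximum; combined with $\rho(y_1)>1$ and $\rho'(y_1)>0$ this forces $\rho>1$ up to $y_2$. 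The ratio trick is precisely what converts the non-sign-definite potential in the difference equation into a sign-definite one. For the $u$-part the paper uses the integral representation via $h(u)=u/(1-u)$ rather than a crossing argument, which cleanly handles the borderline case where both inequalities would threaten to fail simultaneously.
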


\begin{proof}
Fix $\alpha_2 > \alpha_1 > 0$. In a first step, we show that
inequalities \eqref{comp1} hold for all sufficiently large $y < 0$. 
From \eqref{asym1} we already know that $u_{\alpha_2,c}(y) - u_{\alpha_1,c}(y) 
\approx (\alpha_2-\alpha_1)\,e^{\mu y}$ as $y \to -\infty$, which proves 
the first inequality in \eqref{comp1} in the asymptotic regime. 
To establish the second inequality, we need a higher order expansion 
of the solutions on the unstable manifold of $S_-$, which can be obtained
as follows. For $i = 1,2$, we denote $u_i = u_{\alpha_i,c}$, $v_i = 
v_{\alpha_i,c}$, and we introduce the functions $\omega_i$ defined by
$\omega_i(y) = e^{-\lambda y}(1 - v_i(y))$, for $y < 0$ sufficiently 
large. A direct calculation shows that 
\begin{equation*}
  \omega_i'' + \delta \omega_i' + r \omega_i F\bigl(u_i,e^{\lambda y}\omega_i\bigr) 
   \,=\, 0\,,\qquad i = 1,2\,,
\end{equation*}
where $\delta = \sqrt{c^2 + 4r}$ and $F(u,\tilde v) = u + (1-u)\tilde v$. In addition, 
according to \eqref{asym1}, we have $\omega_i(y) \to 1$ and $\omega_i'(y) \to 0$ 
as $y \to -\infty$. We now consider the difference $\omega = \omega_1 - \omega_2$, 
which satisfies the inhomogeneous equation
\begin{equation}\label{omeq} 
  \omega'' + \delta \omega' + r\omega G \,=\, rf\,,
\end{equation}
where $G = G(y) := u_2 + (1-u_2)e^{\lambda y}(\omega_1+\omega_2)$ and  $f = f(y) := 
\omega_1(u_2-u_1)(1-e^{\lambda y}\omega_1)$. Integrating \eqref{omeq} and using the fact 
the $\omega(y) \to 0$ and $\omega'(y) \to 0$ as $y \to -\infty$, we obtain the
integral equation
\begin{equation}\label{omeqint} 
  \omega(y) \,=\, \frac{r}{\delta}\int_{-\infty}^y \Bigl(1 - e^{-\delta(y-z)}\Bigr)
  \Bigl(f(z) - \omega(z)G(z)\Bigr) \dd z\,, 
\end{equation}
which can be used to compute iteratively an asymptotic expansion of $\omega(y)$ as 
$y \to -\infty$. Since $G(y) = \cO(e^{\eta y})$ as $y \to -\infty$, where $\eta = 
\min(\lambda,\mu) > 0$, the leading order term is simply obtained by setting 
$\omega = 0$ in the right-hand side of \eqref{omeqint}.
If we observe that
\begin{equation*} 
  f(y) \,=\, \omega_1(y) \bigl(u_2(y) - u_1(y)\bigr) \Bigl(1 - e^{\lambda y}
  \omega_1(y)\Bigr) \,=\, (\alpha_2-\alpha_1)e^{\mu y} + \cO\Bigl(e^{(\mu + \eta)y}\Bigr)\,,
\end{equation*}
we thus find
\begin{equation}\label{omexp}
  \omega(y) \,\equiv\, \omega_1(y) - \omega_2(y) \,=\, 
  \frac{r(\alpha_2 - \alpha_1)}{\mu(\mu+\delta)}\,e^{\mu y}
   + \cO\Bigl(e^{(\mu + \eta)y}\Bigr)\,, \qquad \hbox{as }y \to -\infty\,.
\end{equation}
Recalling that $\alpha_2 > \alpha_1$, we conclude that $\omega_1(y) > \omega_2(y)$ when
$y < 0$ is sufficiently large, which means that the second inequality in \eqref{comp1}
holds in the asymptotic regime. It also follows from the representation formula
\eqref{omeqint} that $\omega'(y) > 0$ when $y < 0$ is sufficiently large, and this in turn
implies that $v_2'(y) > v_1'(y)$ in that region. Summarizing, we have shown that there
exists $y_1 \in \R$ such that inequalities \eqref{comp1} hold for all
$y \in (-\infty,y_1]$. Moreover $v_{\alpha_2,c}'(y_1) > v_{\alpha_1,c}'(y_1)$.

In a second step, we prove that $T(\alpha_2,c) \ge T(\alpha_1,c)$ and that inequalities
\eqref{comp1} hold for all $y \in (-\infty, T(\alpha_1,c))$. Indeed, if this is not the
case, there exists $y_2 < \min\{T(\alpha_1,c),T(\alpha_2,c)\}$ such that both inequalities
in \eqref{comp1} hold on the interval $(y_1,y_2)$, but at least one becomes an equality at
$y = y_2$. Our strategy is to show that this is impossible. Denoting as before
$u_i = u_{\alpha_i,c}$ and $v_i = v_{\alpha_i,c}$ for $i = 1,2$, we observe that
\begin{equation*}
  v_i'' + c v_i' + \phi_i v_i \,=\, 0\,, \qquad
  \textrm{where}\quad \phi_i \,=\,  r(1-u_i)(1-v_i)\,.
\end{equation*}
As $0 < u_1(y) < u_2(y) < 1$ and $0 < v_1(y) < v_2(y) < 1$ for all 
$y \in [y_1,y_2)$, it is clear that $\phi_1(y) > \phi_2(y) > 0$ 
on that interval. We consider the ratio $\rho(y) = v_2(y)/v_1(y)$, 
which satisfies
\begin{equation}\label{rhoeq} 
  \rho''(y) + \left(c + \frac{2v_1'(y)}{v_1(y)}\right)\rho'(y) 
  - \bigl(\phi_1(y)-\phi_2(y)\bigr)\rho(y) \,=\, 0\,, \qquad y \in (y_1,y_2)\,.
\end{equation}
We know that $\rho(y_1) > 1$ and $\rho'(y_1) > 0$, because the point $y_1$ was chosen so
that $v_2(y_1) > v_1(y_1) > 0$ and $v_1'(y_1) < v_2'(y_1) < 0$.  On the other hand, the
differential equation \eqref{rhoeq} implies that the function $\rho$ cannot have a
positive local maximum on the interval $(y_1,y_2)$.  So we must have
$\rho(y) \ge \rho(y_1) > 1$ for all $y \in [y_1,y_2)$, and taking the limit $y \to y_2$ we
conclude that $\rho(y_2) = v_2(y_2)/v_1(y_2) > 1$.

To establish the first inequality in \eqref{comp1}, we observe that
$c u_i'(y) = \psi_i(y)\bigl(1 - u_i(y)\bigr)u_i(y)$ for all
$y \in [y_1,y_2]$, where $\psi_i = u_i + dv_i - 1$.  We thus have the
integral representation
\begin{equation*}
	h(u_i(y)) \,=\, h(u_i(y_1)) \,\exp\left(\frac{1}{c}\int_{y_1}^y 
	\psi_i(z)\dd z \right)\,, \qquad y \in [y_1,y_2]\,,\quad i = 1,2\,,
\end{equation*}
where $h(u) = u/(1-u)$.
In particular, 
\begin{equation*}
  	\frac{h(u_2(y_2))}{h(u_1(y_2))} \,=\, \frac{h(u_2(y_1))}{h(u_1(y_1))}
 	 \,\exp\left(\frac{1}{c}\int_{y_1}^{y_2} \bigl(\psi_2(z)-\psi_1(z)\bigr)
  	\dd z \right) \,>\, \frac{h(u_2(y_1))}{h(u_1(y_1))} \,>\, 1\,,
\end{equation*}
because $\psi_2(y) > \psi_1(y)$ on $(y_1,y_2)$. Thus $u_2(y_2) > u_1(y_2)$, 
so that both inequalities in \eqref{comp1} hold at $y = y_2$, which 
gives the desired contradiction. 
\end{proof}

We recall that the relevant values of the shooting parameter $\alpha > 0$ are those for
which $T(\alpha,c) = +\infty$. Since $T(\alpha,c)$ is a non-decreasing function of $\alpha$
by Lemma~\ref{lem:monotone}, the following definition is natural\:
\begin{equation}\label{alpha0def}
  \alpha_0(c) \,=\, \inf\bigl\{\alpha > 0 \,|\, T(\alpha,c) = +\infty
  \bigr\} \,\in\, [0,+\infty]\,.
\end{equation}
Two situations can occur, depending on the value of the speed 
parameter $c > 0$\:

\begin{lem}\label{lem:alpha0}
If $c \ge 2\sqrt{r}$, then $\alpha_0(c) = 0$. If $0 < c < 2\sqrt{r}$, 
then $0 < \alpha_0(c) < +\infty$. 
\end{lem}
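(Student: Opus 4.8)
The statement separates into two regimes, and I would handle them with rather different ideas.

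\emph{The regime $c \ge 2\sqrt r$.} Here the plan is to prove the stronger fact that $T(\alpha,c) = +\infty$ for \emph{every} $\alpha > 0$, which by \eqref{alpha0def} is exactly $\alpha_0(c) = 0$. Arguing by contradiction, suppose $T := T(\alpha,c) < +\infty$ and write $(u,v,w) = (u_{\alpha,c},v_{\alpha,c},w_{\alpha,c})$; then $v(T) = 0$ and $w(T) < 0$ by the dichotomy recorded after \eqref{Tdef}, and $0 < u,v < 1$ on $(-\infty,T)$. The substitution $\phi(y) := e^{cy/2}\,v(y)$ turns the second equation of \eqref{Ode2} into $\phi'' = \bigl(\tfrac{c^2}{4} - r(1-u)(1-v)\bigr)\phi$. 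Since $0 < (1-u)(1-v) < 1$ and $r \le c^2/4$ in this regime, the bracket is nonnegative, so $\phi$ is convex on $(-\infty,T]$; as $\phi(T) = 0$ and $\phi'(T) = e^{cT/2}w(T) < 0$, convexity forces $\phi'(y) \le \phi'(T) < 0$ for all $y \le T$, whence $\phi(y) \ge |\phi'(T)|\,(T-y) \to +\infty$ as $y \to -\infty$. This contradicts $\phi(y) = e^{cy/2}v(y) \to 0$ (recall $v \to 1$ at $-\infty$), which settles this case.

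\emph{The regime $0 < c < 2\sqrt r$, lower bound $\alpha_0(c) > 0$.} I would pass to the limit $\alpha \to 0^+$. The construction of Lemma~\ref{lem:unstab} still makes sense at $\alpha = 0$, producing $u \equiv 0$ together with the function $v_0$ solving the Fisher--KPP wave equation $v'' + cv' + rv(1-v) = 0$ with $v_0(y) = 1 - e^{\lambda y} + \cO(e^{2\lambda y})$ as $y \to -\infty$; moreover the continuous dependence of Remark~\ref{rem:allsol} should be uniform for $\alpha$ in compact subsets of $[0,+\infty)$. A standard phase-plane analysis of $(v_0,v_0')$ shows that the orbit leaves $(1,0)$ into $\{0<v<1,\,w<0\}$, cannot escape that region through $v=1$ or $w=0$, and has bounded $w$, so if $v_0$ stayed positive it would converge to $(0,0)$; but $(0,0)$ is a stable focus when $c < 2\sqrt r$, and orbits approaching a focus must spiral, so $v_0$ vanishes at a finite $y_0^\ast$ with $v_0'(y_0^\ast) < 0$. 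By the uniform continuous dependence, $v_{\alpha,c}$ then becomes negative near $y_0^\ast$ for all small $\alpha > 0$, hence $T(\alpha,c) < +\infty$ for such $\alpha$; since $\alpha \mapsto T(\alpha,c)$ is nondecreasing by Lemma~\ref{lem:monotone}, this gives $\alpha_0(c) > 0$.

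\emph{The regime $0 < c < 2\sqrt r$, upper bound $\alpha_0(c) < +\infty$.} This is the substantive part. I would again argue by contradiction, assuming $T(\alpha,c) < +\infty$ for all $\alpha$, and set $T_\infty := \lim_{\alpha\to\infty}T(\alpha,c)$. By Lemma~\ref{lem:monotone} the maps $\alpha \mapsto u_{\alpha,c}(y)$ and $\alpha \mapsto v_{\alpha,c}(y)$ are nondecreasing, with pointwise limits $u_\infty(y),v_\infty(y)\le 1$, and $z_{\alpha,c}:=1-v_{\alpha,c}$ is nonincreasing in $\alpha$. Integrating the identity $\tfrac{\D}{\D y}\log\tfrac{u}{1-u} = \mu + \tfrac1c\bigl(u - d(1-v)\bigr)$ from $-\infty$ and using the normalization in \eqref{asym1} gives
\[
  \frac{u_{\alpha,c}(y)}{1-u_{\alpha,c}(y)} \;=\; \alpha\,e^{\mu y}\exp\!\Bigl(\tfrac1c\!\int_{-\infty}^y (u_{\alpha,c}-d\,z_{\alpha,c})\dd s\Bigr)\;\ge\; \alpha\,e^{\mu y}\exp\!\Bigl(-\tfrac{d}{c}\!\int_{-\infty}^y z_{\alpha,c}\dd s\Bigr),
\]
and since $z_{\alpha,c} \le z_{\bar\alpha,c}$ for $\alpha \ge \bar\alpha$ on the domain of $z_{\bar\alpha,c}$, the last integral is bounded uniformly in $\alpha$ for fixed $y < T_\infty$; letting $\alpha \to \infty$ yields $u_\infty \equiv 1$ on $(-\infty,T_\infty)$. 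A compactness/monotonicity argument then shows $z_{\alpha,c} \to z_\infty$ in $C^1_{\mathrm{loc}}$, where $z_\infty \ge 0$ solves $z_\infty'' + cz_\infty' = r(1-u_\infty)z_\infty(1-z_\infty) = 0$ with $z_\infty \to 0$ at $-\infty$, forcing $z_\infty \equiv 0$; and using a uniform bound on $w_{\alpha,c}$ near $T_\infty$ (from $|w'| \le c|w| + r$) together with $z_{\alpha,c}(T(\alpha,c)) = 1$, one rules out $T_\infty < +\infty$, so $T_\infty = +\infty$. Hence, for $\alpha$ large, the orbit of \eqref{Ode3} passes, say at $y=0$, arbitrarily close to $(1,1,0)$, which lies on the line of equilibria $\{u=1,\,w=0\}$; the linearization of \eqref{Ode3} at $(1,v_\ast,0)$ has eigenvalues $-dv_\ast/c<0$, $-c<0$ and $0$ (the last tangent to the line), so this line is normally hyperbolic and attracting, and the orbit therefore converges to an equilibrium $(1,v_\infty,0)$ with $v_\infty$ close to $1$. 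Then $v_{\alpha,c} > 0$ on all of $\R$, i.e. $T(\alpha,c) = +\infty$ — contradicting the assumption — so $\alpha_0(c) < +\infty$. The convexity trick of the first regime and the Fisher--KPP overshoot of the second are routine; the genuine difficulty is precisely this uniform control as $\alpha \to +\infty$, namely that a large enough shooting parameter drives $u$ to the vicinity of $1$, and keeps it there, before $v$ has left a neighborhood of $1$, so that the orbit is captured by the attracting equilibrium line $\{u=1,\,w=0\}$; the auxiliary points (excluding $T_\infty < +\infty$ and verifying the normal hyperbolicity of that line) are standard but need careful setup.
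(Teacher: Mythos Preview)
Your proof is correct but diverges from the paper's in two of the three parts. For $c \ge 2\sqrt r$ the paper instead lets $\alpha \to 0^+$, observes that $(u_{\alpha,c},v_{\alpha,c}) \to (0,v)$ on compact sets with $v$ the positive Fisher--KPP front, and invokes the monotonicity of $v_{\alpha,c}$ in $\alpha$ from Lemma~\ref{lem:monotone}; your convexity argument via $\phi = e^{cy/2}v$ is more self-contained since it avoids the somewhat delicate limit at the boundary value $\alpha=0$. For the lower bound when $c < 2\sqrt r$ you essentially match the paper. For the upper bound, the paper takes a direct quantitative route: it translates by $y_0 = (\ln\alpha)/\mu$ so that the shifted triple converges to $(\chi,1,0)$ with $\chi$ solving $c\chi'=\chi(1-\chi)(d-1+\chi)$, and then uses explicit differential inequalities (with constants $\epsilon_0=(d-1)/(2d)$ and $K = 1+1/c+2r/d$) to trap $\hat v$ above $1-\epsilon_0$ for all later times. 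Your route --- monotone limits giving $u_\infty\equiv1$ and $v_\infty\equiv1$, ruling out $T_\infty<+\infty$ via the Lipschitz bound on $z$, and finally invoking normal hyperbolicity of the equilibrium line $\{u=1,\,w=0\}$ --- is more geometric and in fact anticipates the center-manifold argument the paper deploys later near the healthy state (proof of Lemma~\ref{lem:exist}). The trade-off is that the paper's approach is more elementary and yields explicit thresholds, while yours needs the auxiliary facts that $|w_{\alpha,c}|\le r/c$ uniformly (integrate $w'+cw\in[-r,0]$ from $-\infty$), hence $w_{\alpha,c}\to 0$ locally uniformly along with $v_{\alpha,c}\to 1$, and that a neighborhood of $(1,1,0)$ is positively invariant --- all standard, but worth stating explicitly.
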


\begin{proof}
If $c \ge 2\sqrt{r}$ ({\em strongly damped case}), we claim that $T(\alpha,c) = +\infty$
for all $\alpha > 0$, so that $\alpha_0(c) = 0$. Indeed, using the continuity properties
mentioned in Remark~\ref{rem:allsol}, it is easy to verify that, in the limit where
$\alpha \to 0$, the solution $(u_{\alpha,c},v_{\alpha,c})$ of \eqref{Ode2} given by
Lemma~\ref{lem:unstab} converges uniformly on compact intervals to $(0,v)$, where
$v : \R \to \R$ is the unique solution of the Fisher--KPP equation
\begin{equation}\label{FKPP}
  v'' + c v' + rv(1-v) \,=\, 0\,,
\end{equation}
normalized so that $e^{-\lambda y}(1 - v(y)) \to 1$ as $y \to -\infty$.  As is well known,
the Fisher--KPP front $v$ is positive when $c \ge 2\sqrt{r}$. Since $v_{\alpha,c}$ is an
increasing function of $\alpha$ by Lemma~\ref{lem:monotone}, we deduce that
$v_{\alpha,c}(y) > 0$ for all $y \in \R$ and all $\alpha > 0$, which means that
$T(\alpha,c) = +\infty$ for all $\alpha > 0$.

We next consider the opposite situation where $0 < c < 2\sqrt{r}$ ({\em weakly damped
  case}). In that case, the solution $v$ of the Fisher--KPP equation is no longer positive,
hence there exists $\bar y \in \R$ so that $v(\bar y) < 0$. By continuity, we have
$v_{\alpha,c}(\bar y) < 0$ when $\alpha > 0$ is sufficiently small, so that
$T(\alpha,c) < +\infty$ for all sufficiently small $\alpha > 0$. To conclude the proof, it
remains to show that $T(\alpha,c) = +\infty$ when $\alpha > 0$ is sufficiently large. It
is convenient here to define $y_0 = (\ln \alpha)/\mu \gg 1$ and to consider the shifted
quantities
\[
\begin{aligned}
  \hat u(y) \,&:=\, u_{\alpha,c}(y-y_0) \,=\, e^{\mu y} + 
  \cO\Bigl(e^{(\mu + \eta)y}\Bigr)\,, \\
  \hat v(y) \,&:=\, v_{\alpha,c}(y-y_0) \,=\, 1 - \beta\,e^{\lambda y} + 
  \cO\Bigl(e^{(\lambda + \eta)y}\Bigr)\,, \\
  \hat w(y) \,&:=\, w_{\alpha,c}(y-y_0) \,=\, - \beta\lambda\,e^{\lambda y} + 
  \cO\Bigl(e^{(\lambda + \eta)y}\Bigr)\,,
\end{aligned}
\qquad \hbox{as}\quad y \to -\infty\,,
\]
where $\beta = \alpha^{-\lambda/\mu} \to 0$ as $\alpha \to +\infty$.  On any interval of
the form $(-\infty,y_0]$, these functions converge uniformly to $(\chi,1,0)$ as
$\beta \to 0$, where $\chi$ is the unique solution of the differential equation
\begin{equation*}
  c\chi' \,=\, u(1-u)\bigl(d-1 + u\bigr)\,,
\end{equation*}
normalized so that $\chi(y) = e^{\mu y} + \cO(e^{2\mu y})$ as $y \to -\infty$.  It is
clear that $\chi$ is increasing and converges to $1$ as $y \to +\infty$. Given any small
$\epsilon > 0$, we can therefore choose $y_1 > 0$ large enough and $\beta > 0$ small
enough so that $\hat u(y_1) \ge 1-\epsilon$, as well as $\hat v(y) \ge 1-\epsilon$ and
$\hat v'(y) \ge -\epsilon$ for all $y \le y_1$. In the rest of the proof, we choose
$\epsilon = \epsilon_0/K$ where
\begin{equation}\label{Kdef}
  \epsilon_0 \,=\, \frac{d-1}{2d} \,\in\, \Bigl(0\,,\frac12\Bigr)\,,
  \qquad \hbox{and}\qquad K \,=\, 1 + \frac{1}{c} + \frac{2r}{d}\,>\, 1\,.
\end{equation}

Under these assumptions, we claim that $\hat v(y) \ge 1 - \epsilon_0$ for all $y \ge y_1$,
which implies that $T(\alpha,c) = +\infty$.  Indeed, as long as
$\hat v(y) \ge 1 - \epsilon_0$, the function $\hat u$ satisfies
\[
  \hat u'(y) \,=\, \frac{1}{c}\,\hat u(y)\bigl(1-\hat u(y)\bigr)
  \bigl(d\hat v(y) - 1 + \hat u(y)\bigr) \,\ge\, 
  \frac{\mu}{2}\,\hat u(y)\bigl(1-\hat u(y)\bigr)\,,
\]
because $d\hat v - 1 \ge d-1-d\epsilon_0 = d\epsilon_0 = c\mu/2$.  Integrating that
inequality for $y \ge y_1$ and recalling that $\hat u(y_1) \ge 1-\epsilon$, we obtain
\begin{equation}\label{hatubd}
  \frac{\hat u(y)}{1-\hat u(y)} \,\ge\, \frac{\hat u(y_1)}{1-\hat u(y_1)}
  \,e^{\mu(y-y_1)/2} \,\ge\, \frac{1-\epsilon}{\epsilon}\,e^{\mu(y-y_1)/2}\,,
\end{equation}
which shows that $1 - \hat u(y) \le 2\epsilon \,e^{-\mu(y-y_1)/2}$ as long as
$\hat v(y) \ge 1 - \epsilon_0$. Under that hypothesis, the 
function $\hat v$ satisfies a differential inequality of the form
$\hat v''(y) + c\hat v'(y) + \delta\,e^{-\gamma (y-y_1)} \ge 0$, where
$\delta = 2r\epsilon \epsilon_0$ and $\gamma = \mu/2$. Integrating
that inequality for $y \ge y_1$ and assuming for simplicity that
$\gamma \neq c$, we obtain
\[
  \hat v'(y) \,\ge\, \hat v'(y_1)\,e^{-c(y-y_1)} - \frac{\delta}{c-\gamma}
  \,\Bigl(e^{-\gamma (y-y_1)} - e^{-c(y-y_1)}\Bigr)\,,
\]
hence, recalling that $\hat v(y_1) \ge 1-\epsilon$ and $\hat v'(y_1) \ge 
-\epsilon$\: 
\begin{align}\nonumber
  \hat v(y) \,&\ge\, \hat v(y_1) + \frac{1 - e^{-c(y-y_1)}}{c}\,\hat v'(y_1)
  -\frac{\delta}{c-\gamma}\int_{y_1}^y \Bigl(e^{-\gamma (z-y_1)} - e^{-c(z-y_1)}
  \Bigr)\dd z \\ \label{hatvbd}
  \,&>\, \hat v(y_1) + \frac{1}{c}\,\hat v'(y_1) - \frac{\delta}{c\gamma}
  \,\ge\, 1 - \epsilon - \frac{\epsilon}{c} - \frac{4r\epsilon\epsilon_0}{
  c\mu} \,=\, 1 - K\epsilon\,,
\end{align}
where $K$ is defined in \eqref{Kdef}. Summarizing, inequalities 
\eqref{hatubd} and \eqref{hatvbd} together imply that the lower bound 
$\hat v(y) \ge 1-\epsilon_0$ holds in fact for all $y \ge y_1$,
so that $T(\alpha,c) = + \infty$ if $\alpha > 0$ is large enough. 
\end{proof}

\begin{rem}\label{rem:alpha0}
If $0 < c < 2\sqrt{r}$, then $T(\alpha_0(c),c) = +\infty$. Indeed, if this was not the
case, the solution $v_{\alpha_0(c),c}$ of \eqref{Ode2} defined in Lemma~\ref{lem:unstab}
would cross the origin (with a negative slope) at point
$\bar y = T(\alpha_0(c),c) < +\infty$. By continuity, $v_{\alpha,c}$ would also change
sign near $\bar y$ if $\alpha > \alpha_0(c)$ and $\alpha$ is sufficiently close to
$\alpha_0(c)$. Thus $T(\alpha,c) < +\infty$ for some $\alpha > \alpha_0(c)$, which
contradicts the definition of $\alpha_0(c)$. Summarizing, it follows from
Lemma~\ref{lem:alpha0} that $T(\alpha,c) = +\infty$ for all $\alpha > 0$ when
$c \ge 2\sqrt{r}$, and $T(\alpha,c) = +\infty$ if and only if $\alpha \ge \alpha_0(c)$
when $0 < c < 2\sqrt{r}$.
\end{rem}

\subsection{Asymptotic behavior as $y \to +\infty$}
\label{subsec24}

Using the results obtained so far, we now show that the solutions of \eqref{Ode2} on the
unstable manifold of $S_-$, when they stay in the region defined by \eqref{Ddef}, are
eventually monotone and converge therefore to finite limits as $y \to +\infty$.

\begin{lem}\label{lem:limits}
If $T(\alpha,c) = +\infty$, the following limits exist\:
\begin{equation}\label{uvlimits}
  u_\infty(\alpha,c) \,=\, \lim_{y \to +\infty} u_{\alpha,c}(y) 
  \in \{0,1\}\,, \qquad
  v_\infty(\alpha,c) \,=\, \lim_{y \to +\infty} v_{\alpha,c}(y) 
  \in [0,1)\,.
\end{equation}
Moreover, if $u_\infty(\alpha,c) = 0$, then $v_\infty(\alpha,c) = 0$. 
\end{lem}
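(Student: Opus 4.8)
The plan is to exploit the monotonicity structure of the scalar $u$-equation together with the parabolic-type second-order equation for $v$, restricted to the region $\cD$ where we already know $0 < u,v < 1$ and $w < 0$. Since $T(\alpha,c) = +\infty$, Lemma~\ref{lem:invariance} gives that the solution stays in $\cD$ for all $y \in \R$; in particular $v$ is strictly decreasing (because $w = v' < 0$) and bounded below by $0$, so $v_\infty(\alpha,c) := \lim_{y\to+\infty} v_{\alpha,c}(y)$ exists and lies in $[0,1)$. The bound $v_\infty < 1$ is immediate because $v$ is strictly decreasing and $v(-\infty) = 1$.

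For the $u$-component, I would look at the sign of the factor $1 - u - dv$ in the first equation of \eqref{Ode3}. Since $v$ is decreasing, once $v$ is small enough this factor becomes positive and $u$ becomes increasing; more carefully, I would argue that $u$ is eventually monotone. The cleanest route: consider the two cases according to whether $u$ stays bounded away from $1$ or not. If $u$ is eventually increasing and bounded by $1$, it has a limit $u_\infty \in (0,1]$. If instead $u$ oscillates or decreases, I would use that at any interior critical point of $u$ one has $1 - u - dv = 0$, and differentiating shows the critical point is a strict local minimum when $v' < 0$ there (a standard argument: $u'' $ inherits the sign of $-d v' = -dw > 0$ at such a point), so $u$ can have at most one critical point and is therefore eventually monotone. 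Hence $u_\infty(\alpha,c) := \lim_{y\to+\infty} u_{\alpha,c}(y)$ exists in $(0,1]$.

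It remains to identify the limits. Passing to the limit $y \to +\infty$ in the integrated form of the equations (or invoking a Barb\u alat-type argument, since $u', v', w'$ are integrable or tend to zero along the flow), the limit point $(u_\infty, v_\infty, 0)$ must be an equilibrium of \eqref{Ode3}: that is, $u_\infty(1-u_\infty)(1-u_\infty - dv_\infty) = 0$ and $r v_\infty(1-u_\infty)(1-v_\infty) = 0$. Since $u_\infty > 0$, the first relation forces $u_\infty = 1$ or $1 - u_\infty = dv_\infty$. If $u_\infty \in (0,1)$, then $v_\infty > 0$ and the second relation (with $v_\infty < 1$) forces $1 - u_\infty = 0$, a contradiction; so $u_\infty \in \{0,1\}$. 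Finally, if $u_\infty = 0$, the equilibrium condition $r v_\infty (1-0)(1-v_\infty) = 0$ together with $v_\infty < 1$ gives $v_\infty = 0$, which is the last assertion. The main obstacle is making the ``eventually monotone'' claims for $u$ (and the passage to an equilibrium) fully rigorous without assuming more regularity than we have; I expect the critical-point argument above, combined with the a priori bounds from $\cD$ and the structure of \eqref{Ode3}, to close this gap cleanly.
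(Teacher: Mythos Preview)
Your approach is essentially the same as the paper's: both arguments hinge on the observation that $u$ has at most one critical point (equivalently, that $u+dv-1$ changes sign at most once), after which the limit $(u_\infty,v_\infty,0)$ is identified as an equilibrium of \eqref{Ode3}. The paper phrases this by tracking the quantity $u+dv-1$ directly: at a zero of this quantity one has $(u+dv)'=dv'<0$, so it crosses zero at most once and $u$ is monotone on each side.

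There is, however, a sign slip in your second-derivative computation. Differentiating $cu'=-u(1-u)(1-u-dv)$ at a point where $1-u-dv=0$ (hence $u'=0$) gives
\[
  cu'' \,=\, -u(1-u)\,\frac{\D}{\D y}(1-u-dv) \,=\, u(1-u)\,d\,v' \,=\, u(1-u)\,d\,w \,<\, 0\,,
\]
so the critical point is a strict local \emph{maximum}, not a minimum. This does not damage your conclusion (between two strict maxima there would have to be a minimum, which is impossible), but it does mean your assertion ``$u_\infty \in (0,1]$'' is unjustified: in the eventually-decreasing case the limit lies in $[0,1)$. Fortunately your equilibrium analysis at the end handles the case $u_\infty=0$ correctly anyway, so the argument closes once the sign and the stated range are corrected.
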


\begin{proof}
Assume that $\alpha > 0$ and $c > 0$ are such that $T(\alpha,c) =
+\infty$, which means that the solution $(u,v,w) = (u_{\alpha,c},
v_{\alpha,c},w_{\alpha,c})$ of \eqref{Ode3} is global and stays in the 
region $\cD$ for all $y \in \R$. In particular, we have $0 < v(y) < 1$ and
$v'(y) = w(y) < 0$ for all $y \in \R$, which proves the existence
of the second limit in \eqref{uvlimits}. As for the function
$u$, there are two possibilities\:

\begin{itemize}

\item Either $u(y) + dv(y) > 1$ for all $y \in \R$, in which case the first equation in
  \eqref{Ode3} shows that $u'(y) > 0$ for all $y \in \R$. As $0 < u(y) < 1$, we deduce
  that $u(y)$ converges to some limit $u_\infty \in (0,1]$. Actually, since
  $(u_\infty,v_\infty)$ must be an equilibrium of \eqref{Ode3}, we necessarily have
  $u_\infty = 1$.

\item Or there exists $\bar y \in \R$ such that $u(\bar y) + dv(\bar y) = 1$. In that
  case, by the first equation in \eqref{Ode3}, we have $u'(\bar y) = 0 < - dv'(\bar y)$,
  and this implies that $u(y) + dv(y) < 1$ for all $y > \bar y$. The same argument also
  shows that $u(y) + dv(y) > 1$ for all $y < \bar y$, in agreement with
  \eqref{asym1}. Thus we conclude that $u'(y) > 0$ for all $y < \bar y$, and $u'(y) < 0$
  for all $y > \bar y$. In particular $u(y)$ converges to some limit $u_\infty \in [0,1)$
  as $y \to +\infty$, and we must have $u_\infty = v_\infty = 0$ since
  $(u_\infty,v_\infty)$ is an equilibrium of \eqref{Ode3}.

\end{itemize}
The proof of \eqref{uvlimits} is thus complete. 
\end{proof}

It is clear from Lemma~\ref{lem:monotone} that both limits $u_\infty$, $v_\infty$ in
\eqref{uvlimits} are non-decreasing functions of the shooting parameter $\alpha > 0$. Also,
the proof of Lemma~\ref{lem:alpha0} shows that, if $\alpha > 0$ is sufficiently large
(depending on $c$), we necessarily have $u_\infty = 1$ and $v_\infty > 0$.  This leads to
the following definition\:
\begin{equation}\label{alpha1def}
  \alpha_1(c) \,=\, \inf\bigl\{\alpha > \alpha_0(c) \,|\,
  u_\infty(\alpha,c) = 1\bigr\}\,.
\end{equation}

\begin{lem}\label{lem:alpha1}
For any $c > 0$ we have $0 < \alpha_1(c) < \infty$. Moreover
$\alpha_1(c) = \alpha_0(c)$ if $0 < c < 2\sqrt{r}$.
\end{lem}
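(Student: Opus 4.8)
The plan is to exploit the monotonicity of $\alpha \mapsto u_\infty(\alpha,c)$, which is non-decreasing and $\{0,1\}$-valued by Lemmas~\ref{lem:monotone} and \ref{lem:limits}, together with two asymptotic analyses already available: the Fisher--KPP limit $\alpha \to 0$ from the proof of Lemma~\ref{lem:alpha0}, and a Sturm oscillation argument valid when $c < 2\sqrt{r}$. Finiteness $\alpha_1(c) < \infty$ is essentially contained in Lemma~\ref{lem:alpha0}: its proof shows that $1 - \hat u(y) \to 0$, hence $u_\infty(\alpha,c) = 1$, as soon as $\alpha$ is large enough, so the set in \eqref{alpha1def} is nonempty. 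When $0 < c < 2\sqrt{r}$, positivity is immediate from the definition, since $\alpha_1(c) \ge \alpha_0(c) > 0$ by Lemma~\ref{lem:alpha0}.

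It remains to treat positivity in the strongly damped case $c \ge 2\sqrt{r}$, where $\alpha_0(c) = 0$. Here I would show that $u_\infty(\alpha,c) = 0$ for all sufficiently small $\alpha > 0$; by monotonicity of $u_\infty$ in $\alpha$ this forces $\alpha_1(c) > 0$. As recalled in the proof of Lemma~\ref{lem:alpha0}, when $\alpha \to 0$ the solution $(u_{\alpha,c},v_{\alpha,c})$ of \eqref{Ode2} converges, uniformly on compact sets, to $(0,v)$, where $v$ is the Fisher--KPP front \eqref{FKPP}, which is positive and decreases from $1$ to $0$. Fix $M$ with $d\,v(M) < 1$; then $u_{\alpha,c}(M) + d\,v_{\alpha,c}(M) < 1$ for all small $\alpha > 0$, whereas $u_{\alpha,c}(y) + d\,v_{\alpha,c}(y) \to d > 1$ as $y \to -\infty$ by \eqref{asym1}. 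By continuity there is $\bar y < M$ with $u_{\alpha,c}(\bar y) + d\,v_{\alpha,c}(\bar y) = 1$, and the second case in the proof of Lemma~\ref{lem:limits} then gives $u_\infty(\alpha,c) = 0$.

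Finally, assume $0 < c < 2\sqrt{r}$; since $\alpha_1(c) \ge \alpha_0(c)$ by definition, the equality $\alpha_1(c) = \alpha_0(c)$ follows once we show $u_\infty(\alpha,c) = 1$ for every $\alpha > \alpha_0(c)$ (such solutions are global and lie in $\cD$ by Remark~\ref{rem:alpha0}). Suppose instead that $u_\infty(\alpha,c) = 0$ for some such $\alpha$. By Lemma~\ref{lem:limits} we also have $v_\infty(\alpha,c) = 0$, so $v := v_{\alpha,c}$ is a strictly positive $C^2$ solution of $v'' + c v' + q(y) v = 0$ on $\R$, with $q(y) := r(1 - u_{\alpha,c}(y))(1 - v_{\alpha,c}(y)) \to r$ as $y \to +\infty$. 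Writing $v = e^{-cy/2}\psi$ turns this into $\psi'' + (q(y) - c^2/4)\psi = 0$ with $\psi > 0$; but $q(y) - c^2/4 \to r - c^2/4 > 0$, so for $Y$ large one has $q - c^2/4 \ge \delta := \tfrac12(r - c^2/4)$ on $[Y,\infty)$, and Sturm's comparison theorem forces $\psi$ (hence $v$) to vanish somewhere in $(Y, Y + \pi/\sqrt{\delta})$, a contradiction. Thus $u_\infty(\alpha,c) = 1$ for all $\alpha > \alpha_0(c)$, so $\alpha_1(c) = \alpha_0(c) \in (0,\infty)$ by Lemma~\ref{lem:alpha0}.

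The crux of the argument, and the only genuinely new ingredient, is this last step: in the weakly damped regime $c < 2\sqrt{r}$ the damped linear equation near the healthy state has a focus rather than a node, so a convergence $u_\infty = v_\infty = 0$ is incompatible with positivity of $v$; everything else recycles estimates already obtained for Lemmas~\ref{lem:alpha0} and \ref{lem:limits}.
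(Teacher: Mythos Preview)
Your proof is correct and follows essentially the same route as the paper's: finiteness from the large-$\alpha$ analysis in Lemma~\ref{lem:alpha0}, positivity in the strongly damped case from the Fisher--KPP limit $\alpha \to 0$ combined with the dichotomy in the proof of Lemma~\ref{lem:limits}, and a Sturm oscillation argument in the weakly damped case. The only cosmetic difference is that you perform the Liouville substitution $v = e^{-cy/2}\psi$ before invoking Sturm, whereas the paper compares directly with the constant-coefficient equation $v'' + cv' + r(1-\epsilon)^2 v = 0$; one small terminological slip in your closing paragraph is that the relevant limiting equilibrium when $u_\infty = v_\infty = 0$ is the trivial state $(0,0,0)$, not the healthy state $S_+$.
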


\begin{proof}
Fix $c > 0$. We already observed that $u_\infty(\alpha,c) = 1$ when 
$\alpha > 0$ is sufficiently large, so that $\alpha_1(c) < +\infty$. 
If $c \ge 2\sqrt{r}$, so that $\alpha_0(c) = 0$ by Lemma~\ref{lem:alpha0},
we recall that the solution $(u_{\alpha,c},v_{\alpha,c})$ of \eqref{Ode2}
converges uniformly on compact sets to $(0,v)$ as $\alpha \to 0$,
where $v$ is the Fisher--KPP front. Since $v(y) \to 0$ as $y \to
+\infty$, we can choose $y \in \R$ so that $v(y) < 1/d$. If
$\alpha > 0$ is sufficiently small, we thus have $u_{\alpha,c}(y)
+ dv_{\alpha,c}(y) < 1$, and the proof of Lemma~\ref{lem:limits}
then shows that $u_\infty(\alpha,c) = 0$. Thus $\alpha_1(c) > 0$.

It remains to show that $\alpha_1(c) = \alpha_0(c)$ when $0 < c < 2\sqrt{r}$.  Indeed, if
$\alpha_1(c) > \alpha_0(c)$, we can take $\alpha \in (\alpha_0(c),\alpha_1(c))$ so that
the corresponding solution $(u,v,w) = (u_{\alpha,c},v_{\alpha,c},w_{\alpha,c})$ of
\eqref{Ode3} stays in $\cD$ for all $y \in \R$. Moreover $u_\infty = v_\infty = 0$ since
$\alpha < \alpha_1(c)$. We choose $\epsilon > 0$ small enough so that
$c < 2\sqrt{r}(1-\epsilon)$, and $\bar y > 0$ large enough so that $u(y) < \epsilon$ and
$v(y) < \epsilon$ for all $y \ge \bar y$. All solutions of the constant coefficient ODE
\[
  v''(y) + cv'(y) + r(1-\epsilon)^2 v(y) \,=\, 0\,, \qquad
  y \in \R\,,
\]
have infinitely many zeros in the interval $(\bar y,+\infty)$, and Sturm's comparison
theorem asserts that the function $v_{\alpha,c}$, which satisfies the second equation in
\eqref{Ode2} where $(1-u)(1-v) > (1-\epsilon)^2$, has a fortiori infinitely many zeros in
that interval, see e.g. \cite[Chapter~8]{CL}. This of course contradicts the assumption
that $\alpha > \alpha_0(c)$.
\end{proof}

\begin{rem}\label{rem:alpha1}
It follows from Remark~\ref{rem:alpha0} that $T(\alpha_1(c),c) = +\infty$ 
for any $c > 0$. It is also easy to verify that $u_\infty(\alpha_1(c),c) = 1$. 
Indeed, if this is not the case, we have $u_\infty(\alpha,c) = v_\infty(\alpha,c) 
= 0$ by Lemma~\ref{lem:limits}, where $\alpha = \alpha_1(c)$, hence we
can take $\bar y \in \R$ large enough so that $u_{\alpha,c}(\bar y) + 
d v_{\alpha,c}(\bar y) < 1$. By continuity, we then have $u_{\alpha',c}(\bar y) +
d v_{\alpha',c}(\bar y) < 1$ for any $\alpha'$ sufficiently close to $\alpha$, so 
that $u_\infty(\alpha',c) = 0$ for some $\alpha' > \alpha_1(c)$, in contradiction 
with the definition of $\alpha_1(c)$. It is more difficult to prove
that $v_\infty(\alpha_1(c),c) = 0$; this is precisely the purpose
of the next section. 
\end{rem}

\subsection{The center manifold of the healthy state}
\label{subsec25}

Given any $c > 0$, we assume from now on that $\alpha \ge \alpha_1(c)$. 
In that case, we know from Remark~\ref{rem:alpha1} that $T(\alpha,c) = 
+\infty$ and that the solution  $(u_{\alpha,c},v_{\alpha,c}, w_{\alpha,c})$ 
of \eqref{Ode3} given by Lemma~\ref{lem:unstab} converges to 
$\bigl(1,v_\infty(\alpha,c),0\bigr)$ as $y \to + \infty$, 
where $0 \le v_\infty(\alpha,c) < 1$. Our goal is to determine
for which value(s) of $\alpha$ we have $v_\infty(\alpha,c) = 0$, 
so that the boundary conditions \eqref{BC3} are satisfied. 

To study the dynamics of the ODE system \eqref{Ode3} in a 
neighborhood of the healthy equilibrium $S_+ = (1,0,0)$, 
we introduce the new dependent variables 
\begin{equation}\label{tildeuvw}
  \tilde u(y) \,=\, 1 - u(y)\,, \qquad \tilde v(y) 
  \,=\, v(y) + w(y)/c\,, \qquad \tilde w(y) = w(y)/c\,,
\end{equation}
which satisfy the modified system
\begin{align}\nonumber
  \tilde u' \,&=\,  \frac{1}{c}\,\tilde u(1-\tilde u)(\tilde u - d\tilde v
    + d\tilde w)\,, \\ \label{Ode4}
  \tilde v' \,&=\, -\frac{r}{c}\,\tilde u(\tilde v -\tilde w)(1-
    \tilde v + \tilde w)\,, \\ \nonumber
  \tilde w' \,&=\, -c \tilde w - \frac{r}{c}\,\tilde u(\tilde v -\tilde w)
  (1-\tilde v + \tilde w)\,.
\end{align}
It is clear that $(\tilde u,\tilde v,\tilde w) = (0,v_\infty,0)$ is an equilibrium of
\eqref{Ode4} for any $v_\infty \in \R$, and that the healthy state $S_+$ corresponds to
$v_\infty = 0$. The linearization of \eqref{Ode4} at the origin is easily found to be
$\tilde u' = 0$, $\tilde v' = 0$, $\tilde w' = -c\tilde w$. It follows that all solutions
of \eqref{Ode4} that stay in a small neighborhood of the origin for all sufficiently large
$y > 0$ converge as $y \to +\infty$ to a two-dimensional center manifold
$\cW \subset \R^3$, which is tangent at the origin to the subspace spanned by the vectors
$(1,0,0)$ and $(0,1,0)$. For any $k \in \N$, the center manifold is locally the graph of a
$C^k$ function $\cF$, so that $(\tilde u,\tilde v,\tilde w) \in \cW$ if and only if
$\tilde w = \cF(\tilde u,\tilde v)$ where
\begin{equation}\label{cFdef}
  \cF(\tilde u,\tilde v) \,=\,
  -\frac{r}{c^2}\,\tilde u\tilde v \Bigl(1 +
  \cO(|\tilde u| + |\tilde v|)\Bigr)\,,
  \qquad \hbox{as} \quad (\tilde u,\tilde v) \to
  (0,0)\,.
\end{equation}
We recall that the center manifold $\cW$ and the associated function $\cF$ are not
necessarily unique, but the asymptotic expansion in \eqref{cFdef} is free of ambiguity,
see e.g. \cite[Chapter~3]{GH}.  Moreover, any center manifold $\cW$ necessarily contains
the equilibria $(0,v_\infty,0)$ for sufficiently small values of $v_\infty$. The
derivation of \eqref{cFdef} is standard, see \cite{CLW,GH,HI} for the methodology and
several examples. We just observe here that $\cF(0,\tilde v) = 0$ because the dynamics of
\eqref{Ode4} is trivial when $\tilde u = 0$, and that $\cF(\tilde u,0) = 0$ because the
subspace defined by $\tilde v = \tilde w = 0$ is invariant under the evolution defined by
\eqref{Ode4}.

Since we are interested in solutions of \eqref{Ode3} that stay in the region $\cD$ defined
by \eqref{Ddef}, it is natural to consider solutions of \eqref{Ode4} on the smaller
manifold
\[
  \cW_+ \,=\, \bigl\{(\tilde u,\tilde v,\tilde w) \in 
  \cW \,\big|\, \tilde u > 0\,,~\tilde v > 0\bigr\}\,.
\]
We first study the solutions of \eqref{Ode4} which
converge to zero as $y \to +\infty$. 

\begin{lem}\label{lem:Wzero}
Up to translations in the variable $y$, there exists a unique 
solution of \eqref{Ode4} on the center manifold $\cW_+$ which 
converges to zero as $y \to +\infty$. This solution 
satisfies
\begin{equation}\label{asym2}
  \tilde u(y) \,=\, \frac{c}{ry} + \cO\Bigl(\frac{1}{y^2}
  \Bigr)\,, \qquad 
  \tilde v(y) \,=\, \frac{c(1+r)}{dry} + \cO\Bigl(\frac{1}{y^2}
  \Bigr)\,, \qquad \hbox{as } y \to +\infty\,.
\end{equation}
\end{lem}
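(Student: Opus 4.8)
The strategy is to reduce the problem to the two--dimensional flow on the center manifold $\cW$ and then to desingularize the resulting degenerate equilibrium at the origin by a (directional) blow--up. Substituting $\tilde w = \cF(\tilde u,\tilde v)$ from \eqref{cFdef} into \eqref{Ode4}, and recalling $\cF(\tilde u,\tilde v) = \cO(\tilde u\tilde v)$ with $\cF(\tilde u,0) = \cF(0,\tilde v) = 0$, one obtains a planar system for $(\tilde u,\tilde v)$ on $\cW_+$ which, near the origin, reads
\begin{equation*}
  c\,\tilde u' \,=\, \tilde u(\tilde u - d\tilde v) + \cO\bigl((|\tilde u|+|\tilde v|)^3\bigr)\,,
  \qquad c\,\tilde v' \,=\, -r\,\tilde u\tilde v + \cO\bigl((|\tilde u|+|\tilde v|)^3\bigr)\,,
\end{equation*}
so that the origin is a fully non--hyperbolic equilibrium (the vector field vanishes to second order). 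A matching ansatz $\tilde u\sim a/y$, $\tilde v\sim b/y$ forces $ra = c$ and $db = a+c$, i.e. $a = c/r$ and $b = c(1+r)/(dr)$, which already identifies the leading terms of \eqref{asym2} and indicates that the approach to the origin takes place along the fixed ray $\tilde v/\tilde u \to (1+r)/d$.

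To justify this, introduce $\rho = \tilde u > 0$, $q = \tilde v/\tilde u > 0$ (valid on $\cW_+$) and rescale the independent variable by $\D\tau = \rho\,\D y$. A direct computation gives
\begin{equation*}
  \dot\rho \,=\, \tfrac1c\,\rho(1-dq) + \cO(\rho^2)\,, \qquad
  \dot q \,=\, -\tfrac1c\,q(1+r-dq) + \cO(\rho)\,,
\end{equation*}
where the dot denotes $\D/\D\tau$. The axis $\{\rho = 0\}$ is invariant and carries the scalar flow $\dot q = -\tfrac1c q(1+r-dq)$, whose only equilibria are $q = 0$ and $q_* := (1+r)/d$. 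At $(\rho,q) = (0,q_*)$ the Jacobian has eigenvalues $-r/c$ (with eigenvector transverse to $\{\rho = 0\}$) and $(1+r)/c$ (along $\{\rho = 0\}$): the point is a hyperbolic saddle, and its one--dimensional stable manifold $W^s$ crosses $\{\rho = 0\}$ transversally. Its branch in $\{\rho > 0\}$ is a single orbit, which — up to translation in $\tau$, hence in $y$ — yields the desired solution of \eqref{Ode4} on $\cW_+$ converging to the origin; this proves existence.

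For uniqueness one has to show that \emph{every} orbit of the reduced planar system lying in $\cW_+$ and converging to $(0,0)$ as $y \to +\infty$ in fact belongs to $W^s$. After blow--up such an orbit has $\rho \to 0$, so its $\omega$--limit set is a compact connected invariant subset of the half--axis $\{\rho = 0,\ q \ge 0\}$, hence a single equilibrium. The value $q = 0$ is excluded because the other equilibrium of the blown--up system, $(0,0)$, is itself a saddle whose local stable set is contained in the invariant axis $\{\rho = 0\}$, so no orbit with $\rho > 0$ can converge to it. Therefore $q \to q_*$ and the orbit lies on $W^s$. Finally, the expansions \eqref{asym2} follow by translating the local dynamics near the saddle back to the original variables: $\dot\rho \approx -\tfrac rc\,\rho$ gives $\rho \sim C e^{-r\tau/c}$, and $\D y = \D\tau/\rho$ then gives $y \sim \tfrac{c}{rC}\,e^{r\tau/c}$, whence $\tilde u = \rho = \tfrac{c}{ry} + \text{l.o.t.}$; using $q \to q_*$ and $\tilde w = \cF(\tilde u,\tilde v) = \cO(\tilde u\tilde v)$ yields the two remaining expansions, the remainder estimates resulting from a refinement of the smooth graph $q = \Psi(\rho)$ representing $W^s$ together with an integration of the resulting closed scalar equation $\rho' = -\tfrac rc\,\rho^2(1+\cO(\rho))$.

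The main obstacle I expect is this passage through the non--hyperbolic origin. One must set up the blow--up inside the center manifold (which is only $C^k$, so all arguments have to stay within its domain of validity), verify that the orbits of interest remain in a sector where $q$ is bounded and positive, and run the $\omega$--limit argument carefully — keeping in mind that the origin is an accumulation point of the line of equilibria $\{\tilde u = 0\}$, which is precisely the reason why most nearby orbits do \emph{not} reach it and the set of those that do is only one--dimensional. Once the saddle structure of $(0,q_*)$ is established, existence, uniqueness up to translation, and the asymptotic expansion \eqref{asym2} all follow from the stable manifold theorem and elementary integration.
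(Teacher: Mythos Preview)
Your blow-up argument for existence is correct and is essentially a reparametrization of the paper's. The paper substitutes $\tilde u(y)=\tfrac{a}{y}f(\ln y)$, $\tilde v(y)=\tfrac{b}{y}g(\ln y)$ with $a=c/r$, $b=c(1{+}r)/(dr)$ and $z=\ln y$, obtaining a non-autonomous system for $(f,g)$ whose autonomous limit as $z\to+\infty$ has a hyperbolic saddle at $(1,1)$ with eigenvalues $1+1/r$ and $-1$; your saddle $(0,q_*)$ in the autonomous $(\rho,q,\tau)$ system is the same object after the identification $\tau\sim(c/r)z$, and your eigenvalues $(1{+}r)/c$, $-r/c$ rescale accordingly. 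Your version has the mild advantage of being autonomous from the outset, so the stable manifold theorem applies with no asymptotic-autonomy caveat.

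For uniqueness the paper takes a genuinely different and more direct route that sidesteps the obstacle you anticipate. Having obtained one solution as a graph $\tilde v=\Psi(\tilde u)$ with $\Psi(x)=\tfrac{1+r}{d}\,x+\cO(x^2)$, it computes that along \emph{any} orbit on $\cW_+$,
\[
  \frac{\D}{\D y}\bigl(\tilde v-\Psi(\tilde u)\bigr)\,=\,\Delta(\tilde u,\tilde v)\,\bigl(\tilde v-\Psi(\tilde u)\bigr)\,,
  \qquad \Delta(\tilde u,\tilde v)\,=\,\frac{1}{c}\,\tilde u\bigl(1+\cO(|\tilde u|+|\tilde v|)\bigr)\,>\,0
\]
near the origin; thus $|\tilde v-\Psi(\tilde u)|$ is non-decreasing in $y$ and, for an orbit converging to the origin, must vanish identically. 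No control on $q=\tilde v/\tilde u$ is needed. In contrast, your $\omega$-limit argument genuinely requires $q$ to remain bounded: otherwise the forward orbit in the $(\rho,q)$ chart is not precompact and you cannot conclude that its $\omega$-limit set lies in $\{\rho=0,\,0\le q<\infty\}$. The gap can be closed --- for instance by passing to the second chart $(\tilde u/\tilde v,\tilde v)$, where the line of equilibria $\{\tilde u=0,\,\tilde v>0\}$ is normally attracting, so any orbit with $q\to\infty$ would force $\tilde v\to\sigma_0>0$, a contradiction --- but the paper's graph-comparison argument avoids it entirely and is shorter.
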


\begin{proof}
We first prove the existence of a solution of \eqref{Ode4} on $\cW_+$ 
which converges to zero as $y \to +\infty$. We perform the 
change of variables
\begin{equation}\label{fgdef}
  \tilde u(y) \,=\, \frac{a}{y}\,f\bigl(\ln y\bigr)\,, \qquad
  \tilde v(y) \,=\, \frac{b}{y}\,g\bigl(\ln y\bigr)\,, \qquad
  z \,=\, \ln y\,,
\end{equation}
where $a = c/r$ and $b = c(1+r)/(dr)$. If $\tilde u,\tilde v$ 
evolve according to \eqref{Ode4} with $\tilde w = \cF(\tilde u,\tilde v)$, 
the new functions $f(z),g(z)$ satisfy the system
\begin{equation}\label{fgsys}
\begin{aligned}
  f' \,&=\, f + \frac{1}{c}f\Bigl(1 - a e^{-z}f\Bigr)
  \Bigl(af -db g + d e^{-z} \cR(f,g,z)\Bigr)\,, \\
  g' \,&=\, g - \frac{1}{b}f\Bigl(bg - e^{-z}\cR(f,g,z)\Bigr)
  \Bigl(1 - b e^{-z}g + e^{-2z}\cR(f,g,z)\Bigr)\,,
\end{aligned}
\end{equation}
where $\cR(f,g,z) = e^{2z}\cF(a e^{-z}f,be^{-z}g)$ and ${}'$ now
denotes differentiation with respect to the new variable 
$z = \ln y$. As $|\cR(f,g,z)| \le C |f| |g|$ by \eqref{cFdef},
we see that the non-autonomous system \eqref{fgsys} converges 
as $z \to +\infty$ to
\begin{equation}\label{fglimit}
  f' \,=\, f \left\{1 + \frac{1}{r}f - \left(1 + \frac{1}{r}\right) g\right\}\,, 
  \qquad g' \,=\, g\bigl(1 - f\bigr)\,.
\end{equation}
This limiting system has a unique positive equilibrium $(\bar f,\bar g) = (1,1)$, which is
hyperbolic, and the eigenvalues of the linearized operator are easily found to be
$1 + 1/r$ and $-1$. Applying the stable manifold theorem, we deduce that there exists a
solution $(f,g)$ of \eqref{fgsys} which converges to $(1,1)$ as $z \to +\infty$ and
satisfies $|f(z) - 1| + |g(z)-1| = \cO(e^{-z})$ in this limit. Returning to the original
variables, we conclude that the solution of \eqref{Ode4} on $\cW_+$ given by \eqref{fgdef}
converges to $(0,0)$ as $y \to +\infty$ and satisfies \eqref{asym2}.

To prove uniqueness, it is convenient to write the evolution 
equations on the center manifold $\cW_+$ in the condensed form 
\begin{equation}\label{uvreduc}
  \tilde u' \,=\, \cG(\tilde u,\tilde v)\,, \qquad 
  \tilde v' \,=\, \cH(\tilde u,\tilde v)\,,
\end{equation}
where 
\begin{equation}\label{GHdef}
\begin{aligned}
  \cG(\tilde u,\tilde v) \,&=\, \frac{1}{c}\,\tilde u(1-\tilde u)
  \bigl(\tilde u - d\tilde v + d\cF(\tilde u,\tilde v)\bigr)\,, \\
  \cH(\tilde u,\tilde v) \,&=\, -\frac{r}{c}\,\tilde u\bigl(\tilde v -
  \cF(\tilde u,\tilde v)\bigr)\bigr(1- \tilde v + \cF(\tilde u,
  \tilde v)\bigr)\,. 
\end{aligned}
\end{equation}
The solution $(\tilde u,\tilde v)$ of \eqref{uvreduc} constructed in 
the previous step satisfies $\tilde v = \Psi(\tilde u)$ in some 
$\epsilon$-neighborhood of the origin, where $\Psi : (0,\epsilon) 
\to \R_+$ is a $C^k$ function satisfying the functional relation
\begin{equation}\label{funcrel}
  \cH\bigl(x,\Psi(x)\bigr) \,=\, \Psi'(x)\,\cG\bigl(x,\Psi(x)\bigr)\,, 
  \qquad x \in (0,\epsilon)\,.
\end{equation}
Moreover, in agreement with \eqref{asym2}, we have $\Psi(x) = 
(1+r)x/d + \cO(x^2)$ as $x \to 0$. 

Now, we consider an {\em arbitrary} positive solution $(\tilde u,\tilde v)$ 
of \eqref{uvreduc} that converges to the origin as $y \to +\infty$.
Using \eqref{funcrel}, we observe that
\begin{align}\nonumber
  \frac{\D}{\D y}\bigl(\tilde v - \Psi(\tilde u)\bigr) \,&=\, 
  \cH\bigl(\tilde u,\tilde v\bigr) - \Psi'(\tilde u)\,
  \cG\bigl(\tilde u,\tilde v\bigr) \\[-1mm] \label{diffeq}
  \,&=\, \cH\bigl(\tilde u,\tilde v\bigr) - \cH\bigl(\tilde u,
  \Psi(\tilde u)\bigr) - \Psi'(\tilde u)\Bigl(\cG\bigl(\tilde u,
  \tilde v\bigr) - \cG\bigl(\tilde u,\Psi(\tilde u)\bigr)\Bigr) \\
  \nonumber
  \,&=\, \Delta(\tilde u,\tilde v)\bigl(\tilde v - \Psi(\tilde u)\bigr)\,,
\end{align}
where
\[
  \Delta(\tilde u,\tilde v) \,=\, \int_0^1 \Bigl(\partial_2 
  \cH\bigl(\tilde u,(1-t)\Psi(\tilde u)+t \tilde v\bigr) - 
  \Psi'(\tilde u)\,\partial_2 \cG\bigl(\tilde u,(1-t)\Psi(\tilde u)
  +t \tilde v\bigr)\Bigr)\dd t\,. 
\]
Using \eqref{cFdef} and \eqref{GHdef}, it is straightforward to 
compute an asymptotic expansion of $\Delta(\tilde u,\tilde v)$ 
as $(\tilde u,\tilde v) \to (0,0)$, which is found to be 
$\Delta(\tilde u,\tilde v) = c^{-1}\tilde u \bigl(1 + \cO(|\tilde u| +
|\tilde v|)\bigr)$. In particular $\Delta(\tilde u,\tilde v) > 0$ for 
small solutions on $\cW_+$. Keeping that observation in mind, 
we integrate \eqref{diffeq} on the interval $[y_1,y_2]$ for $y_1 > 0$ 
sufficiently large and obtain the relation
\[
  \tilde v(y_2) - \Psi(\tilde u(y_2)) \,=\, \exp\Bigl(
  \int_{y_1}^{y_2} \Delta(\tilde u(y),\tilde v(y))\dd y\Bigr)
  \Bigl(\tilde v(y_1) - \Psi(\tilde u(y_1))\Bigr)\,,
\]
which implies that $|\tilde v(y_2) - \Psi(\tilde u(y_2))| \ge 
|\tilde v(y_1) - \Psi(\tilde u(y_1))|$. By assumption, the 
left-hand side converges to zero as $y_2 \to +\infty$, 
and we conclude that $\tilde v(y_1) = \Psi(\tilde u(y_1))$ for all 
(sufficiently large) $y_1 > 0$. This precisely means that 
$(\tilde u,\tilde v)$ coincides, up to a translation in the 
variable $y$, with the solution of \eqref{uvreduc} constructed 
in the first step. 
\end{proof}

Since a whole neighborhood of the origin in $\R^3$ is foliated by
one-dimensional strong stable leaves over the two-dimensional center
manifold $\cW$, see \cite{CLW}, we can extract from Lemma~\ref{lem:Wzero}
useful information on the asymptotic behavior as $y \to +\infty$
of the traveling waves of the original system \eqref{Ode3}.
As a first application, we prove uniqueness of the traveling wave
for each value of the speed parameter. 

\begin{lem}\label{lem:unique}
Given any $c > 0$, there exists at most one value $\alpha \ge
\alpha_1(c)$ of the shooting parameter such that the solution
$(u_{\alpha,c}(y),v_{\alpha,c}(y),w_{\alpha,c}(y))$ of \eqref{Ode3} given
by Lemma~\ref{lem:unstab} converges to $S_+ = (1,0,0)$ as $y \to +\infty$. 
\end{lem}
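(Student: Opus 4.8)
The plan is to exploit the monotonicity of the flow on the center manifold $\cW_+$ together with the uniqueness statement of Lemma~\ref{lem:Wzero}. Suppose, for contradiction, that there exist two values $\alpha_1(c) \le \alpha < \alpha'$ of the shooting parameter for which the corresponding solutions of \eqref{Ode3} both converge to $S_+ = (1,0,0)$ as $y \to +\infty$. After the change of variables \eqref{tildeuvw}, these become two solutions of \eqref{Ode4} that converge to the origin; since a whole neighborhood of the origin in $\R^3$ is foliated by one-dimensional strong stable leaves over $\cW$, each of them converges to the center manifold $\cW$ along its strong stable fiber, so after discarding an initial stretch of $y$ they are governed by the reduced two-dimensional flow \eqref{uvreduc} on $\cW_+$. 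By Lemma~\ref{lem:Wzero}, there is --- up to translation in $y$ --- a \emph{unique} solution of \eqref{uvreduc} on $\cW_+$ converging to the origin. Hence the two trajectories, viewed as subsets of $\cW_+$, must coincide: they trace out the same curve $\tilde v = \Psi(\tilde u)$, with only a shift in the time parametrization.

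The crux is then to show that two trajectories of the \emph{full} system \eqref{Ode3} whose $\omega$-limit projects onto the same orbit of the reduced flow cannot come from distinct values $\alpha \ne \alpha'$. For this I would go back to the monotonicity result Lemma~\ref{lem:monotone}: for all $y$ (up to the relevant exit times, which are $+\infty$ here since $\alpha,\alpha' \ge \alpha_1(c) \ge \alpha_0(c)$), we have the strict inequalities $u_{\alpha',c}(y) > u_{\alpha,c}(y)$ and $v_{\alpha',c}(y) > v_{\alpha,c}(y)$. In the tilded variables this reads $\tilde u_{\alpha'}(y) < \tilde u_{\alpha}(y)$ and, because $\tilde v = v + w/c$ and $w \to 0$, the $v$-inequality should propagate to a strict inequality for $\tilde v$ for large $y$ as well (or one works directly with $v$ and $w$ as phase-space coordinates). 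So the two trajectories are strictly ordered in both coordinates and never intersect in $\R^3$. But a curve in the plane $\{\tilde v = \Psi(\tilde u)\}$ is the graph of an increasing function of $\tilde u$ near the origin (since $\Psi(x) = (1+r)x/d + \cO(x^2) > 0$), so the ordering $\tilde u_{\alpha'} < \tilde u_{\alpha}$ forces $\tilde v_{\alpha'} = \Psi(\tilde u_{\alpha'}) < \Psi(\tilde u_{\alpha}) = \tilde v_{\alpha}$ whenever both points lie on that single curve --- contradicting the strict inequality $\tilde v_{\alpha'} > \tilde v_{\alpha}$ inherited from Lemma~\ref{lem:monotone}. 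This contradiction establishes uniqueness.

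The step I expect to be the main obstacle is making rigorous the passage from the three-dimensional flow to the reduced planar flow, namely justifying that \emph{every} solution of \eqref{Ode3} in $\cD$ converging to $S_+$ actually lands on (a fixed choice of) the center manifold $\cW$ after finite $y$, rather than merely converging to it. This is where one needs the strong-stable foliation of \cite{CLW}: the strong stable leaf through a point near the origin is one-dimensional and transverse to $\cW$, and convergence to $S_+$ forces the trajectory to lie eventually in the leaf through some point of $\cW_+$; following that leaf back to its base point and using that the flow on a leaf shadows the flow on $\cW$ exponentially, one reduces the asymptotics to those of a genuine orbit of \eqref{uvreduc}. Care is also needed because $\cW$ is only $C^k$ and not unique, but the uniqueness claim of Lemma~\ref{lem:Wzero} is about orbits, not about the manifold, so any admissible choice of $\cW$ serves. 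Once this reduction is in place, the combination of Lemma~\ref{lem:Wzero} (uniqueness of the converging orbit on $\cW_+$) and Lemma~\ref{lem:monotone} (strict ordering of the shooting family) closes the argument with no further computation.
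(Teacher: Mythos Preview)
Your overall architecture is the same as the paper's: reduce to the center manifold, invoke the uniqueness of the converging orbit from Lemma~\ref{lem:Wzero}, and then derive a contradiction with the monotonicity of Lemma~\ref{lem:monotone}. The gap is in the final contradiction step.

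First, Lemma~\ref{lem:monotone} gives $v_{\alpha',c}(y) > v_{\alpha,c}(y)$, not $\tilde v_{\alpha'}(y) > \tilde v_{\alpha}(y)$. Since $\tilde v = v + w/c = v + v'/c$ and no ordering of $v'$ is established, the inequality does not ``propagate'' to $\tilde v$; your parenthetical fallback (``or one works directly with $v$ and $w$'') is not a proof. Second, even granting some ordering of $\tilde v$, the relation $\tilde v = \Psi(\tilde u)$ holds only up to exponentially small corrections off the center manifold, while the differences $\tilde u_{\alpha} - \tilde u_{\alpha'}$ and $\tilde v_{\alpha'} - \tilde v_{\alpha}$ are themselves tending to zero (both solutions share the leading asymptotics \eqref{asym2}). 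Your pointwise comparison does not show that the gap from monotonicity dominates the error from the center-manifold approximation, so no contradiction follows.

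The paper closes this gap by extracting from the \emph{proof} of Lemma~\ref{lem:monotone} a quantitative statement: the ratio $\rho(y) = v_{\alpha',c}(y)/v_{\alpha,c}(y)$ cannot have a positive local maximum, hence $\rho(y) \ge \rho(y_1) > 1$ for all large $y$. On the other hand, once both solutions are shown to satisfy \eqref{asym2} (via the strong-stable foliation and Lemma~\ref{lem:Wzero}, exactly as you outline), one has $v_{\alpha,c}(y) \sim v_{\alpha',c}(y) \sim c(1+r)/(dry)$, so the ratio tends to $1$. These two facts are incompatible. The point is that a \emph{ratio} bounded away from $1$ survives the passage to the common asymptotics, whereas a bare pointwise inequality does not.
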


\begin{proof}
Assume that, for some $\alpha \ge \alpha_1(c)$, the solution
$(u_{\alpha,c},v_{\alpha,c},w_{\alpha,c})$ of \eqref{Ode3}
converges to $S_+ = (1,0,0)$ as $y \to +\infty$. We denote by
$(\tilde u_{\alpha,c},\tilde v_{\alpha,c},\tilde w_{\alpha,c})$
the corresponding solution of \eqref{Ode4}, given by the
change of variables \eqref{tildeuvw}. We first observe that
$(\tilde u_{\alpha,c},\tilde v_{\alpha,c},\tilde w_{\alpha,c})$
does not lie on the strong stable manifold of the origin
$(0,0,0)$, because that manifold consists of solutions
of \eqref{Ode4} satisfying $\tilde u' = \tilde v' = 0$, $\tilde
w' = -c \tilde w$. Thus $(\tilde u_{\alpha,c},\tilde v_{\alpha,c},
\tilde w_{\alpha,c})$ approaches exponentially fast a nontrivial
solution $(\tilde u,\tilde v)$ on the center manifold $\cW_+$,
which converges itself to $(0,0)$ as $y \to +\infty$. 
Using Lemma~\ref{lem:Wzero}, we conclude that the pair
$(\tilde u_{\alpha,c},\tilde v_{\alpha,c})$ satisfies the
asymptotic expansion \eqref{asym2}, and that $\tilde
w_{\alpha,c} = \cF(\tilde u_{\alpha,c},\tilde v_{\alpha,c})$
up to exponentially small corrections as $y \to +\infty$. 

Now suppose that another solution $(u_{\alpha'\!,c},v_{\alpha'\!,c}, w_{\alpha'\!,c})$
also converges to $S_+$. If $\alpha' > \alpha$, the proof of Lemma~\ref{lem:monotone}
shows that there exists $\rho > 1$ such that $v_{\alpha'\!,c}(y)/v_{\alpha,c}(y) \ge \rho$
for all sufficiently large $y > 0$. On the other hand, since the asymptotic behavior of
both solutions is given by \eqref{asym2}, as is explained above, it follows from
\eqref{tildeuvw} and \eqref{cFdef} that
\[
  \lim_{y \to +\infty} \frac{v_{\alpha'\!,c}(y)}{v_{\alpha,c}(y)}
  \,=\, \lim_{y \to +\infty} \frac{\tilde v_{\alpha'\!,c}(y)}{\tilde
  v_{\alpha,c}(y)} \,=\, 1\,,
\]
which gives a contradiction. So we must have $\alpha' = \alpha$
and uniqueness is established. 
\end{proof}

We now prove the main result of this section, namely the 
existence of a traveling wave connecting the infected
state $S_-$ to the healthy state $S_+$. 

\begin{lem}\label{lem:exist}
If $c > 0$ and $\alpha = \alpha_1(c)$, the solution $(u_{\alpha,c},
v_{\alpha,c},w_{\alpha,c})$ of \eqref{Ode3} given by
Lemma~\ref{lem:monotone} converges to $S_+ = (1,0,0)$ as
$y \to +\infty$. 
\end{lem}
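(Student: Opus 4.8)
The plan is to argue by contradiction, reducing the statement to a dynamical fact near one of the artificial equilibria of the desingularized system. By Remark~\ref{rem:alpha1} we already know that, for $\alpha = \alpha_1(c)$, the solution $(u_{\alpha,c},v_{\alpha,c},w_{\alpha,c})$ is global, stays in $\cD$, and satisfies $u_\infty(\alpha_1(c),c) = 1$, while $v_\infty(\alpha_1(c),c) \in [0,1)$ by Lemma~\ref{lem:limits}. Hence it suffices to rule out that $v_\infty(\alpha_1(c),c) = v_*$ for some $v_* \in (0,1)$.

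Assume therefore that $v_* \in (0,1)$, and consider the equilibrium $E_* = (1,v_*,0)$ of \eqref{Ode3}, towards which the solution at $\alpha = \alpha_1(c)$ converges. Linearizing \eqref{Ode3} at $E_*$ (equivalently, linearizing the transformed system \eqref{Ode4} at $(0,v_*,0)$) one finds the eigenvalues $-dv_*/c < 0$, $0$ and $-c < 0$, the zero eigenvalue corresponding to the direction tangent to the line $L = \{(1,s,0) : s \in \R\}$, which consists entirely of equilibria of \eqref{Ode3}. Thus, near $E_*$, the set $L$ is a one-dimensional center manifold along which the dynamics is trivial, and the transverse dynamics is a contraction. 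Exactly as in the foliation argument used at $S_+$ (see \cite{CLW}), a suitable neighborhood $N$ of $E_*$ --- bounded away from the doubly degenerate equilibrium $S_+ = (1,0,0)$ --- is foliated by two-dimensional strong stable leaves over $L$, is positively invariant under the flow of \eqref{Ode3}, and is such that every forward orbit through a point of $N$ converges to some equilibrium $(1,s,0) \in L$ with $|s - v_*|$ as small as we wish; in particular $s \geq v_*/2 > 0$.

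The next step is to transfer this to nearby shooting parameters. Since $(u_{\alpha_1,c},v_{\alpha_1,c},w_{\alpha_1,c})(y) \to E_*$ as $y \to +\infty$, there is $y_0$ with $(u_{\alpha_1,c},v_{\alpha_1,c},w_{\alpha_1,c})(y_0) \in N$, and on $(-\infty,y_0]$ this solution remains in a compact subset of $\cD$ with $v_{\alpha_1,c} \geq v_* > 0$. By the continuity in the shooting parameter stated in Remark~\ref{rem:allsol}, uniform on $(-\infty,y_0]$, the same holds for $(u_{\alpha,c},v_{\alpha,c},w_{\alpha,c})$ when $\alpha$ is close enough to $\alpha_1(c)$: it is defined and stays in $\cD$ on $(-\infty,y_0]$ with $v_{\alpha,c} > 0$ there, and lies in $N$ at $y = y_0$. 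Positive invariance of $N$ then keeps it in $N$ for all $y \geq y_0$, so $v_{\alpha,c}(y) > 0$ for every $y \in \R$; by Lemma~\ref{lem:invariance} this gives $T(\alpha,c) = +\infty$, and the property of $N$ forces $u_\infty(\alpha,c) = 1$ (and $v_\infty(\alpha,c) \geq v_*/2$). Choosing $\alpha < \alpha_1(c)$ close to $\alpha_1(c)$ now contradicts the definition \eqref{alpha1def} of $\alpha_1(c)$ when $\alpha > \alpha_0(c)$, and contradicts the definition \eqref{alpha0def} of $\alpha_0(c)$ (via $T(\alpha,c) = +\infty$) in the remaining case $\alpha_1(c) = \alpha_0(c)$. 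Hence $v_* = 0$, i.e. $v_\infty(\alpha_1(c),c) = 0$, which together with $u_\infty(\alpha_1(c),c) = 1$ is exactly the asserted convergence to $S_+$.

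The delicate point is the second step: one must check that the line of equilibria $L$ is normally attracting near $E_*$ and produce a genuinely forward-invariant neighborhood $N$ in which all orbits converge to $L$, being careful that $N$ avoids $S_+$, where $v_* = 0$ is excluded precisely because there the transverse spectrum degenerates further. Should one prefer to avoid quoting invariant-manifold machinery, the same effect can be obtained by hand from the structure of \eqref{Ode3}: near $E_*$ the first equation reads $c u' = u(1-u)(u + dv - 1)$ with $u + dv - 1$ close to $dv_* > 0$, which forces $1-u$ to decay exponentially in $y$; consequently the forcing term $r v(1-u)(1-v)$ in the $w$-equation is exponentially small, and integrating $v'' + c v' = \cO(e^{-\kappa y})$ shows that $v' \to 0$ and $v$ converges to a limit close to $v_*$, uniformly for $\alpha$ near $\alpha_1(c)$ --- which is all that the argument requires.
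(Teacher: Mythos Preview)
Your argument is correct and follows essentially the same route as the paper: assume $v_\infty(\alpha_1(c),c)=v_*>0$, linearize at the artificial equilibrium $(1,v_*,0)$ to exhibit a normally attracting line of equilibria, use the resulting stable foliation to obtain a neighborhood in which all orbits converge to points $(1,\bar v,0)$ with $\bar v>0$, and then invoke continuity in the shooting parameter to produce $\alpha'<\alpha_1(c)$ with $u_\infty(\alpha',c)=1$, contradicting the definition of $\alpha_1(c)$. Your treatment is in fact slightly more careful than the paper's in distinguishing the cases $\alpha_1(c)>\alpha_0(c)$ and $\alpha_1(c)=\alpha_0(c)$, and the elementary alternative you sketch at the end (exponential decay of $1-u$ forcing $v''+cv'=\cO(e^{-\kappa y})$) is a legitimate replacement for the invariant-manifold citation.
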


\begin{proof}
Fix $\alpha = \alpha_1(c)$.  We already know that
$(u_{\alpha,c}(y),v_{\alpha,c}(y),w_{\alpha,c}(y))$ converges to
$(1,v_\infty,0)$ as $y \to +\infty$, for some $v_\infty \in [0,1)$. If
$v_\infty > 0$, we obtain a contradiction as follows. We perform
again the change of variables \eqref{tildeuvw} and consider system
\eqref{Ode4} near the equilibrium $(0,v_\infty,0)$.  The
linearization at this point is given by
\[
  \tilde u' \,=\, -\frac{dv_\infty}{c}\,\tilde u\,, \qquad
  \tilde v' \,=\, -\frac{rv_\infty}{c}(1-v_\infty)\,\tilde u\,, \qquad
  \tilde w' \,=\, -c \tilde w -\frac{rv_\infty}{c}(1-v_\infty)\,\tilde u
  \,.
\]
In contrast to the situation where $v_\infty = 0$, which was studied
previously, the zero eigenvalue is now simple, with eigenvector
$(0,1,0)$, and there are two negative eigenvalues $-dv_\infty/c$ and
$-c$. Applying the center manifold theorem again, we deduce that there
exists a small open neighborhood $\Omega$ of $(0,v_\infty,0)$ in
$\R^3$ that is foliated by two-dimensional stable leaves over a
one-dimensional center manifold, which itself consists of the family
of equilibria $(0,v,0)$ with $v$ close enough to $v_\infty$. Taking a
smaller neighborhood if needed, we can make sure that, for all initial
data in $\Omega$, the solution of \eqref{Ode4} converges to $(0,\bar
v,0)$ as $y \to +\infty$, for some $\bar v > 0$.

Now let $(\tilde u_{\alpha,c},\tilde v_{\alpha,c},\tilde w_{\alpha,c})$
denote the solution of \eqref{Ode4} obtained from $(u_{\alpha,c},
v_{\alpha,c},w_{\alpha,c})$ by the change of variables \eqref{tildeuvw}.
By assumption $(\tilde u_{\alpha,c}(y),\tilde v_{\alpha,c}(y),\tilde
w_{\alpha,c}(y))$ converges to $(0,v_\infty,0)$ as $y \to +\infty$, hence
there exists $\bar y \in \R$ such that $(\tilde u_{\alpha,c}(y),\tilde
v_{\alpha,c}(y), \tilde w_{\alpha,c}(y)) \in \Omega$ for all $y \ge \bar y$.
By continuity, we infer that $(\tilde u_{\alpha'\!,c}(\bar y),\tilde
v_{\alpha'\!,c}(\bar y), \tilde w_{\alpha'\!,c}(\bar y)) \in \Omega$ if
$\alpha' < \alpha$ is sufficiently close to $\alpha$, which means
that $(u_{\alpha'\!,c}(y),v_{\alpha'\!,c}(y),w_{\alpha'\!,c}(y))$ converges
to $(1,\bar v,0)$ as $y \to +\infty$ for some $\bar v > 0$. As $\alpha' <
\alpha = \alpha_1(c)$, this clearly contradicts definition \eqref{alpha1def}.
So we must have $v_\infty(\alpha,c) = 0$. 
\end{proof}

\begin{rem}\label{rem:monotone}
Using similar arguments, one can also show that the map
$\alpha \mapsto v_\infty(\alpha,c)$ is continuous and strictly
increasing for $\alpha \ge \alpha_1(c)$. 
\end{rem}

\subsection{Asymptotic behavior in the original variables}
\label{subsec26}

It is now an easy task to complete to proof of Theorem~\ref{main1}. Given any 
$c > 0$, we denote $\alpha = \alpha_1(c) > 0$, where $\alpha_1(c)$ is defined 
in \eqref{alpha1def}. We know from Lemma~\ref{lem:exist} that the solution $(u,v,w) =
(u_{\alpha,c},v_{\alpha,c}, w_{\alpha,c})$ of \eqref{Ode3} given by Lemma~\ref{lem:unstab} 
converges to $S_+ = (1,0,0)$ as $y \to +\infty$, so that the boundary conditions 
\eqref{BC3} are satisfied. We now undo the change of variables \eqref{ydef}, which in
view of \eqref{uvdef} can be written in the equivalent form
\begin{equation}\label{xidef}
  \frac{\D \xi}{\D y} \,\equiv\, \bigl(\Phi^{-1}\bigr)'(y)
  \,=\, 1 - u(y)\,, \qquad y \in \R\,.
\end{equation}
Specifically, we define
\begin{equation}\label{xiexp}
  \xi(y) \,=\, \Phi^{-1}(y) \,=\, y - \int_{-\infty}^y
  u(y')\dd y'\,, \qquad y \in \R\,.
\end{equation}
Using the asymptotic expansions \eqref{asym1} as $y \to -\infty$ and \eqref{asym2} 
as $y \to +\infty$, it is straightforward to verify that
\begin{equation}\label{Phiminus}
  \Phi^{-1}(y) \,=\, \begin{cases}
    y - \frac{\alpha}{\mu}\,e^{\mu y} + \cO\Bigl(e^{(\mu+\eta)y}\Bigr)
    & \hbox{as} \quad y \to -\infty\,, \\
    \frac{c}{r}\,\ln(y) + \xi_0 + \cO\Bigl(\frac{1}{y}\Bigr)
    & \hbox{as} \quad y \to +\infty\,, \\
    \end{cases}
\end{equation}
for some $\xi_0 \in \R$. At this point, it is important to note that $\xi(y) 
\to +\infty$ as $y \to +\infty$, so that $\bar\xi = +\infty$ in the terminology
of Lemma~\ref{lem:sharp}. Sharp fronts of the original system \eqref{Ode1}
would correspond to solutions of \eqref{Ode3} satisfying $\int_0^{+\infty}
(1{-}u)\dd y < \infty$, which are excluded by Lemma~\ref{lem:Wzero}. Inverting 
\eqref{Phiminus}, we easily find
\begin{equation}\label{Phiplus}
  \Phi(\xi) \,=\, \begin{cases}
    \xi + \frac{\alpha}{\mu}\,e^{\mu \xi} + \cO\Bigl(e^{(\mu+\eta)\xi}\Bigr)
    & \hbox{as} \quad \xi \to -\infty\,, \\
    e^{\gamma(\xi-\xi_0)} + \cO(1)
    & \hbox{as} \quad \xi \to +\infty\,, \\
    \end{cases}
\end{equation}
where $\gamma = r/c$. Finally, defining $\cU(\xi) = u(\Phi(\xi))$
and $\cV(\xi) = v(\Phi(\xi))$ in agreement with \eqref{uvdef},
we obtain by construction a solution of \eqref{Ode1} which
satisfies the boundary conditions \eqref{BC1}. Since $u'(y) < 0$
and $v'(y) > 0$ for all $y \in \R$, it is clear that $\cU'(\xi) < 0$
and $\cV'(\xi) > 0$ for all $\xi \in \R$, and the asymptotic
expansions \eqref{AsymUV1}, \eqref{AsymUV2} are direct consequences
of \eqref{asym1}, \eqref{asym2}, and \eqref{Phiplus}. For any
$c > 0$, the uniqueness (up to translations) of the solution of
\eqref{Ode1} satisfying \eqref{BC1} is a consequence of
Lemma~\ref{lem:unique}. The proof of Theorem~\ref{main1}
is thus complete. \QED

\begin{figure}[ht]
  \begin{center}
  \begin{picture}(480,160)% width and height of the picture
  \put(20,0){\includegraphics[width=0.43\textwidth]{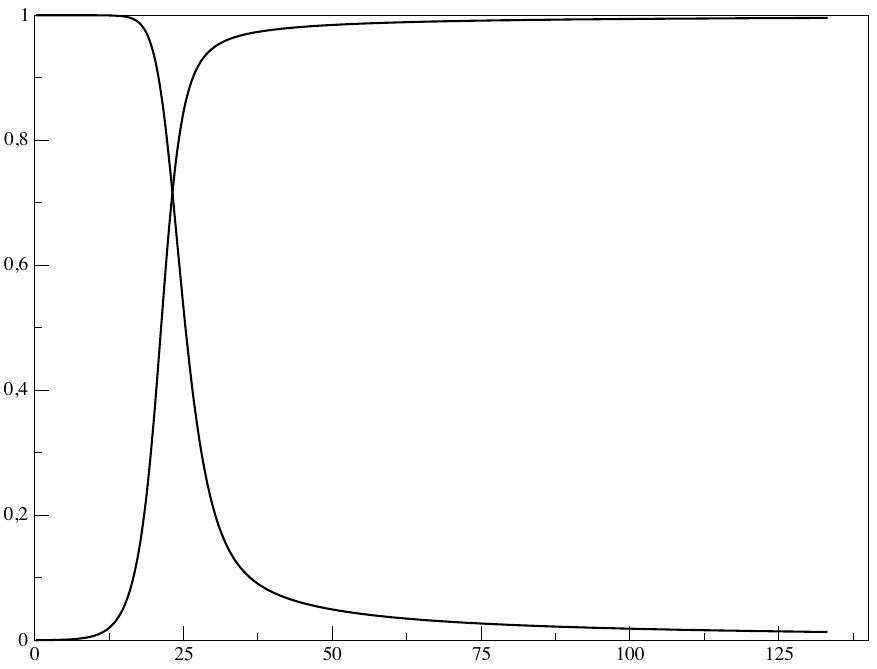}}
  \put(250,0){\includegraphics[width=0.43\textwidth]{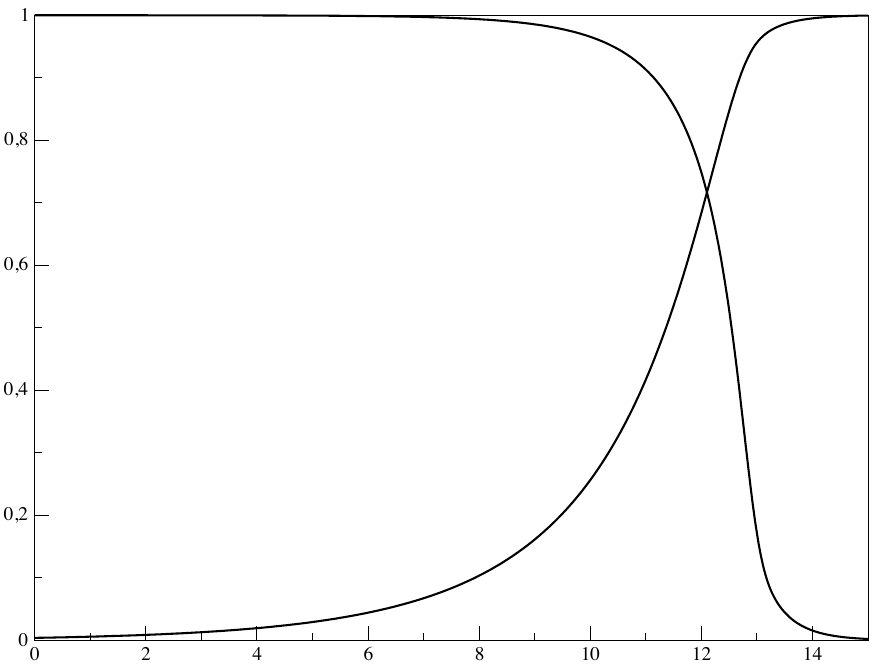}}
  \put(32,66){$u(y)$}
  \put(64,66){$v(y)$}
  \put(364,66){$\cU(\xi)$}
  \put(412,66){$\cV(\xi)$}
  \put(65,100){$\xrightarrow[\hspace{1cm}]{}$}
  \put(405,100){$\xrightarrow[\hspace{0.6cm}]{}$}
  \put(120,32){{\small desingularized}}
  \put(137,20){{\small variables}}
  \put(265,20){{\small original variables}}
  \end{picture}
  \caption{{\small The profile of the same propagation front is represented in
  the desingularized variables (left) and in the original variables (right). 
  The values of the parameters are $d = 2$, $r = 5$, and $c = 0.5$. 
  Note that $1-u(y)$ and $v(y)$ converge slowly to zero as $y \to +\infty$, 
  in agreement with \eqref{asym2}, whereas $1-\cU(\xi)$ and $\cV(\xi)$ 
  decay exponentially and may even exhibit a sharp edge when $c$ is small 
  enough. In contrast, the behavior near $-\infty$ is identical in both sets 
  of variables, which is not obvious here because the horizontal scales are 
  very different.} \label{fig3}}
  \end{center}
\end{figure}

\subsection{Existence of traveling waves when $d < 1$}
\label{subsec27}

Since the beginning of Section~\ref{subsec22}, we assumed that the
parameter $d$ in \eqref{redGGsys} is larger than one, which seems to be
the most relevant situation in cancerology, see \cite{McGiEtAl14}.
For completeness, we now consider the opposite case where $0 < d < 1$.
The analysis being very similar, we just indicate how the proof of
Theorem~\ref{main1} can be modified to obtain the conclusions of
Theorem~\ref{main2}.

Our starting point is again the desingularized system \eqref{Ode3},
which has now the following nontrivial equilibria\: the infected state
$S_- = (0,1,0)$, the healthy state $S_+ = (1,0,0)$, the coexistence
state $S_d = (1-d,1,0)$, and the artificial equilibria
$(1,v_\infty,0)$ where $v_\infty \neq 0$.  The first important
observation is that there exists no traveling wave connecting $S_-$ to
$S_+$ in that case.  Indeed, it is clear from the linearization
\eqref{linS-} that the unstable manifold of the infected state $S_-$
is one-dimensional when $d < 1$.  Solutions on that manifold are of
the form $(0,v,v')$, where $v$ solves the Fisher--KPP equation
\eqref{FKPP} and $v(y) \to 1$ as $y \to -\infty$.
Since $u$ is equal to  $0$ on the unstable manifold, we never obtain a
heteroclinic connection between $S_-$ and $S_+$. 

We now consider solutions on the unstable manifold of the 
coexistence state $S_d = (1-d,1,0)$. Linearizing \eqref{Ode3}
at $S_d$, we obtain
\begin{equation}\label{linSd}
  \tilde u' \,=\, \frac{d(1-d)}{c}\,\bigl(\tilde u - d\tilde v\bigr)\,, 
  \qquad \tilde v ' \,=\, - w\,, \qquad w' \,=\, -cw -dr\tilde v\,,
\end{equation}
where $\tilde u = u - 1 + d$ and $\tilde v = 1 - v$. We thus find
two positive eigenvalues
\begin{equation}\label{lamudef2}
  \lambda \,=\, \frac12\bigl(-c + \sqrt{c^2 + 4dr}\bigr) \,>\, 0\,,
  \qquad \mu \,=\, \frac{d(1-d)}{c} \,>\, 0\,,
\end{equation}
as well as one negative eigenvalue $-\frac12\bigl(c + \sqrt{c^2 + 4dr}
\bigr) < 0$. So we again have a two-dimensional unstable manifold, 
and the analogue of Lemma~\ref{lem:unstab} is\:

\begin{lem}\label{lem:unstab2} For any $\alpha \in \R$, 
the ODE system \eqref{Ode3} has a unique solution such that
\begin{equation}\label{asym3}
\begin{aligned}
  u(y) \,&=\, 1 - d + \alpha\,e^{\mu y} + \frac{d\mu}{\mu-\lambda}
  \Bigl(e^{\lambda y} - e^{\mu y}\Bigr) + \cO\Bigl(e^{(\mu + \eta)y}
  \Bigr)\,, \\ 
  v(y) \,&=\, 1 - e^{\lambda y} + \cO\Bigl(e^{(\lambda + \eta)y}\Bigr)\,, 
  \quad
  w(y) \,=\, - \lambda\,e^{\lambda y} + \cO\Bigl(e^{(\lambda + \eta)y}\Bigr)\,,
\end{aligned}
\end{equation}
as $y \to -\infty$, where $\lambda,\mu$ are given by \eqref{lamudef2}
and $\eta = \min(\lambda,\mu) > 0$. 
\end{lem}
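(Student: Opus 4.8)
The plan is to follow the proof of Lemma~\ref{lem:unstab} almost verbatim, with the coexistence state $S_d = (1-d,1,0)$ playing the role of the infected state $S_-$. The only genuinely new feature when $d < 1$ is that the $u$-equation linearized at $S_d$ is coupled to the $v$-component, which is the source of the extra term $\tfrac{d\mu}{\mu-\lambda}\bigl(e^{\lambda y}-e^{\mu y}\bigr)$ appearing in \eqref{asym3}.

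First I would record the linearization \eqref{linSd} of \eqref{Ode3} at $S_d$ in the variables $\tilde u = u-1+d$, $\tilde v = 1-v$, $w = v' = -\tilde v'$. The $(\tilde v,w)$-block decouples from $\tilde u$ and has characteristic polynomial $s^2 + cs - dr$, with roots $\lambda > 0$ and $\zeta = -\tfrac12\bigl(c + \sqrt{c^2+4dr}\bigr) < 0$, while the $\tilde u$-direction contributes the eigenvalue $\mu = d(1-d)/c > 0$, in agreement with \eqref{lamudef2}; the corresponding eigenvectors are $e_\mu = (1,0,0)$, $e_\lambda = \bigl(\tfrac{d\mu}{\mu-\lambda},1,-\lambda\bigr)$, and a similar one for $\zeta$. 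Thus $S_d$ is hyperbolic with a two-dimensional unstable subspace $E^u = \mathrm{span}(e_\mu,e_\lambda)$, and by the unstable manifold theorem (see \cite[Chapter~3]{GH} or \cite{CLW}) it carries a two-dimensional local unstable manifold, tangent to $E^u$, consisting precisely of the orbits that stay near $S_d$ and converge to it as $y \to -\infty$; each such orbit behaves like $a\,e^{\mu y}e_\mu + b\,e^{\lambda y}e_\lambda$ to leading order, for some $(a,b) \in \R^2$.

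Next I would exploit translation invariance. Since $y \mapsto y+t$ scales $(a,b)$ by $(e^{\mu t},e^{\lambda t})$ and \eqref{asym3} requires $\tilde v(y) = e^{\lambda y}+\cO(e^{(\lambda+\eta)y})$, i.e.\ $b>0$, the translation is chosen so that $b = 1$, which fixes the normalization $v(y) = 1-e^{\lambda y}+\cO(\cdot)$. (Orbits with $b<0$ satisfy $v>1$ near $-\infty$ and leave the region $\cD$; orbits with $b=0$ form the one-dimensional unstable manifold inside the invariant plane $\{v=1,\,w=0\}$, on which $u$ obeys the scalar equation $cu' = -u(1-u)(1-d-u)$; neither is covered by \eqref{asym3}, which is why the family is one-parameter.) As in the shift argument used in the proof of Lemma~\ref{lem:alpha0}, composing with a large backward translation shows that after this normalization the residual coefficient along $e_\mu$ may be prescribed to be any real number, so the normalized orbits are in bijection with one real parameter.

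It then remains to compute the expansion. Plugging $\tilde v = e^{\lambda y}+\cO(e^{(\lambda+\eta)y})$ and $w = -\lambda e^{\lambda y}+\cO(e^{(\lambda+\eta)y})$, with $\eta = \min(\lambda,\mu)$, into the last two equations of \eqref{Ode3} expanded about $S_d$ (the quadratic remainders enter only at orders $e^{(\lambda+\mu)y}$ and $e^{2\lambda y}$, hence within the claimed error), and then solving $\tilde u' - \mu\tilde u = -d\mu\,e^{\lambda y}+\cO(e^{(\mu+\eta)y})$, I would split $\tilde u$ into the homogeneous solution $\alpha\,e^{\mu y}$ and the particular solution $\tfrac{d\mu}{\mu-\lambda}\bigl(e^{\lambda y}-e^{\mu y}\bigr)$ that vanishes at $y=0$ (and which remains meaningful, as the limit $-d\mu\,y\,e^{\mu y}$, in the resonant case $\lambda = \mu$). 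This produces \eqref{asym3} with $\alpha$ the coefficient of the homogeneous mode, and uniqueness for a given $\alpha$ follows from the uniqueness part of the unstable manifold theorem together with the fact that the normalization has consumed the translation freedom. The only point that deserves a little care — and the only, minor, obstacle — is the routine bookkeeping of the higher-order terms and of the non-generic resonances (such as $\lambda = \mu$ or $\lambda = 2\mu$, which hold only on a codimension-one set in parameter space); these are absorbed in the usual way by allowing polynomial-in-$y$ factors, and they do not affect the statement because $\eta$ was chosen exactly so that only the two displayed exponential modes occur before the error term.
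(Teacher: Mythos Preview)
Your proposal is correct and follows essentially the same approach as the paper, which does not give an explicit proof of this lemma at all: as with Lemma~\ref{lem:unstab}, the paper treats it as a routine application of the unstable manifold theorem and a direct computation of the leading asymptotics (citing \cite[Chapter~3]{GH}), merely adding the remark on the resonant case $\lambda=\mu$ and on the range $\alpha\in\R$. Your argument supplies exactly those details---the linearization \eqref{linSd}, the eigenstructure, the normalization by translation, and the solution of the forced linear equation for $\tilde u$---in the expected way.
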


Of course, in the particular case where $\lambda = \mu$, the first 
equation in \eqref{asym3} should read
\[
  u(y) \,=\, 1 - d + \bigl(\alpha - d\mu y\bigr)\,e^{\mu y} 
  + \cO\Bigl(e^{2\mu y}\Bigr)\,, \qquad \hbox{as }y \to -\infty\,.
\]
The main difference with Lemma~\ref{lem:unstab} is that the shooting parameter $\alpha$
can take arbitrary values in $\R$, and is not requested to be positive. The reason is that
we look for solutions of \eqref{Ode3} that lie in the region $\cD$ defined by
\eqref{Ddef}, which is the case of all solutions \eqref{asym3} in the asymptotic regime
$y \to -\infty$, even if $\alpha < 0$.

As in the proof of Theorem~\ref{main1}, the strategy is to find an appropriate value of
the shooting parameter $\alpha \in \R$ so that the solution of \eqref{Ode3} defined by
\eqref{asym3} converges to $S_+$ as $y \to +\infty$.  We first observe that
Lemma~\ref{lem:invariance} still holds, so that we can define $T(\alpha,c)$ by
\eqref{Tdef} for all $\alpha \in \R$. Next, as in Lemma~\ref{lem:monotone}, we claim that
$T(\alpha,c)$ is an increasing function of $\alpha$ and that inequalities \eqref{comp1}
hold when $\alpha_2 > \alpha_1$ and $y \in (-\infty,T(\alpha_1,c))$. The first part of the
proof of Lemma~\ref{lem:monotone} uses a Taylor approximation of system \eqref{Ode3} near
$S_-$ and must therefore be modified since the starting point is now the coexistence state
$S_d \neq S_-$. It is clear from \eqref{asym3} that $u_{\alpha_2,c}(y) > u_{\alpha_1,c}(y)$ 
when $y < 0$ is sufficiently large, and straightforward calculations show that the 
expression \eqref{omexp} of $\omega(y) = e^{-\lambda y}\bigl( v_{\alpha_2,c}(y) - 
v_{\alpha_1,c}(y)\bigr)$ has to be replaced by
\begin{equation}\label{omexp2}
  \omega(y) \,=\, \frac{r(\alpha_2 - \alpha_1)}{\mu(\mu+
  \sqrt{\mu^2 + 4rd})}\,e^{\mu y} + \cO\Bigl(e^{(\mu + \eta)y}\Bigr)\,, 
  \qquad \hbox{as }y \to -\infty\,.
\end{equation}
We deduce as before that inequalities \eqref{comp1} holds when $y < 0$ is large
 enough, and the second part of the proof is unchanged. 

As in \eqref{alpha0def}, we define
\begin{equation}\label{alpha0def2}
  \alpha_0(c) \,=\, \inf\bigl\{\alpha \in \R \,|\, T(\alpha,c) = +\infty
  \bigr\} \,\in\, [-\infty,+\infty]\,,
\end{equation}
and we have the following analogue of Lemma~\ref{lem:alpha0}\:

\begin{lem}\label{lem:alpha02}
If $c \ge 2\sqrt{r}$, then $\alpha_0(c) = -\infty$. If $0 < c < 2\sqrt{r}$, 
then $-\infty < \alpha_0(c) < +\infty$. 
\end{lem}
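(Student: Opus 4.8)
The plan is to mimic the proof of Lemma~\ref{lem:alpha0}, using the Fisher--KPP front as a reference solution obtained in the degenerate limit, but now taking the limit $\alpha \to -\infty$ instead of $\alpha \to 0$. First I would observe that, on the unstable manifold of the coexistence state $S_d$, sending $\alpha \to -\infty$ should force the first component $u$ towards the value $0$: indeed, from \eqref{asym3} the initial push in the $u$-direction is $\alpha e^{\mu y}$, so a very negative $\alpha$ drives $u$ below $1-d$, and once $u$ is small the first equation in \eqref{Ode3} keeps it small. To make this rigorous I would introduce a shift $y_0$ (analogous to the one used at the end of the proof of Lemma~\ref{lem:alpha0}) chosen so that $\alpha e^{\mu y_0}$ is of order one, rescale, and show that on any half-line $(-\infty, y_1]$ the shifted solution converges uniformly, as $\alpha \to -\infty$, to $(0,v,v')$ where $v$ solves the Fisher--KPP equation \eqref{FKPP} normalized by $e^{-\lambda y}(1-v(y)) \to 1$. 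The monotonicity in $\alpha$ furnished by the analogue of Lemma~\ref{lem:monotone} then does the bookkeeping: $v_{\alpha,c}$ is increasing in $\alpha$, so if the limiting Fisher--KPP front stays positive (the case $c \ge 2\sqrt r$) then $v_{\alpha,c}(y) > 0$ for all $y$ and all $\alpha$, giving $T(\alpha,c) = +\infty$ for every $\alpha \in \R$, i.e. $\alpha_0(c) = -\infty$.

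For the weakly damped case $0 < c < 2\sqrt r$ I would argue exactly as in Lemma~\ref{lem:alpha0}. On one side, the Fisher--KPP front is now oscillatory, so there is $\bar y$ with $v(\bar y) < 0$; by continuity (the analogue of the uniform dependence in Remark~\ref{rem:allsol}) $v_{\alpha,c}(\bar y) < 0$ once $\alpha$ is sufficiently negative, hence $T(\alpha,c) < +\infty$ for $\alpha$ very negative, which shows $\alpha_0(c) > -\infty$. On the other side, to show $\alpha_0(c) < +\infty$ I would reuse verbatim the large-$\alpha$ barrier argument from the proof of Lemma~\ref{lem:alpha0}: set $y_0 = (\ln\alpha)/\mu$, consider the shifted functions $\hat u, \hat v, \hat w$, note that as $\beta = \alpha^{-\lambda/\mu} \to 0$ they converge uniformly on $(-\infty,y_1]$ to $(\chi,1,0)$ where $\chi$ solves an autonomous scalar ODE and increases to $1$, and then run the same differential-inequality bootstrap with the constants $\epsilon_0$ and $K$ of \eqref{Kdef}. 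The only change is that, starting from $S_d = (1-d,1,0)$ rather than $S_- = (0,1,0)$, the limiting equation for $\chi$ is the one governing $u$ on the invariant set $\{v = 1\}$, namely $c\chi' = \chi(1-\chi)(d\cdot 1 - 1 + \chi)$, i.e. $c\chi' = \chi(1-\chi)(d - 1 + \chi)$; since $d > 0$ this right-hand side is positive for $0 < \chi < 1$, so $\chi$ is again increasing with limit $1$, and the rest of the estimate is identical because the inequality $d\hat v - 1 \ge d\epsilon_0$ only used $\hat v \ge 1 - \epsilon_0$.

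I expect the main obstacle to be the $\alpha \to -\infty$ limit: unlike the $\alpha \to 0$ limit in Lemma~\ref{lem:alpha0}, here one must check that the shifted solutions do not escape the physical region $\cD$ (in particular that $u$ stays in $(0,1)$) before the limiting dynamics takes over, and that the convergence to the Fisher--KPP front is genuinely uniform on half-lines rather than merely on compact sets. The subtlety is that a negative $\alpha$ pushes $u$ downward from $1-d$, so one should verify $u$ stays positive; this follows from the invariance of $\{u > 0\}$ under \eqref{Ode3} (the right-hand side of the $u$-equation vanishes at $u = 0$), which is exactly the mechanism already used in the proof of Lemma~\ref{lem:invariance}. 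Once that is in place, the argument is a routine transcription of the $d > 1$ case, and I would present it as such, indicating the two modifications (the shift direction and the limiting equation for $\chi$) and referring to the earlier proofs for the details.
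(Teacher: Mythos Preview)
Your plan to parallel Lemma~\ref{lem:alpha0} by sending $\alpha\to-\infty$ has a real gap. In the earlier lemma the limit $\alpha\to 0$ drives $u_{\alpha,c}$ to zero \emph{uniformly} on $\R$ (the leading term is $\alpha e^{\mu y}$), and the $v$-normalization at $-\infty$ is untouched, so one lands cleanly on the FKPP profile. That mechanism is absent here. When $d<1$ the solutions emanate from $S_d=(1-d,1,0)$, and a very negative $\alpha$ makes $u_{\alpha,c}$ drop from $1-d$ towards $0$ across a layer located near $y_0(\alpha)\sim -\mu^{-1}\log|\alpha|\to-\infty$; the convergence $u_{\alpha,c}\to 0$ is therefore only pointwise, never uniform on half-lines $(-\infty,y_1]$, where one always has $u_{\alpha,c}\approx 1-d$ for $y\ll y_0(\alpha)$. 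Your shifted picture makes this explicit: translating so that $\alpha e^{\mu y_0}$ is of order one, the shifted pair converges on $(-\infty,y_1]$ to $(\chi,1,0)$ with $\chi$ decreasing from $1-d$ to $0$ --- the $v$-component tends to the constant $1$, not to a FKPP front. And after shifting you have lost the monotone ordering (Lemma~\ref{lem:monotone} compares unshifted solutions), so the ``bookkeeping'' you invoke does not apply. Without shifting, the decreasing limit $v_*=\inf_\alpha v_{\alpha,c}$ exists by monotonicity, but identifying it as a positive FKPP profile --- in particular ruling out $v_*\equiv 0$ and checking $v_*(-\infty)=1$ --- is real work, not a routine transcription. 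The same obstruction undermines your treatment of the case $0<c<2\sqrt r$ with $\alpha\ll 0$.

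The paper avoids the limit entirely. For $c\ge 2\sqrt r$ it observes directly that $0<u_{\alpha,c}<1$ yields the differential inequality $v_{\alpha,c}''+cv_{\alpha,c}'+rv_{\alpha,c}(1-v_{\alpha,c})>0$, and compares $v_{\alpha,c}$ with the positive FKPP front; this is a one-step estimate valid simultaneously for every $\alpha$. For $0<c<2\sqrt r$ and $\alpha\ll 0$ the paper again bypasses the limit: it reads off from the expansion \eqref{asym3} that $u+dv-1$ becomes negative at some finite $y$, invokes the dichotomy of Lemma~\ref{lem:limits} to force $u_\infty=0$, and then applies Sturm comparison (as in Lemma~\ref{lem:alpha1}) to see that $v_{\alpha,c}$ must change sign.

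Two smaller corrections on the $\alpha\gg 0$ side. The limiting equation $c\chi'=\chi(1-\chi)(d-1+\chi)$ does \emph{not} have positive right-hand side on all of $(0,1)$ when $d<1$: it changes sign at $\chi=1-d$. The conclusion $\chi\nearrow 1$ is still correct because $\alpha>0$ launches $\chi$ just above $1-d$, but your stated reason is wrong. More importantly, the constant $\epsilon_0=(d-1)/(2d)$ from \eqref{Kdef} is negative for $d<1$, and the inequality $d\hat v-1\ge d\epsilon_0>0$ can never hold since $d\hat v<d<1$; the barrier must instead control $\hat u+d\hat v-1$, using that $\hat u$ itself is already close to $1$, so the constants genuinely need to be reworked.
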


\begin{proof}
If $(u_{\alpha,c},v_{\alpha,c},w_{\alpha,c})$ denotes the solution of \eqref{Ode3} 
satisfying \eqref{asym3}, and $v$ is the solution of the Fisher--KPP equation 
\eqref{FKPP} normalized so that $e^{-\lambda y}(1-v(y)) \to 1$ as $y \to -\infty$, 
we first observe that $v_{\alpha,c}(y) > v(y)$ as long as $v(y) > 0$, because 
\[
  v_{\alpha,c}'' + cv_{\alpha,c}' + rv_{\alpha,c}(1-v_{\alpha,c}) 
  \,>\, v_{\alpha,c}'' + cv_{\alpha,c}' + rv_{\alpha,c}(1-u_{\alpha,c})
  (1-v_{\alpha,c}) \,=\, 0\,. 
\]
If $c \ge 2\sqrt{r}$, we know that the Fisher--KPP front $v$ remains positive, and this
implies that $T(\alpha,c) = + \infty$ for all $\alpha \in \R$.

In the weakly damped case where $0 < c < 2\sqrt{r}$, we use the same arguments as in the
proof of Lemma~\ref{lem:alpha0}, with suitable modifications. First, if $\alpha < 0$ is
large enough, one can prove verify \eqref{asym3} that the quantity $u(y) + d v(y) - 1$
takes negative values for some (large) $y < 0$. This is obvious when $\mu < \lambda$,
because any sufficiently large $y < 0$ has the desired property, but if $\mu > \lambda$
one has to choose $y$ such that $-(\log|\alpha|)/(\mu{-}\lambda) \ll y \ll 
-(\log|\alpha|)/\mu$ (the details being left to the reader). Therefore, assuming that 
$T(\alpha,c) = +\infty$, we deduce as in the proof of Lemma~\ref{lem:limits} that 
$u_{\alpha,c}(y) \to 0$ as $y \to +\infty$.  This in turn implies, as in the proof of
Lemma~\ref{lem:alpha1}, that $v_{\alpha,c}(y)$ satisfies a weakly damped Fisher--KPP
equation for large $y$, and must therefore change sign, which gives a contradiction.  So
$T(\alpha,c) < \infty$ if $\alpha < 0$ is sufficiently large.

Finally, if $\alpha > 0$ is large enough, we prove as in Lemma~\ref{lem:alpha0} that the
function $u_{\alpha,c}$ converges to $1$ so rapidly that the non-linearity in the equation
for $v_{\alpha,c}$ becomes totally depleted before $v_{\alpha,c}(y)$ leaves a small
neighborhood of the initial point. Thus $v_{\alpha,c}$ remains close to a solution of the
linear equation $v'' + cv' = 0$, hence converges to a nonzero limit $v_\infty$ as
$y \to +\infty$.  In particular, we have $T(\alpha,c) = +\infty$ if $\alpha > 0$ is large
enough. We leave the details to the reader.
\end{proof}

The rest of the proof of Theorem~\ref{main2} follows the arguments given in
Sections~\ref{subsec24}--\ref{subsec26} without substantial modifications. In particular,
Lemma~\ref{lem:limits} is unchanged, so that we can define $\alpha_1(c)$ as in
\eqref{alpha1def}, and the analogue of Lemma~\ref{lem:alpha1} asserts that
$-\infty < \alpha_1(c) < +\infty$ for any $c > 0$. If $\alpha = \alpha_1(c)$, we have
$T(\alpha,c) = +\infty$ and $u_\infty(\alpha,c) = 1$ as in Remark~\ref{rem:alpha1}, and
finally $v_\infty(\alpha,c) = 0$ because the analysis on the center manifold of the healthy
state, which is given in Section~\ref{subsec25}, does not depend on the value of the
parameter $d > 1$. This proves the existence of a (unique) heteroclinic trajectory of
system~\eqref{Ode3} connecting the coexistence state $S_d$ to the healthy state $S_+$,
when $0 < d < 1$.

When returning to the original variables, we have to keep 
in mind that the change of variables \eqref{xidef} is not 
close to identity for large negative values of $y$, because 
$u(y) \to 1-d$ as $y \to -\infty$. Instead of \eqref{xiexp}, we 
thus define
\[
  \xi(y) \,=\, \Phi^{-1}(y) \,=\, dy - \int_{-\infty}^y
  \Bigl(u(y') + d -1\Bigr)\dd y'\,, \qquad y \in \R\,,
\]
and we observe that the function $\Phi$ satisfies\:
\[
  \Phi(\xi) \,=\, \begin{cases}
    \frac{1}{d}\,\xi + \cO\Bigl(e^{\eta\xi/d}\Bigr)
    & \hbox{as} \quad \xi \to -\infty\,, \\
    e^{\gamma(\xi-\xi_0)} + \cO(1)
    & \hbox{as} \quad \xi \to +\infty\,, \\
    \end{cases}
\]
where $\xi_0 \in \R$, $\gamma = r/c$, and $\eta = \min(\mu,\lambda)$.
Setting $\cU(\xi) = u(\Phi(\xi))$, $\cV(\xi) = v(\Phi(\xi))$, we 
obtain the desired solution of \eqref{Ode1} satisfying the boundary 
conditions \eqref{BC2}. Note that
\begin{align*}
  \cU(\xi) \,&=\, 1 - d + \alpha e^{\mu \xi/d} + \frac{d\mu}{\mu-\lambda}
  \Bigl(e^{\lambda \xi/d} - e^{\mu \xi/d}\Bigr) + \cO\Bigl(e^{2\eta\xi/d}
  \Bigr)\,, \\ 
  \cV(\xi) \,&=\, 1 - e^{\lambda \xi/d} + \cO\Bigl(e^{(\lambda+\eta)\xi/d}
  \Bigr)\,, \qquad \hbox{as }\xi \to -\infty\,,
\end{align*}
for some $\alpha \in \R$, whereas the asymptotic behavior 
\eqref{AsymUV2} as $\xi \to +\infty$ is unchanged. This concludes
the proof of Theorem~\ref{main2}. \QED

\section{Asymptotic analysis of slowly propagating fronts}\label{sec3}

Perhaps the most striking aspect of Theorems~\ref{main1} and \ref{main2} is the
absence of a minimal speed for the monotone traveling waves of
system~\eqref{redGGsys}.  To understand what happens in the singular limit
$c \to 0$, we compute in this section the leading term of a (formal) asymptotic
expansion of the front profile. We do not feel the necessity of rigorous proofs
at this stage, but we provide numerical illustrations supporting our
arguments. We always assume that $d > 1$, and we consider propagation fronts
connecting the infected state $(\cU_-,\cV_-) = (0,1)$ and the healthy state
$(\cU_+,\cV_+) = (1,0)$ of system~\eqref{Ode1}. As is explained in
Section~\ref{sec2}, such fronts correspond to solutions $(u,v,w)$ of the 
desingularized system \eqref{Ode3} satisfying the asymptotic conditions \eqref{BC3}.

If the parameter $c > 0$ is very small, the first equation in \eqref{Ode3} suggests
that the function $u$ is a {\em fast variable} in the sense of geometric singular 
perturbation theory \cite{Fen79}. Its transition from the initial value $0$ to the 
final value $1$ should occur in a small interval of size $\cO(c)$ centered at some
point $y \in \R$, which we assume to be the origin $y = 0$. So, in a first approximation, 
we expect that the function $v$ is close to a function $v_0$ satisfying
\begin{equation}\label{v0def}
\begin{array}{lll}
  v_0''(y) + c\,v_0'(y) + r v_0(y)\bigl(1-v_0(y)\bigr) \,=\, 0\,,
  & \hbox{ if} & y < 0\,, \\[1mm]  
  v_0''(y) + c\,v_0'(y) \,=\, 0\,, & \hbox{ if} & y > 0\,.  
\end{array}
\end{equation}

\begin{lem}\label{lem:v0}
If $0 < c < 2\sqrt{r}$, there exists a unique decreasing function $v_0 : \R \to 
(0,1)$ of class $C^{1,1}$ satisfying Eq.~\eqref{v0def} as well as the boundary 
conditions $v_0(-\infty) = 1$, $v_0(+\infty) = 0$. Moreover, one has
\begin{equation}\label{v00}
  v_0(0) \,=\, 1 - \frac{c}{c+\lambda} + \cO(c^2)\,, \qquad \hbox{as }~
  c \to 0\,,
\end{equation}
where $\lambda$ is given by \eqref{lamudef}. 
\end{lem}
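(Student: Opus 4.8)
The plan is to construct $v_0$ by gluing, at $y=0$, an orbit of the Fisher--KPP equation on the left half-line with an explicit decaying exponential on the right half-line, and then to reinterpret the $C^1$ matching at $y=0$ as a single scalar equation determining $v_0(0)$.

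First, on $(0,+\infty)$ the general solution of $v_0'' + cv_0' = 0$ is $A + B e^{-cy}$, and the conditions $v_0(+\infty) = 0$, $v_0 > 0$ force $A = 0$, $B > 0$; hence necessarily $v_0(y) = v_0(0)\,e^{-cy}$ for $y \ge 0$, and in particular $v_0'(0^+) = -c\,v_0(0)$. On $(-\infty,0)$, the requirement $v_0(-\infty) = 1$ together with $0 < v_0 < 1$ forces the orbit $(v_0,v_0')$ to lie on the one-dimensional unstable manifold of the saddle point $(1,0)$ of the planar system $v' = w$, $w' = -cw - rv(1-v)$, whose positive eigenvalue is the number $\lambda$ of \eqref{lamudef}. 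Near $(1,0)$ the relevant branch of this manifold is a smooth graph $w = g(v)$ with $g(v) = -\lambda(1-v) + \cO\bigl((1-v)^2\bigr)$ as $v \to 1$; following the orbit forward, $v$ decreases strictly, and $w = g(v) < 0$ as long as $0 < v < 1$, since if $w$ returned to $0$ at a point with $0 < v < 1$ it would do so with $w' = -rv(1-v) < 0$, which is impossible. Because $0 < c < 2\sqrt{r}$, the classical theory of the Fisher--KPP equation tells us that this orbit leaves the strip $\{0 < v < 1\}$ through the face $v = 0$ at a finite value of $y$, at a point $(0,w_0)$ with $w_0 = g(0^+) < 0$. (This is the only place the hypothesis $c < 2\sqrt{r}$ is used; for $c \ge 2\sqrt{r}$ the orbit converges monotonically to $(0,0)$ with $w \to 0^-$, and the construction below breaks down.)

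The $C^1$ matching at $y = 0$ now reduces to one scalar equation: we must choose the point $v_* := v_0(0) \in (0,1)$ on the above orbit so that $g(v_*) = -c\,v_*$ (which then gives $v_0'(0^-) = -cv_* = v_0'(0^+)$ automatically). Existence and uniqueness of $v_*$ follow by examining $h(v) := g(v) + cv$ along the orbit: one has $h(v) \to c > 0$ as $v \to 1^-$, $h(v) \to w_0 < 0$ as $v \to 0^+$, and $\frac{\D h}{\D v} = -\,rv(1-v)/g(v) > 0$ because $g(v) < 0$ on $(0,1)$; hence $h$ has exactly one zero $v_* \in (0,1)$. Translating the $y$-variable so that the orbit passes through $(v_*,-cv_*)$ at $y = 0$, we set $v_0$ equal to this orbit on $(-\infty,0]$ and $v_0(y) = v_*\,e^{-cy}$ on $[0,+\infty)$. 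By construction $v_0$ is strictly decreasing with values in $(0,1)$, satisfies \eqref{v0def} and the required limits, is $C^\infty$ away from $y = 0$ and $C^1$ at $y = 0$, while $v_0''$ has a jump of size $r\,v_*(1-v_*) \ne 0$ there; thus $v_0 \in C^{1,1}$. Uniqueness of $v_0$ is immediate, since each ingredient (the exponential on the right, the unstable-manifold orbit on the left, the matching value $v_*$) was forced.

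It remains to prove \eqref{v00}. Setting $z = 1 - v$ and writing $g(v) = -\psi(z)$ with $\psi(z) = \lambda z + a z^2 + \cO(z^3)$, substitution into the invariance identity $\psi'\psi + c\psi - rz + rz^2 = 0$ yields $\lambda^2 + c\lambda - r = 0$ at order $z$ (consistent with \eqref{lamudef}) and $a = -r/(3\lambda + c)$ at order $z^2$; note that $a$ and the cubic remainder remain bounded as $c \to 0$, since $\lambda$ stays bounded away from $0$. The matching equation $g(v_*) = -cv_*$ then reads $(\lambda + c)\,z_* = c - a\,z_*^2 + \cO(z_*^3)$, with $z_* := 1 - v_0(0)$. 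A soft argument gives $z_* \to 0$ as $c \to 0$: since $g = g_c \to g_0 < 0$ uniformly on compact subsets of $(0,1]$ while the line $w = -cv$ flattens to $w = 0$, the root $v_*$ must tend to a zero of $g_0$ in $[0,1]$, which can only be $v = 1$. With $z_* \to 0$ we obtain $z_* = c/(\lambda + c) + \cO(c^2)$, that is, $v_0(0) = 1 - c/(c+\lambda) + \cO(c^2)$. The steps requiring the most care are the verification that the left orbit stays in $\{0 < v < 1,\ w < 0\}$ all the way to the matching point, and the uniformity in $c$ (as $c \to 0$) of the unstable-manifold expansion near $(1,0)$; both rely on the smooth dependence of the saddle's unstable manifold on the parameter $c$.
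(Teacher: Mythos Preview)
Your proof is correct and follows essentially the same strategy as the paper: both glue a Fisher--KPP orbit on the left to the exponential $v_*e^{-cy}$ on the right, and reduce the $C^1$ matching to a single monotone scalar equation (the paper shows $\psi(y):=\phi'(y)+c\phi(y)$ is strictly decreasing in $y$, you show the equivalent fact that $h(v):=g(v)+cv$ is strictly increasing in $v$, and $\psi(y)=h(\phi(y))$ with $\phi'<0$). Your derivation of \eqref{v00} via the Taylor expansion of $g$ near $v=1$ is likewise the phase-plane version of the paper's expansion $v(y)=1-e^{\lambda(y+y_2)}+\cO(e^{2\lambda(y+y_2)})$.
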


\begin{proof}
Let $\phi$ be a decreasing solution of the Fisher--KPP equation \eqref{FKPP} such
that $\phi(y) \to 1$ as $y \to -\infty$. Since $c < 2\sqrt{r}$, we know that
$\phi$ does not stay positive, so there exists a unique $y_0 \in \R$ such that
$\phi(y_0) = 0$ and $\phi(y) > 0$ for all $y < y_0$.

We next consider the smooth function $\psi : (-\infty,y_0] \to \R$ defined by 
$\psi(y) = \phi'(y) + c\phi(y)$. We have $\psi'(y) = -r\phi(y)(1-\phi(y)) < 0$ 
for all $y < y_0$, whereas $\psi(-\infty) = c > 0$ and $\psi(y_0) = \phi'(y_0) < 0$. 
So there exists a unique $y_1 < y_0$ such that $\psi(y_1) = 0$, and after 
a suitable translation of the variable $y$ we can assume that $y_1 = 0$. 
If we now define
\begin{equation}\label{v0exp}
  v_0(y) \,=\, \begin{cases} \phi(y) & \hbox{if } y \le 0\,,\\
  \phi(0)e^{-cy} & \hbox{if } y \ge 0\,,
  \end{cases}
\end{equation}
we see that $v_0 \in C^{1,1}(\R)$, because $\phi'(0) + c\phi(0) = \psi(0) = 0$, 
and that $v_0$ satisfies \eqref{v0def} together with the desired boundary conditions.
Finally, we have as in \eqref{asym1} 
\begin{equation*}
  v(y) \,=\, 1 - e^{\lambda (y+y_2)} + \cO\Bigl(e^{2\lambda(y+y_2)}\Bigr)\,, 
  \qquad \hbox{as }~ y \to -\infty\,,
\end{equation*}
for some translation parameter $y_2 \in \R$. Neglecting the higher order
terms, we obtain the relation $0 = v'(0) + cv(0) = c - (c+\lambda)e^{\lambda y_2}$ 
which determines $y_2$, and we arrive at \eqref{v00}. 
\end{proof}

\begin{rem}\label{rem:v0}
There is no explicit formula for the function $v_0$ in Lemma~\ref{lem:v0}, 
but in the asymptotic regime where $c \ll 1$ one has $v_0 = \hat v_0 + 
\cO(c^2)$ where 
\begin{equation}\label{v0hat}
  \hat v_0(y) \,=\, \begin{cases} 1 - {\DS\frac{c}{c+\lambda}}
  \,e^{\lambda y} & \hbox{if } y \le 0\,,\\[2mm]
  {\DS\frac{\lambda}{c+\lambda}}\,e^{-cy} & \hbox{if } y \ge 0\,.
  \end{cases}
\end{equation}
Note that $\hat v_0 \in C^{1,1}(\R)$ and $\hat v_0(-\infty) = 1$, 
$\hat v_0(+\infty) = 0$.
\end{rem}

We next construct the leading order approximation of the function $u$.  
We assume that $c > 0$ is small enough so that $v_0(0) > 1/d$, which 
is possible in view of \eqref{v00}, and we denote $b = d v_0(0) - 1 > 0$. 
Observing that $dv_0(y) \approx dv_0(0) = 1 + b$ if $|y| = \cO(c)$, we 
postulate that $u$ is well approximated by a function $u_0$ satisfying 
the simplified equation
\begin{equation}\label{u0eq}
   u_0'(y) \,=\,  \frac{1}{c}\,u_0(y)(1-u_0(y))\bigl(b + u_0(y)\bigr)\,, 
   \qquad y \in \R\,,  
\end{equation}
together with the boundary conditions $u_0(-\infty) = 0$, $u_0(+\infty) = 1$. 
The solution of \eqref{u0eq} is implicitly given by the relation
\begin{equation}\label{u0exp}
  \frac{u_0(y)^{1+b}}{\bigl(1-u_0(y)\bigr)^b \bigl(b + u_0(y)\bigr)}
  \,=\, \frac{\alpha^{1+b}}{b}\,\exp\biggl(\frac{b(1+b)}{c}\,y\biggr)\,, 
  \qquad y \in \R\,,
\end{equation}
where $\alpha > 0$ is an integration constant which amounts to fixing the 
value $u_0(0) \in (0,1)$. This constant can be determined, for instance, by 
imposing the relation 
\begin{equation}\label{capp}
  c \,=\, r \int_{\R} \hat v_0(y)\bigl(1 - \hat v_0(y)\bigr)
  \bigl(1 - u_0(y)\bigr)\dd y\,, 
\end{equation}
which is the analogue of \eqref{cformula} at our level of approximation.
For later use, we also note that $u_0(y) = \alpha\,e^{by/c} + \cO(e^{2by/c})$
in the asymptotic regime where $y \to -\infty$. 

The approximate solution $(u_0,v_0)$ of \eqref{Ode2} constructed so
far describes relatively well the asymptotic region $y \to -\infty$
and the central region where the transition occurs from a neighborhood
of $(0,1)$ to the vicinity of $(1,0)$. However, this first-order
approximation is not realistic when $y > 0$ is large, because
$1-u_0(y)$ and $v_0(y)$ decay exponentially to zero as $y \to +\infty$, 
in sharp contrast with \eqref{asym2}. Nevertheless, it is highly interesting 
at this point to return to the original variables and to compute the 
corresponding approximate solution $(\cU_0,\cV_0)$ of \eqref{Ode1}. 
First of all, it is important to realize that $(\cU_0,\cV_0)$ is 
a {\em sharp front}, associated with some finite value $\bar\xi < +\infty$. 
Indeed, in view of \eqref{u0eq} the change of variables \eqref{xidef} becomes
\begin{equation}\label{chang1}
  \frac{\D \xi}{\D y} \,=\, 1 - u_0(y) \,=\, \frac{c u_0'(y)}{
  u_0(y)(b+u_0(y))}\,, \qquad y \in \R\,,
\end{equation}
so that
\begin{equation}\label{xiapp}
  \xi \,=\, \Phi^{-1}(y) \,=\, \frac{c}{b}\,\log\biggl(\frac{(1{+}b)
  u_0(y)}{b+u_0(y)}\biggr)\,, \qquad y \in \R\,.
\end{equation}
Here we have normalized things so that $\bar\xi = 0$, which means that the map
$\Phi : (-\infty,0) \to \R$ is a diffeomorphism. Moreover $\Phi^{-1}(y) 
\approx y + \kappa$ as $y \to -\infty$, where $\kappa = \frac{c}{b}\log\bigl(
\frac{1+b}{b}\,\alpha\bigr)$. It follows immediately from \eqref{xiapp} that
\begin{equation}\label{U0exp}
  \cU_0(\xi) \,:=\, u_0(\Phi(\xi)) \,=\, \frac{b\,e^{b\xi/c}}{1+b - 
  e^{b\xi/c}}\,, \qquad \xi \in (-\infty,0)\,.
\end{equation}
Remarkably, this expression does not involve the constant $\alpha$ in \eqref{u0exp}. 

Using \eqref{U0exp}, we can in turn compute the map $\Phi$ more explicitly. To 
this end, we write \eqref{chang1} in the equivalent form
\begin{equation}\label{chang2}
  \frac{\D y}{\D \xi} \,=\, \frac{1}{1 - \cU_0(\xi)} \,=\, \frac{1}{1+b}
  \,\frac{1+b - e^{b\xi/c}}{1 - e^{b\xi/c}}\,, \qquad \xi \in (-\infty,0)\,,
\end{equation}
and we easily deduce
\begin{equation}\label{Phiapp}
  y \,=\, \Phi(\xi) \,=\, \xi - \frac{c}{1+b}\,\log\Bigl(1 -  e^{b\xi/c}\Bigr)
  - \kappa\,, \qquad \xi \in (-\infty,0)\,.
\end{equation}
It follows that
\begin{equation}\label{V0exp}
   \cV_0(\xi) \,:=\, \hat v_0(\Phi(\xi)) \,=\, \begin{cases} 
   1 - {\DS\frac{c}{c+\lambda}}\,e^{\lambda\Phi(\xi)} & \hbox{if } \xi \le \xi_*\,,\\[2mm]
  {\DS\frac{\lambda}{c+\lambda}}\,e^{-c\Phi(\xi)} & \hbox{if } \xi_* \le \xi < 0\,, 
  \end{cases}
\end{equation}
where $\xi_* = \Phi^{-1}(0) \in (-\infty,0)$. Note that $\xi_*$ depends on 
$\kappa$, hence on the constant $\alpha$ in \eqref{u0exp}. Figure~\ref{fig4} 
shows that the approximations \eqref{U0exp}, \eqref{V0exp} are remarkably 
accurate, even at moderately small speeds such as $c = 0.2$. 

\begin{figure}[ht]
  \begin{center}
  \begin{picture}(480,160)% width and height of the picture
  \put(20,0){\includegraphics[width=0.43\textwidth]{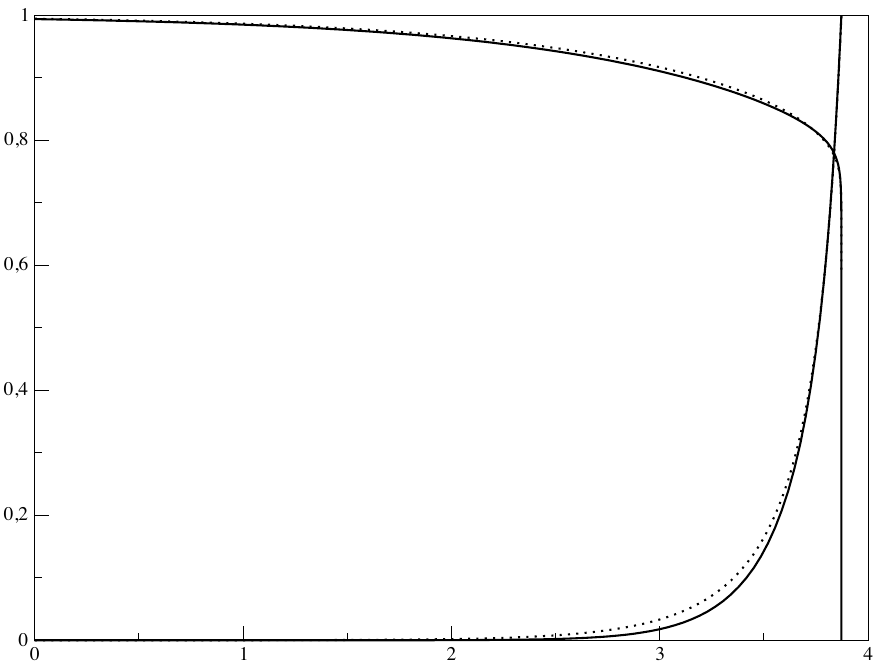}}
  \put(250,0){\includegraphics[width=0.43\textwidth]{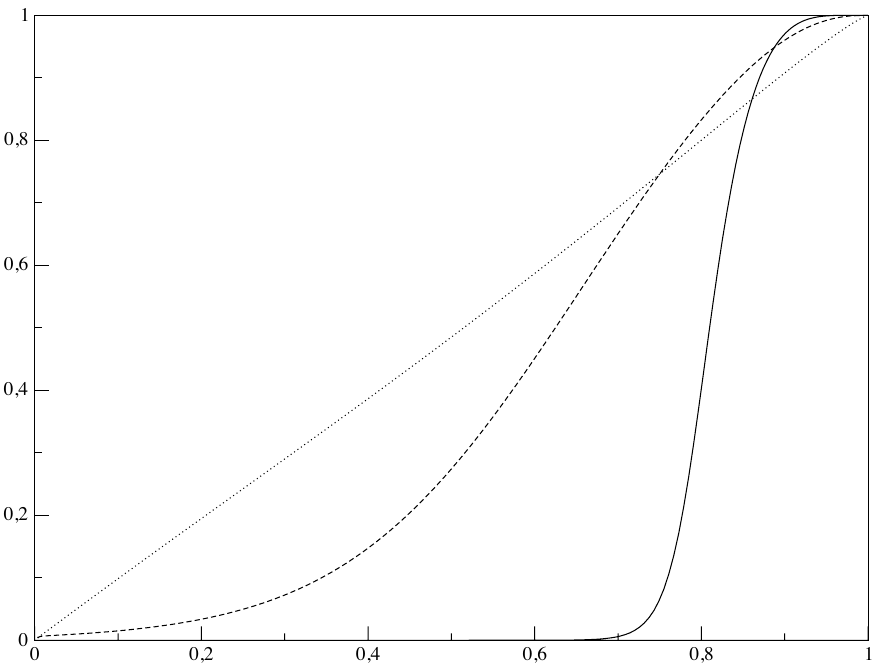}}
  \put(160,40){$\cU(\xi)$}
  \put(160,113){$\cV(\xi)$}
  \put(40,20){$c=0.2$}
  \put(278,50){$c=2.0$}
  \put(349,35){$c=0.5$}
  \put(397,20){$c=0.2$}
  \put(300,120){$1-\cU = \phi_0(\cV)$}
  \end{picture}
  \caption{{\small The sharp profile of the propagation front $(\cU,\cV)$ for $d = 2$, 
  $r = 1$, $c = 0.2$ is represented in the left picture (solid lines), 
  as well as the approximations given by \eqref{U0exp}, \eqref{V0exp} 
  (dotted lines). For the same values of $d,r$, the right picture shows 
  that the effective diffusion coefficient \eqref{Deff2} depends strongly 
  on the speed parameter.} \label{fig4}}
  \end{center}
\end{figure}

Let $\phi_0 : (0,1) \to (0,1)$ be the approximate diffusion coefficient defined
by 
\begin{equation}\label{Deff2}
  1 - \cU_0(\xi) \,=\, \phi_0\bigl(\cV_0(\xi)\bigr)\,, \qquad \xi \in (-\infty,0)\,.
\end{equation}
Equivalently, we have $1 - u_0(y) = \phi_0(\hat v_0(y))$ for all $y \in \R$.  
The function $\phi_0$ can be evaluated using the formulas \eqref{U0exp}, \eqref{V0exp}, 
and is expected to give a good approximation of the effective diffusion coefficient 
\eqref{Deff} when $c \ll 1$. It is straightforward to verify that
\begin{equation}\label{Dasym}
  \phi_0(v) \,\approx\, \beta \biggl(\frac{c+\lambda}{\lambda}\,v\biggr)^{\frac{1+b}{c^2}}
  \quad \hbox{as } v \to 0\,, \qquad \hbox{where}\quad \beta^b \,=\,
  \frac{b}{b+1}\,\frac{1}{\alpha^{1+b}}\,.
\end{equation}
Since $1+b = d + \cO(c)$, this means that the exponent $(1+b)/c^2$ in 
\eqref{Dasym} is very large when $c \ll 1$, so that the function $\phi_0$ is 
extremely flat near the origin, see Figure~\ref{fig4}b. This in turn 
explains why Theorem~\ref{main1} does not conflict with classical results 
establishing the existence of a minimal speed for the traveling waves of 
scalar equations with degenerate diffusion. To see this, consider the 
model equation
\begin{equation}\label{Dm}
  \partial_t V \,=\, D\,\partial_x \bigl(V^m \partial_x V\bigr) + 
  r V (1 - V)\,, 
\end{equation}
where $D,r$ are positive constants and $m \in \N\setminus\{0\}$. It is known
\cite{AtkiReutRidl81} that the minimal speed $c_*$ of the traveling waves
for \eqref{Dm} satisfies
\[
  \frac{2Dr}{(m+1)(m+2)} \,\le\, c_* \,\le\, \frac{2Dr}{m(m+1)}\,, 
\]
so that $c_* \sim \sqrt{2Dr}/m$ as $m \to +\infty$. Although \eqref{Dasym} is
only an asymptotic formula valid for $v \to 0$, this suggests that the minimal
speed for the scalar equation \eqref{GGscalar} where $\phi = \phi_0$ can be
compared to the minimal speed for \eqref{Dm} where $m = (1+b)/c^2$.  The
latter is proportional to $1/m \approx c^2/d$, and should therefore become
smaller than $c$ when $c \ll 1$. Summarizing, given any $c > 0$ (no matter how
small), the effective diffusion coefficient $\phi$ defined by \eqref{Deff} is so
flat near the origin that the minimal speed $c_*$ associated with the scalar
equation \eqref{GGscalar} always satisfies $c_* \le c$.

\section{Conclusions and perspectives}\label{sec4}

We conclude this paper with a list of possible questions
that are, in our opinion, worth investigating in the future.

\subsection{External parameters $d$ and $r$}\label{subsec41}

Tumor growth is dependent on the complex interactive dynamics of many different
factors, including competitive effects (here, described by the parameter $d$)
and growth factors (here, by the parameter $r$). Invasion fronts for the reduced
Gatenby--Gawlinski model \eqref{redGGsys} have been explored in detail,
providing a complete existence result for all positive values of $d$ and
$r$. The critical threshold $d=1$ separates two different scenarios
(heterogeneous versus homogeneous invasion) with a bifurcation appearing already
at the level of the equilibria. In contrast, varying $r$ does not lead to
qualitatively different behaviors. These parameters are both relevant from an
``oncological'' point of view, as they describe two distinct properties of the
system: increasing $d$ enhances the competitivity of the cancerous cells against
the healthy tissue, while $r$ describes the reproduction activity of the tumor
cells alone. In principle, one should be able to fix appropriate values for these
parameters by comparing the shape of the propagation fronts of \eqref{redGGsys}
with experimental data or predictions from more complete models.

\subsection{Stability and minimal speed}\label{subsec42}

While our results prove the existence of propagation fronts for any positive
value of the speed parameter $c$, this does not mean that there is no 
minimal speed for system~\eqref{redGGsys}. First, we do not have any information
so far on the stability of the fronts constructed in Theorems~\ref{main1} 
and \ref{main2}. In fact, there are even mathematical issues concerning the 
Cauchy problem itself, see Section~\ref{subsec43} below. So, it may well be
that the propagation fronts of system~\eqref{redGGsys} are unstable in some
parameter regimes, for instance when $c > 0$ is sufficiently small. Also, 
we do not know which propagation front is selected from Heaviside-type
initial data, and it is therefore conceivable that a minimal speed arises 
in that context too. This last equation is of course much more difficult
than in the scalar case, as system~\eqref{redGGsys} has no maximum principle. 

\subsection{Well-posedness of the Cauchy problem}\label{subsec43}

Due to the presence of degeneracy in the second equation of \eqref{redGGsys},
establishing the well-posedness of the Cauchy problem on the real line $\R$
is a delicate issue. The crucial question is of course whether the threshold
value $\bar U$ such that $f(\bar U)=0$ (in our case, $\bar U=1$) is reached
somewhere.  In that case, heuristic arguments suggest that the problem is not
well-posed in the classical sense, so that some appropriate weak formulation has
to be used. It happens that our reduction from \eqref{GGsys} to
\eqref{redGGsys}, which eliminates the intermediate agent $W$, drastically
increases the stiffness of the diffusion degeneracy. In particular, if $V=0$ and
$U\equiv 1$ in some region, i.e. tumor cells are absent and healthy tissue is at
carrying capacity, propagation is expected to be completely blocked, thus
preventing any invasion mechanism. To our knowledge, well-posedness in some appropriate 
weak framework has not been explored yet, but partial results can be found in 
\cite{BarrNurn02, BarrDeck12}. Of course if the critical value $\bar U = 1$ 
is never reached, existence of a unique classical solution is expected, and 
can be proved by standard techniques. 

On the other hand, an extended version of the Gatenby-Gawlinski model (see
\cite{McGiEtAl14}) has been analyzed in \cite{TaoTell16} for the case of a
multi-dimensional bounded domain with smooth boundary and zero-flux boundary
conditions. In view of the correspondence \eqref{redGGgenGG}, the assumptions in
\cite{TaoTell16} reduce, in the case of \eqref{redGGsys}, to the single hypothesis
$d<1$, which corresponds to heterogeneous invasion. Under such conditions, it
can be proved that, for any positive time $t$, the component $U$ is bounded away
from the critical value $1$, so that the problem possesses a unique classical
solution for initial data $U_0\in(0,1)$ and $V_0>0$. Incidentally, let us remark
that the assumption that $V_0 > 0$ corresponds to the initial presence of tumor
spread everywhere in the healthy tissue, which is clearly questionable from a
biological perspective.  A different approach has been proposed in
\cite{MarkMeraSuru13}, where the authors show local and global existence
invoking an iterative strategy. This approach imposes no restriction on the 
values of the coupling parameter $d$, but it is crucial to assume that 
$U_0\leq \theta$ for some $\theta < 1$. Summarizing, the fundamental question of 
well-posedness for system \eqref{redGGsys} remains currently unsolved in its 
full generality. 

Ideally, the target is to come back to the complete Gatenby--Gawlinski model
\eqref{GGsys}.  Actually, the results quoted above
\cite{MarkMeraSuru13,TaoTell16} do apply to system \eqref{GGsys}. The mediation of
the acid variable --satisfying a linear parabolic equation with dissipation and
external forcing-- increases the possibility of recovering some sort of
classical framework, even if we cannot quote any result of this nature. 
Some weak formulation may also be needed to prove well-posedness in the 
sense of Hadamard --i.e. existence, uniqueness and continuous dependence--, 
but we are not aware of any complete result in that direction either. 

\subsection{Propagating fronts for the complete model}\label{subsec44}

A comprehensive study of existence of traveling waves for the original
Gatenby--Gawlinski model \eqref{GGsys} is currently not available in the
literature. Some partial results, based on singular perturbation theory,
have been presented in \cite{DaviEtAl18}. In any case, there is a clear
computational evidence of existence of such fronts, see
\cite{McGiEtAl14,MoscSime19}. In addition, numerical simulations for
\eqref{GGsys} indicate that the coupling with the acid equation is crucial 
for the existence of a strictly positive minimal speed, which should 
correspond to a sharp front in the regime $d>1$. As a final remark, let us 
observe that many properties used in the present work --for example, the 
monotonicity in Lemma~\ref{lem:monotone}-- are specific to the reduced model 
\eqref{redGGsys} and cannot be easily generalized to the original 
Gatenby--Gawlinski system \eqref{GGsys}.

%%%%%%%%%%%%%%%%%%%%%%%%%%%%%%%%%%%%%%%%%%%%%

{\small

}


\begin{thebibliography}{99}
%\setlength{\itemsep}{-0.5mm}

\bibitem{AlfaEtAl14} K.O. Alfarouk, D. Verduzco, C. Rauch,
  A.K. Muddathir, A.H.H. Bashir, G.O. Elhassen, M.E. Ibrahim,
  J.D. Polo Orozco, R.A. Cardone, S.J. Reshkin and S. Harguindey, {\em
    Glycolysis, tumor metabolism, cancer growth and dissemination. A
    new pH-based etiopathogenic perspective and therapeutic approach
    to an old cancer question}, Oncoscience {\bf 12} (2014) 777--802.
%%%
\bibitem{ArauFassSalv18} A.L.A. de Araujo, A.C. Fassoni, L.F. Salvino,
{\em An analysis of a mathematical model describing acid-mediated tumor invasion},
Math. Meth. Appl. Sci. {\bf 42} (2019) 6686--6705.
%%%
\bibitem{Aron80} D.G. Aronson, 
{\em Density dependent interaction systems},
in ``Dynamics and modelling of reactive systems'' (Ed. W.H. Steward et al),
Academic Press, NewYork (1980) 1161--1176
%%%
\bibitem{AtkiReutRidl81} C. Atkinson, G.E.H. Reuter, J. Ridler-Rowe,
{\em Traveling wave solution for some nonlinear diffusion equations},
SIAM J. Math. Anal. {\bf 12}:6 (1981) 880--892.
%%%
\bibitem{BarrNurn02} J.W. Barrett, R. N\"urnberg, 
{\em Finite-element approximation of a nonlinear degenerate
parabolic system describing bacterial pattern formation},
Interfaces Free Boundaries {\bf 4} (2002) 277--307.
%%%
\bibitem{BarrDeck12} J.W. Barrett, K. Deckelinck, 
{\em Existence and approximation of nonlinear degenerate parabolic
system modelling acid-mediated tumour invasion}, Interfaces Free
Bound. {\bf 14} (2012) 343--363.
%%%
\bibitem{BenjCoheLevi00} E. Ben-Jacob, I. Cohen, H. Levine,
{\it Cooperative self-organization of microorganisms},
Adv. Physics {\bf 49} (2000) no.4, 395--554.
%%%
\bibitem{BengDepa18} R.D. Benguria, M.C. Depassier, 
{\it Variational characterization of the speed of reaction-diffusion
fronts for gradient dependent diffusion} Annales Henri Poincare
{\bf 19} (2018) no.9, 2717--2726.
%%%
\bibitem{CLW} S.-N. Chow, C. Li, D. Wang, 
{\em Normal forms and bifurcation of planar vector fields}, 
Cambridge University Press, 1994. 
%%%
\bibitem{CL} E. A. Coddington, N. Levinson, 
{\em Theory of ordinary differential equations}, 
McGraw-Hill, TMH edition, 1972.  
%%%
\bibitem{DaviEtAl18} P.N. Davis, P. van Heijster, R. Marangell, M.R. Rodrigo, 
{\em Traveling wave solutions in a model for tumor invasion with the acid-mediation hypothesis},
preprint {\tt arXiv:1807.10431}.
%%%
\bibitem{DrabTaka20} P. Dr\'abek, P. Tak\'ac,
{\em Travelling waves in the Fisher--KPP equation with nonlinear degenerate or singular diffusion}
Appl. Math. Opt. 2020, in press.
%%%
\bibitem{Engl85} H. Engler,
{\em Relations between travelling wave solutions  of quasilinear parabolic equations},
Proc. Amer. Math. Soc. {\bf 93} (1985), 297--302. 
%%%
\bibitem{FasaHerrRodr09} A. Fasano, M.A. Herrero, M.R. Rodrigo,
{\em Slow and fast invasion waves in a model of acid-mediated tumour growth},
Math. Biosci. {\bf 220} (2009) 45--56.
%%%
\bibitem{FassYang17} A.C. Fassoni, H.M. Yang,
{\em An ecological resilience perspective on cancer: Insights from a toy model},
Ecol. Complex. {\bf 30} (2017), 34--46.
%%%
\bibitem{FengZhou07} P. Feng, Z. Zhou, 
{\em Finite traveling wave solutions in a degenerate cross-diffusion model for
bacterial colony}, Commun. Pur. Appl. Anal. {\bf 6} (2007) no.4, 1145--1165.
%%%
\bibitem{Fen79} N. Fenichel, 
{\em Geometric singular perturbation theory for ordinary differential equations}, 
J. Differential Equations {\bf 31} (1979), 53--98. 
%%%
\bibitem{Fish37} R.A. Fisher,
{\it The wave of advance of advantageous genes},
Ann. Eugenics {\bf 7} (1937) 353?-369.
%%%
\bibitem{GateGawl96} R.A. Gatenby, E.T. Gawlinski, 
{\em A reaction-diffusion model of cancer invasion}, 
Cancer Res. {\bf 56} (1996), 5745-5753.
%%%
\bibitem{GildKers05} B.H. Gilding, R. Kersner,
{\it A Fisher/KPP-type equation with density-dependent diffusion and convection: 
traveling-wave solutions}, J. Phys. A: Math. Gen. {\bf 38} (2005) 3367--3379. 
%%%
\bibitem{GH} J. Guckenheimer, P. Holmes,
{\em Nonlinear oscillations, dynamical  systems, and bifurcations of vector fields},
Applied Mathematical Sciences {\bf 42}, Springer, New York, 1983. 
%%%
\bibitem{HI} M. Haragus, G. Iooss, 
{\em Local bifurcations, center manifolds, and normal forms in infinite-dimensional 
dynamical systems}, Universitext, Springer, 2011. 
%%%
\bibitem{HilhEtAl08} D. Hilhorst, R. Kersner, E. Logak, M. Mimura,
{\em Interface dynamics of the Fisher equation with degenerate diffusion},
J. Differential Equations {\bf 244} (2008) 2870--2889.
%%%
\bibitem{HoldRodr15} A.B. Holder, M.R. Rodrigo,
{\em Model for acid-mediated tumour invasion with chemotherapy intervention II:
Spatially heterogeneous populations},
Math. Biosci. {\bf 270} (2015) 10--29.
%%%
\bibitem{HoldRodrHerr14} A.B. Holder, M.R. Rodrigo, M.A. Herrero,
{\em A model for acid-mediated tumour growth with nonlinear acid production term},
Appl. Math. Comput. {\bf 227} (2014) 176--198.
%%%
\bibitem{KawaEtAl97} K. Kawasaki, A. Mochizuki, M. Matsushita, T. Umeda, N. Shigesada,
{\em Modeling spatio-temporal patterns generated by {\sf Bacillus subtilis}},
J. Theor. Biol. {\bf 188} (1997), 177--185.
%%%
\bibitem{KellSege71} E.F. Keller,L.A. Segel,
{\em Traveling bands of chemotactic bacteria: a theoretical analysis}.
J. Theor. Biol. {\bf 30} (1971), 235--248.
%%%
\bibitem{KolmPetrPisc37} A.N. Kolmogorov, I. Petrovskii, N. Piscounov,
{\it A study of the diffusion equation with increase in the amount of substance, 
and its application to a biological problem}, in V. M. Tikhomirov, editor, 
Selected Works of A. N. Kolmogorov I, pages 248--270. Kluwer 1991. Translated 
by V. M. Volosov from Bull. Moscow Univ., Math. Mech. 1, 1--25, 1937. 
%%%
\bibitem{LewiPetrPott16} M.A. Lewis, S.V. Petrovskii, J.P. Potts,
``The Mathematics behind biological invasions'',
Interdisciplinary Applied Mathematics 44, Springer.
%%%
\bibitem{LeyvPlaz20} J.F. Leyva, R.G. Plaza,
{\it Spectral stability of traveling fronts for reaction diffusion-degenerate 
Fisher--KPP equations}, J. Dynam. Differential Equations {\bf 32} (2020) no. 3, 
1311--1342.
%%%
\bibitem{MalaMarc03} L. Malaguti and C. Marcelli, 
{\em Sharp profiles in degenerate and doubly degenerate Fisher--KPP equations}, 
J. Differential Equations {\bf 195} (2003), 471--496. 
%%%
\bibitem{MalaMarc05} L. Malaguti and C. Marcelli, 
{\em Finite speed of propagation in monostable degenerate reaction-diffusion-convection 
equations}, Adv. Nonlin. Studies {\bf 5} (2005), 223--252. 
%%%
\bibitem{Mans10a} M.B.A. Mansour,
{\em A numerical study of traveling fronts for a reaction-diffusion-advection model},
Acta Appl. Math. {\bf 109} (2010) 939--947.
%%%
\bibitem{Mans17} M.B.A. Mansour,
{\em On traveling wave fronts in a degenerate nonlinear parabolic system modeling 
bacterial pattern formation}, Int. J. Appl. Comput. Math. {\bf 3} (2017), 
suppl. 1, S1289--S1297.
%%%
\bibitem{Mans10b} M.B.A. Mansour,
{\em Travelling wave solutions for doubly degenerate reaction-diffusion equations},
ANZIAM J. {\bf 52} (2010) 101--109.
%%%
\bibitem{MarkMeraSuru13} C. M\"arkl, G. Meral, C. Surulescu,
{\em Mathematical analysis and numerical simulations for a system modeling acid-mediated tumor cell invasion},
Int. J. Anal. ID:878051 (2013) 1--15.
%%%
\bibitem{MascMoscSime21} C. Mascia, P. Moschetta, C. Simeoni,
{\em Numerical Investigation of some reductions for the Gatenby--Gawlinski model}, %{\tt arXiv:2103.02657} {\tt [math.NA]}
\url{https://arxiv.org/pdf/2103.02657.pdf}
%%%
\bibitem{MeyrRadeSier14} M. Meyries, J.D.M. Rademacher, E. Siero, 
{\it Quasi-linear parabolic reaction-diffusion systems: a user's guide to well-posedness,
spectra, and stability of travelling waves},
SIAM J. Appl. Dyn. Syst. {\bf 13} (2014) no. 1, 249--275.
%%%
\bibitem{MartEtAl10} N.K. Martin, E.A. Gaffney, R.A. Gatenby, P.K. Maini,
{\em Tumour-stromal interactions in acid-mediated invasion: A mathematical model},
J. Theor. Biol. {\bf 267} (2010) 461--470.
%%%
\bibitem{McGiEtAl14} J.B. McGillen, E.A. Gaffney, N.K. Martin, P.K. Maini,
{\it A general reaction-diffusion model of acidity in cancer invasion},
J. Math. Biol. {\bf 68} (2014) 1199--1224.
%%%
\bibitem{MoscSime19}
P. Moschetta and C. Simeoni,
{\em Numerical investigation of the Gatenby-Gawlinski model for acid-mediated tumour invasion},
Rend. Mat. Appl. {\bf 40} (2019) 3--4, 258--287.
%%%
\bibitem{Newm80} W. I. Newman,
{\it Some exact solutions to a non-linear diffusion problem in population genetics 
and combustion}, J. Theor. Biol. {\bf 85} (1980) 325--334.
%%% 
\bibitem{SancMain94b}  F. S\'anchez-Gardu\~no, P.K. Maini, 
{\em Existence and uniqueness of a sharp travelling wave in degenerate non-linear diffusion Fisher--KPP equations},
J. Math. Biology {\bf 33} (1994) 163--192. 
%%%
\bibitem{SancMain97} F. S\'anchez-Gardu\~no, P.K. Maini, 
{\em Travelling wave phenomena in non-linear diffusion degenerate Nagumo equations}, 
J. Math. Biology {\bf 35} (1997) 713--728.
%%%
\bibitem{SancMain95} F. S\'anchez-Gardu\~no, P.K. Maini, 
{\em Travelling wave phenomena in some degenerate reaction-diffusion equations}, 
J. Differential Equations {\bf 117} (1995) 281--319. 
%%%
\bibitem{SatnMainGardArmi01} R.A. Satnoianu, P.K. Maini, F. S\'anchez-Gardu\~no, 
J.P. Armitage, {\em Traveling waves in a nonlinear degenerate diffusion model for 
bacterial pattern formation}, Discrete Cont. Dynamical System B {\bf 1}, no. 3 
(2001), 339--362.
%%%
\bibitem{Sher10} J.A. Sherratt,
{\it On the form of smooth-front travelling waves in a reaction-diffusion equation 
with degenerate nonlinear diffusion},
Math. Model. Nat. Phenom. {\bf 5} (2010) no. 5, 64--79.
%%%
\bibitem{ShigKawaTera79} N. Shigesada, K. Kawasaki, E. Teramoto,
{\it Spatial segregation of interacting species},
J. Theor. Biol. {\bf 79} (1979) 83--99
%%%
\bibitem{StinSuruMera15} C. Stinner, C. Surulescu, G. Meral,
{\em A multiscale model for pH-tactic invasion with time-varying carrying capacities},
IMA J. Appl. Math. {\bf 80} (2015), 1300--1321.
%%%
\bibitem{TaoTell16} Y. Tao, J.I. Tello,
{\em Nonlinear stability of a heterogeneous state in a PDE-ODE model for acid-mediated tumor invasion},
Math. Biosci. Eng. {\bf 13}:1 (2016) 193--207.
%%%
\bibitem{WarbWindNege27} O. Warburg, F. Wind, E. Negelein,
{\em The metabolism of tumors in the body},
J Gen. Phyisiol. {\bf 8} (1927) no.6, 519--530.
%%%
\end{thebibliography}
\end{document}